\theoremstyle{plain}
\newtheorem{theorem}{Theorem}[section]
\newtheorem{corollary}[theorem]{Corollary}
\newtheorem{lemma}[theorem]{Lemma}
\newtheorem{proposition}[theorem]{Proposition}
\theoremstyle{remark}
\newtheorem{remark}[theorem]{Remark}
\def \E{I\!\!E}
\def \P{I\!\!P}
\def \Z{\mathbb Z}
\newcommand{\indiq}{{{\mathbf 1}}}
\def\renewaltime{R}
\newcommand{\R}{\mathbb {R}}
\newcommand{\N}{\mathbb {N}}
\DeclareMathOperator{\var}{\mathop{Var}}
\DeclareMathOperator{\cov}{\mathop{Cov}}
\DeclareMathOperator{\diag}{\mathop{diag}}
\DeclareMathOperator{\d0}{\mathop{d_0}}
\renewcommand{\vec}{\operatorname{vec}}
\newcommand{\modif}[1]{{#1}}
\begin{document}

\begin{frontmatter}
\title{Community detection for binary graphical models in high dimension} 
\runtitle{Community detection for binary graphical models}

\begin{aug}
%%%%%%%%%%%%%%%%%%%%%%%%%%%%%%%%%%%%%%%%%%%%%%%
%% Only one address is permitted per author. %%
%% Only division, organization and e-mail is %%
%% included in the address.                  %%
%% Additional information can be included in %%
%% the Acknowledgments section if necessary. %%
%% ORCID can be inserted by command:         %%
%% \orcid{0000-0000-0000-0000}               %%
%%%%%%%%%%%%%%%%%%%%%%%%%%%%%%%%%%%%%%%%%%%%%%%
\author[A]{\fnms{Julien} \snm{Chevallier}\ead[label=e1]{julien.chevallier1@univ-grenoble-alpes.fr}\orcid{0000-0002-0736-8487}},
\author[B]{\fnms{Guilherme} \snm{Ost}\ead[label=e3]{guilhermeost@im.ufrj.br}\orcid{0000-0003-0887-9390}}
%%%%%%%%%%%%%%%%%%%%%%%%%%%%%%%%%%%%%%%%%%%%%%
%% Addresses                                %%
%%%%%%%%%%%%%%%%%%%%%%%%%%%%%%%%%%%%%%%%%%%%%%
\address[A]{Univ. Grenoble Alpes, CNRS, Inria, Grenoble INP\footnote{Institute of Engineering Univ. Grenoble Alpes}, LJK, 38000 Grenoble, France,
\printead{e1}
}
\address[B]{Institute of Mathematics, Federal University of Rio de Janeiro,
\printead{e3}}

\end{aug}

\begin{abstract}
Let $N$ components be partitioned into two communities, denoted ${\cal P}_+$ and ${\cal P}_-$, possibly of different sizes. Assume that they are connected via a directed and weighted Erd\"os-R\'enyi (DWER) random graph  with unknown parameter $ p \in  (0, 1).$ The weights assigned to the existing connections are of mean-field-type, scaling as $N^{-1}$. At each time \modif{step}, we observe the state of each component: either it sends some signal to its successors (in the directed graph) or remains silent otherwise. 
In this paper, we show that it is possible to find the communities ${\cal P}_+$ and ${\cal P}_-$ based only on the activity of the $N$ components observed over $T$ time units. More specifically, we propose 
\modif{
two simple methods, an aggregated method and a spectral method, 
whose {\it misclassification rates} vanish as long as $T \gg N$ (up to log terms).
This condition is proved to be near-optimal in the minimax sense. Moreover, under the stronger condition $T \gg N^2$ (up to log terms), the aggregated method is shown to achieve {\it exact recovery} with probability tending to $1$. 
}
Interestingly, these simple
\modif{methods} do not require any prior knowledge of the other model parameters (e.g. the edge probability $p$).
The key step in our analysis is to derive an asymptotic approximation of the
1-lagged
covariance matrix associated to the states of the $N$ components, as $N$ diverges. This asymptotic approximation relies on the study of the behavior of the solutions of a \modif{Stein-type} matrix equation satisfied by the simultaneous (0-lagged) covariance matrix associated to the states of the components. 
This study is challenging, especially because the simultaneous covariance matrix is random since it depends on the underlying DWER random graph.
\end{abstract}

\begin{keyword}[class=MSC]
\kwd[Primary ]{62M05}
\kwd[; secondary ]{60J10}
\kwd{60K35}
\kwd{62F12}
\end{keyword}

\begin{keyword}
\kwd{Community detection}
\kwd{exact recovery}
\kwd{misclassification rate}
\kwd{graphical models}
\end{keyword}

\end{frontmatter}

\section{Introduction}
In this paper, we consider a system of $N$ interacting $\{0,1\}$-valued chains $X=\{X_{i,t},t\in\Z, 1\leq i\leq N\}$ that are coupled via a random matrix $\theta=(\theta_{ij})_{1\leq i,j\leq N}$ of i.i.d entries distributed as $\operatorname{Ber}(p)$ with ${\modif{0<} p}\leq 1$. Let $X_{t}=(X_{1,t},\ldots, X_{N,t})$ denote the configuration of the system $X$ at time $t$. The dynamics of the model is as follows. Conditionally on $\theta$, the system $X$ evolves as a stationary Markov chain on the 
state space $\{0,1\}^N$ in which the conditional distribution of $X_t$ given that $X_{t-1}=x$ is that of $N$ independent $\text{Ber}(p_{\theta,i}(x))$ random variables, $i=1,\ldots,N$, where
\begin{equation}
\label{def:transition_prob_2}
p_{\theta,i}(x)= \mu +(1-\lambda)\left(\frac{1}{N} \sum_{j\in\mathcal{P}_+} \theta_{ij}x_{j}+\frac{1}{N}\sum_{j\in\mathcal{P}_-} \theta_{ij}(1-x_{j})\right), \ x=(x_1,\ldots,x_N).
\end{equation}
The sets $\mathcal{P}_+$ and $\mathcal{P}_-$ form a partition of $[N]:=\{1,\ldots, N\}$ and the values $0 <  \lambda < 1 $ and $0 \modif{<} \mu \leq \lambda$ are fixed.
Neither the sets $\mathcal{P}_+$ and $\mathcal{P}_-$ nor the values $\mu$ and $\lambda$ depend on the random environment $\theta$. 

In this model, the transition probabilities $p_{i,\theta}(x)$ defined in \eqref{def:transition_prob_2} are increasing with respect to $x_j$ for $j\in \mathcal{P}_+$ and decreasing for $j\in \mathcal{P}_-$, so that the components 
$j\in \mathcal{P}_+$ have an excitatory role, whereas those belonging to $\mathcal{P}_-$ have an inhibitory role. In that respect, the sets $\mathcal{P}_+$ and $\mathcal{P}_-$ partitioning the set of components $[N]$ of the system $X$ introduce a {\it community structure} into the model: components in the same partition
have the same role.
Hereafter, for $a\in\{-,+\}$, we denote $r^N_a=|\mathcal{P}_a|/N$ the fraction of components of the system that belong to the community ${\cal P}_a$,
and assume that there are values $0<r_+,r_-<1$ satisfying $r_++r_-=1$ such that 
$|r^N_+-r_+|\vee |r^N_--r_-|\leq KN^{-1}$ for some constant $K$. In particular, we do not assume that the communities are balanced, i.e., the choice $r_+\neq r_-$ is allowed.

\paragraph*{Problem formulation} 
The goal of this paper is to address the following {\it community detection problem}.
By observing a sample $X_{1},\ldots, X_{T+1}$ of the system $X$, can we find the communities $\mathcal{P}_+$ and $\mathcal{P}_-$?
We do not assume any prior knowledge of $\mu, \lambda, p, r_+, r_-$ and $\theta$.

\subsection{Application motivation}

Part of our motivation \modif{to investigate this community detection problem} comes from Neuroscience. Nowadays, the simultaneous activity of large neuronal networks can be routinely recorded and understanding which structural properties of these networks can be inferred from the recordings is an important question. 
In several cases, neuronal recordings have been modeled by discrete-time $\{0,1\}$-valued stochastic chains that share some similarities with our model (see for example \citep{GalEva:13,duarte2019estimating} and references therein). In these cases, the time interval in which the neuronal network has been recorded is binned in small intervals and in each of these intervals, and for each recorded neuron, we assign the symbol 1 if the neuron has fired an action potential in that time interval and the symbol 0 otherwise. 
\modif{In this context, the sets $\mathcal{P}_+$
and $\mathcal{P}_-$ correspond respectively to the excitatory and inhibitory neurons of the network, and the community detection problem formulated above asks whether these two populations can be discriminated using only the recorded activity.}

\subsection{Related literature}

The model considered here was recently introduced in \citep{Chevallier2024inferring}. In their main result, the authors show that it is possible to estimate the connectivity parameter $p$ from the sample $X_{1},\ldots, X_{T}$ of the system $X$ with rate $N^{-1/2}+N^{1/2}/T + (\log(T)/T)^{1/2}$, \modif{provided that} the asymptotic fraction of components in each community (the values $r_+$ and $r_-$) is known a priori. As a consequence, the proposed estimator is shown to be consistent provided that $N^{1/2}/T\to 0$.
In this paper, we complement these results by showing that one can also find the communities defined by the sets $\cal{P}_+$ and $\cal{P}_-$.
More specifically, we propose 
\modif{
two simple methods, an aggregated method and a spectral method, for which we establish theoretical guarantees on exact recovery and misclassification rate control.
}
Interestingly, these simple \modif{methods}
do not require the knowledge of the asymptotic fraction of components that belong to each community.

Community detection problems have been extensively investigated over the past decades. A large part of the known results were established for the stochastic block model (SBM) - also known as planted partition model -  \citep{Emmanuel2016Exact,Elchanan2015Reconstruction,Mossel2016Belief} and its generalizations such as the degree-corrected SBM \citep{Zhao2012Consistency,2015LeiConsistency} and  mixed-membership SBM \citep{Airoldi2008Mixed}. We refer the reader to \citep{Abbe2018Community} for a recent review of the classical results in this framework.  
The problem of detecting the communities in such stochastic block models is different from ours for at least two reasons. First, for such models, one has access to the adjacency matrix with a block structure in  terms of the edge probabilities and uses a function of this matrix to find the communities. 
In our framework, the adjacency matrix counterpart is the random matrix $A^N$ defined in \eqref{def:signed_random_enviroment}. This matrix is a normalized and signed version of the random matrix $\theta$ in which all columns in $\mathcal{P}_-$ have a negative sign. Since the matrix $A^N$ depends on the matrix $\theta$ that is not observed, one cannot use the matrix $A^N$ in our detection problem. Instead, we can only use the information present in the sample $X_1,\ldots, X_{T+1}$ which only gives us a skewed view of $A^N$ (this is better justified in \eqref{eq:theorem:2:1:informal}).
Second, given the community assignments, there is no interaction between the components of those models: they behave independently of each other.
Here, on the contrary, the components of the model do interact and these interactions are crucial for finding the communities. 

More recently, the problem of community detection has been also addressed in the context of graphical models \citep{Berthet2019Exact} and G-latent models \citep{Bunea2020Model}. 
In both papers, the observations are assumed to be i.i.d. copies of some $N$-dimensional random vector $Z$. In \citep{Berthet2019Exact}, the authors suppose that the distribution of $Z$ is that of an Ising model without external field in which the interaction strength is $\beta N^{-1}$ between sites of the same block and $\alpha N^{-1}$ otherwise, where the real values $\alpha$ and $\beta$ satisfy $\beta>\alpha$. In particular, the block structure in this model is imposed directly on the interactions, similar to our model.   
In the framework of G-latent models considered in \citep{Bunea2020Model}, the community structure is imposed directly on the covariance matrix of the random vector $Z$, unlike the model considered in \citep{Berthet2019Exact} and ours. In the present paper, we go beyond the assumption that the observations are i.i.d by addressing the community detection problem for dependent variables. 
In this setting, part of the literature has focused on dynamic SBM \citep{Pensy2019Spectral,Lei2024Dynamic,Matias2016Statistical} - a framework fundamentally different from ours - where one observes sequences of adjacency matrices that slowly vary their community structures over time and the goal is to estimate these time varying communities. 
Closer to our setting, we are only aware of the works of \citep{Peixoto2019Network} and \citep{Hoffmann2020Community}. Both papers adopt a Bayesian approach for detecting communities from time series data. Their focus is on discussing the properties
of the proposed methods using synthetic and empirical data, rather than developing theoretical guarantees leading to 
exact recovery \modif{or controlling the misclassification rate}. 
Establishing such results in a similar context is the goal of our paper.

\subsection{Our contributions}

\modif{
In this paper, we first establish structural results on the model. These
are then used to derive theoretical guarantees for two community detection methods.
}

\subsubsection{Structural results}
The key step in our analysis is to derive an asymptotic approximation for
the 1-lagged
covariance matrix $\Sigma^{(1)}$ whose $(i,j)$-entries are defined as
\begin{equation}
\label{def:lag_1_Cov_matrix_signed_random_enviroment}
\Sigma^{(1)}_{ij}=\cov_{\theta}(X_{i,1},X_{j,0}), \ i,j\in [N],
\end{equation}
where $\cov_{\theta}(X_{i,1},X_{j,0})$ denotes the covariance between the random variables $X_{i,1}$ and $X_{j,0}$, conditionally on $\theta$. 
\modif{To describe this approximation, let $A^N$ be the random matrix}
whose $(i,j)$-entries are given by  
\begin{equation}
\label{def:signed_random_enviroment}
A^N_{ij}=
\begin{cases}
N^{-1}\theta_{ij}, \ \text{if} \ j\in{\cal{P}_+} \\
-N^{-1}\theta_{ij}, \ \text{if} \ j\in{\cal{P}_-} 
\end{cases}, \ i,j\in [N].
\end{equation}
\modif{With this notation,} we establish in Theorem \ref{thm:concentration:Sigma} 
that there are explicitly known constants $c_1 >0$ and $c_2$ (both depending on $\lambda, p,$ and $r_+$) such that, 
\modif{with high probability,
\begin{equation}
\label{eq:theorem:2:1:informal}
\max_{i,j\in [N]}\big|\Sigma^{(1)}_{ij}-(c_1A^N_{ij}+c_2N^{-1})\big|\leq K \sqrt{\log(N)/N^3},
\end{equation}
where the constant $K>0$ depends on $\lambda,p$ and $r_+$. 
This means that all entries of the matrix $\Sigma^{(1)}$ are uniformly close to those of the matrix $c_1A^N+c_2N^{-1}1_N1_N^{\top}$ (with high probability), where 
$1_N$ is the $N$-dimensional vector full of ones so that $1_N1_N^{\top}$ is the $N$-by-$N$ matrix full of ones.}
The term $c_2N^{-1}$ can be interpreted as a bias term present in all the entries of the matrix $\Sigma^{(1)}$. 
We note that the sign of this bias term is the same as the sign of $r_+-r_-$. In particular, the bias is $0$ if and only if the communities are balanced, i.e. $r_+=r_-=1/2.$

\modif{Once the entry-wise approximation \eqref{eq:theorem:2:1:informal} is established, we then derive an approximation for $\Sigma^{(1)}$ in the spectral norm, in terms of the matrix}
\modif{
\begin{equation}
\label{def:BarSigma}    
\overline{\Sigma}^{(1)}:=c_1\mathbb{E}[A^N]+c_2N^{-1}1_N1_N^{\top},
\end{equation}
\modif{where $\mathbb{E}[A^N]$ denotes the expectation of $A^N$, taken with respect to the distribution of the random matrix $\theta$.
Specifically}, we establish in Corollary \ref{cor:concentration:Sigma:operator:norm} that, with high probability,
\begin{equation}
\label{eq:cor:spectral:informal}
\vvvert \Sigma^{(1)} - \overline{\Sigma}^{(1)} \vvvert_2 \leq K \sqrt{\log(N)/N},
\end{equation}
for some constant $K>0$ depending on $\lambda,p$ and $r_+$.
}

The matrix $A^N$ defined in \eqref{def:signed_random_enviroment} is a normalized and signed version of $\theta$ in which the entries associated with columns in ${\cal P}_-$ have a negative sign. Hence the columns of $A^N$\modif{, and also of $\mathbb{E}[A^N]$,} carry information about the communities $\mathcal{P}_+$ and $\mathcal{P}_-$. 
The approximations in \eqref{eq:theorem:2:1:informal} \modif{and \eqref{eq:cor:spectral:informal}} suggest that the same is true for the columns of the random matrix $\Sigma^{(1)}$ for large values of $N$. 
\modif{
To formalize this, let $\sigma^{\rm ag}$ and $\overline{\sigma}^{\rm ag}$ be the vectors defined as 
\begin{equation}
\label{def:sigmaN:and:sigma:aggregated}
\sigma^{\rm ag}:=(\Sigma^{(1)})^{\top}1_N \ \text{and} \ \overline{\sigma}^{\rm ag}:=(\overline{\Sigma}^{(1)})^{\top}1_N.
\end{equation}
The superscript ${\rm ag}$ in both vectors $\sigma^{\rm ag}$ and $\overline{\sigma}^{\rm ag}$ stands for {\it aggregated}, since these vectors are obtained by summing over the columns of the matrix  $\Sigma^{(1)}$ and  $\overline{\Sigma}^{(1)}$, respectively.
} 

\modif{
Note that the random vector $(A^N)^{\top}1_N$ is expected to concentrate around its mean $\mathbb{E}[(A^N)^{\top}]1_N$ sufficiently fast (this is formalized in Inequality \eqref{eq:concentration:C:main}). Combining this fact with Equation \eqref{eq:theorem:2:1:informal}, we show in Corollary \ref{cor:concentration:sigma:sup:norm} that the vectors $\sigma^{\rm ag}$ and $\overline{\sigma}^{\rm ag}$ satisfy,
with high probability,
\begin{equation}
\label{infor:cor:concentration:sigma:sup:norm}
\max_{i\in [N]}|\sigma^{\rm ag}_i-\overline{\sigma}^{\rm ag}_i|\leq K\sqrt{\log(N)/N},  
\end{equation}
for some constant $K>0$ depending on $\lambda$, $p$ and $r_+$. 

Using \eqref{def:BarSigma} and the fact that the random variables $\theta_{ij}$'s are i.i.d with distribution
$\text{Ber}(p)$, one can check that
the vector $\overline{\sigma}^{\rm ag}$ defined in \eqref{def:sigmaN:and:sigma:aggregated}
can be rewritten as
\begin{equation}
\label{def:alternative:sigma:ag}
\overline{\sigma}^{\rm ag}=\overline{\sigma}_+ 1_{{\cal P_+}}+\overline{\sigma}_-1_{{\cal P_-}},
\end{equation}
where $1_{{\cal P}_+}$ (resp.  $1_{{\cal P}_-}$ ) is the $N$-dimensional vector whose entries are equal to 1 in ${\cal P}_+$ (resp.  ${\cal P}_-$) and $0$ elsewhere,
and the real values $\overline{\sigma}_+$ and $\overline{\sigma}_-$ are given by 
\begin{equation}
\label{def:sigmaplus:sigmaminus}
\overline{\sigma}_+=c_2 + c_1p\ \text{and} \ \overline{\sigma}_-=c_2-c_1p.    
\end{equation}

Since $\overline{\sigma}_+-\overline{\sigma}_-=2c_1p>0$, it follows from \eqref{def:alternative:sigma:ag} that the vector $\overline{\sigma}^{\rm ag}$ discriminates the communities ${\cal P_+}$ and ${\cal P_-}$, in the sense that the entries in  ${\cal P}_+$ take a larger value than those in $ {\cal P}_-$.
}

\modif{
\subsubsection{Aggregated method}
In view of the upper bound \eqref{infor:cor:concentration:sigma:sup:norm}, it is natural to expect that the vector $\sigma^{\rm ag}$ also discriminates these communities in the limit as $N$ diverges. 
However, the vector $\sigma^{\rm ag}$ cannot be used in practice, since the matrix $\Sigma^{(1)}$ is not observed.
In light of \eqref{def:sigmaN:and:sigma:aggregated}, we propose to estimate $\sigma^{\rm ag}$ by 
\begin{equation}
\label{def:hat:sigma:ag}
\widehat{\sigma}^{\rm ag}=(\widehat{\Sigma}^{(1)})^{\top}1_N,    
\end{equation}
where $\widehat{\Sigma}^{(1)}$, defined below in \eqref{def:empirical:Sigma1}, is the empirical estimate of $\Sigma^{(1)}$.   
Given the vector $\widehat{\sigma}^{\rm ag}$, it is then natural 
to estimate the communities ${\cal P}_+$ and ${\cal P}_-$ by clustering its coordinates. 
We call the resulting method {\it aggregated method}. 

Its theoretical guarantees are studied in Section \ref{sec:aggregated:method}. 
There, we first prove the convergence of $\widehat{\sigma}^{\rm ag}$ towards $\overline{\sigma}^{\rm ag}$, both in the sup-norm (Theorem \ref{thm:conver:hat:sigma:ag:towards:bar:sigma:ag}) and $\ell_2$-norm (Theorem \ref{thm:conver:hat:sigma:ag:towards:bar:sigma:ag:in:l2}).
The control in the sup-norm relies on \eqref{infor:cor:concentration:sigma:sup:norm} and \cite[Theorem 3]{ost2020sparse}, whereas 
the one in $\ell_2$ 
follows from \eqref{eq:cor:spectral:informal} and a sharp estimate for the distance between $\Sigma^{(1)}$ and its empirical estimate $\widehat{\Sigma}^{(1)}$, stated in Theorem \ref{thm:error:between:empirical:Sigma1:and:Sigma1}.
Then, using Theorem \ref{thm:conver:hat:sigma:ag:towards:bar:sigma:ag}, we show in Corollary \ref{cor:extact_recovery} that the aggregated method achieves exact recovery with high probability as long as $T\gg N^2$ (up to log factors).
In Appendix \ref{sec:lower:bound:exact:recovery}, we study the optimality of this condition in a similar statistical setting in which conditionally on $\theta$, the system $X$ starts from fixed initial conditions (and therefore is not stationary). In this setting, in Proposition \ref{prop:lower:bound:exact:recovery}, we derive an information-theoretic lower bound showing that exactly recovering the communities ${\cal P_+}$ and ${\cal P_-}$ is impossible in the minimax sense  if $T/(N \log(N))\to 0$. This result suggests that the condition $T\gg N^2$ may not be optimal for exact recovery.

Finally, as a consequence of Theorem \ref{thm:conver:hat:sigma:ag:towards:bar:sigma:ag:in:l2}, we show in Theorem \ref{thm:missclasification:rate:aggregated:method} that the misclassification rate of the aggregated method vanishes as long as $T\gg N$ (up to log factors). Combined with the information-theoretic lower bound stated in Theorem \ref{thm:minmax:lower:bound}, this result shows that the aggregated method is near-optimal with respect to the misclassification rate.
}

\modif{
\subsubsection{Spectral method}
In this paper, we also consider a {\it spectral method} for finding the communities ${\cal P_+}$ and ${\cal P_-}$.
One key observation behind our spectral method is that the matrix $\overline{\Sigma}^{(1)}$ has rank 1 and that the vector $\overline{\sigma}^{\rm ag}$ (up to renormalization) is a leading right singular vector of it (see Equation \eqref{SVD:bar:sigma1}).
Since singular vectors are only defined up to a $\pm 1$ factor, any leading singular vector of $\widehat{\Sigma}^{(1)}$ can  estimate $\overline{\sigma}^{\rm ag}$ only up to its sign, and clustering the entries of such a vector is prone to label ambiguity as it is well known in the spectral methods literature \citep{Chen2021Spectral}.
Interestingly, we show that in our case this ambiguity can be resolved by using 
the estimator $\widehat{\sigma}^{\rm ag}$ to determine the correct sign, 
yielding an improved spectral method presented in Section \ref{sec:spectral:method}.
}
\modif{
In Theorem \ref{thm:missclasification:rate:spectral:method}, we prove that the  misclassification rate 
of our spectral method vanishes as long as $T\gg N$ (up to log factors). This result together with Theorem \ref{thm:minmax:lower:bound} shows that our spectral method is also near-optimal with respect to the misclassification rate.
To prove Theorem \ref{thm:missclasification:rate:spectral:method}, we combine the perturbation theory of matrices with the approximation results stated in Equation \eqref{eq:cor:spectral:informal} and Theorem \ref{thm:error:between:empirical:Sigma1:and:Sigma1}. 
}

\subsubsection{Proof strategy of our key step}

The asymptotic approximation \eqref{eq:theorem:2:1:informal} is achieved by first showing that the \modif{1-lagged} covariance matrix $\Sigma^{(1)}$ can be computed from the simultaneous (0-lagged) covariance matrix $\Sigma^{(0)}$ whose $(i,j)$-entries are defined as 
\begin{equation}
\Sigma^{(0)}_{ij}=\cov_{\theta}(X_{i,0},X_{j,0}), \ i,j\in [N].
\end{equation}  
Next, we show that the simultaneous covariance matrix $\Sigma^{(0)}$ satisfies some Stein matrix equation (see Proposition \ref{prop:Sigma:0:matrix:equation}). To conclude, we then study the asymptotic behavior of the solutions of the matrix equation satisfied by $\Sigma^{(0)}$ as $N\to\infty$. This analysis is challenging, specially because the matrix $\Sigma^{(0)}$ depends on the random matrix $\theta$ and, hence, it is itself a random matrix.
The analysis involves two main ingredients: identifying an "almost" eigenvector of the Kronecker product $A^N \otimes A^N$, and a control 
\modif{of averages over entries of the matrix $\theta$ that are obtained via Hoeffding's inequality}.

\subsection{Concluding remarks, extensions and future work}
The present paper focuses on the case of two communities ${\cal P}_+$ and ${\cal P}_-$.
Let us stress that 
\modif{
our proposed methods
}
for finding these communities do not rely on the exact values of the two limits $\overline{\sigma}_+$ and $\overline{\sigma}_-$ defined in \eqref{def:sigmaplus:sigmaminus};
\modif{knowing that they are well seperated is sufficient.}

From a theoretical viewpoint, the case of two communities is a natural and widely studied setting in the community detection literature \cite{Abbe2018Community,Emmanuel2016Exact}.
Moreover, as noted by \cite{Berthet2019Exact} in the context of the Ising blockmodel, extending to multiple communities is particularly challenging when the population covariance matrix cannot be directly computed from the model parameters. A similar difficulty arises in our setting: the 1-lagged covariance matrix $\Sigma^{(1)}$ is not directly given in terms of the model parameters, and we must derive a suitable approximation for it (Theorem \ref{thm:concentration:Sigma}). Deriving such an approximation is by itself a difficult task, because the environment encoded in the matrix $\theta$ is random and observations are not i.i.d. in our model. 

In a suitably modified version of our model with $K>2$ communities, we expect the vector $\overline{\sigma}^{\rm ag}$ to have $K$ distinct values, so that it should be possible to extend our analysis of the aggregated method to the case of $K$ communities, though the details remain to be worked out.  
However, we conjecture that the analysis of the spectral method would be more challenging.
Indeed, with $K>2$ communities, we expect the matrix $\overline{\Sigma}^{(1)}$ to have rank $K-1$ and we would need to track $K-1$
singular vectors simultaneously, instead of only one as in our case. Moreover, the resolution of the label permutation ambiguity, which in our case reduces to a simple sign ambiguity, would become significantly more challenging for $K>2$. 

As mentioned earlier, a key property in our analysis is that the community 
structure only affects the columns of $A^N$, and in particular its rows 
share a common distribution. The aggregated method heavily relies on this property 
and should perform poorly if it is violated. 
Although the spectral method also relies on this property, 
we believe that a spectral approach is more robust to generalizations of the present model.  

Furthermore, throughout our analysis we assume that the parameter $p$ does not scale with the number of chains $N$ in the system $X$.
Despite the fact that some probabilistic tools used in our proofs are available for cases where $p$ scales with $N$ (e.g. Hoeffding's inequality), extending our results for such cases is not straightforward for at least two reasons. First, the model has to be suitably modified, since the normalization factor $N^{-1}$ is no longer appropriate. Second, and more importantly, our proof of Theorem \ref{thm:error:between:empirical:Sigma1:and:Sigma1}
relies on Lemma 3.8 of our companion paper \citep{Chevallier2024inferring}, which has been proved only for $p$ constant. This lemma controls the covariances between components of our model and is a key ingredient to control a variance term (this is done in Lemma \ref{lemma:bound:on:sigma}) appearing in the Bernstein-type concentration inequality derived in Lemma \ref{lem:sigmatilde:minus:sigma:in:operator:norm}, which, in turn, is used to prove Theorem \ref{thm:error:between:empirical:Sigma1:and:Sigma1}. 
Extending Lemma 3.8 to regimes where $p$ scales with $N$ would require a substantially different analysis and is beyond the scope of the present paper.
The study of the case $p$ scaling with $N$ is particularly relevant for applications where the underlying graph is sparse and dynamics of the model is sensitive to local graph structures (contrary to the mean-field assumption in \eqref{def:transition_prob_2}).

To conclude, we note that both aggregated and spectral methods are near-optimal in terms of the misclassification rate and that the aggregated method achieves exact recovery. 
Whether the spectral method also achieves exact recovery remains an open question.
A related open question is to understand whether the condition $T\gg N^2$ required by the aggregated method is optimal for exact recovery. Our lower bound in Proposition \ref{prop:lower:bound:exact:recovery} suggests that the sharp threshold may indeed be $T\gg N\log(N)$. Settling this issue is left for future work. 

\subsection{Outline of the paper}
The paper is organized as follows. In Section \ref{sec:results}, we define our model rigorously, introduce most of the mathematical notation used in the text and state our main results. 
\modif{
Section \ref{sec:proofs:structural} is dedicated to the proofs of the structural results of the model, namely Theorem \ref{thm:concentration:Sigma} and Corollaries \ref{cor:concentration:sigma:sup:norm} and \ref{cor:concentration:Sigma:operator:norm}. 
Sections \ref{sec:proof:exact:recovery:aggregated:method} and \ref{sec:proof:missclassification:rate} contain, respectively, the proofs of all results regarding exact recovery and misclassification rate.
In Section \ref{sec:simulation}, we illustrate the performance of our community detection methods through simulations.
Appendix \ref{sec:proof:max:inequality} contains concentration results derived from Hoeffding inequality that are used in our proofs. 
In Appendix \ref{sec:proof:conv:hat:sigma:ag:towards:sigma:ag}, we prove Theorem \ref{thm:conv:hat:sigma:ag:towards:sigma:ag}, on the convergence of $\widehat{\sigma}^{\rm ag}$ towards $\sigma^{\rm ag}$ in the sup-norm (the key step to prove Theorem \ref{thm:conver:hat:sigma:ag:towards:bar:sigma:ag}).  Section \ref{sec:proof:simult:cov:matrix} is devoted to the proof of the asymptotic approximation of the simultaneous covariance matrix, i.e. Proposition \ref{prop:concentration:asymp_approximation_simulta_cov_matrix}. 
In Appendix \ref{sec:proof:of:thm:error:between:empirical:Sigma1:and:Sigma1}, we prove Theorem \ref{thm:error:between:empirical:Sigma1:and:Sigma1}, which plays an important role in the analysis of the misclassification rate of both methods. 
In Appendix \ref{sec:coupling:concentration}, we establish a Bernstein-type concentration inequality for dependent matrices (stated in Lemma \ref{lem:sigmatilde:minus:sigma:in:operator:norm}) that is used in the proof of Theorem \ref{thm:error:between:empirical:Sigma1:and:Sigma1}. 
Appendix \ref{sec:proof:lower:bounds} is devoted to the proofs concerning the minimax lower bounds for the misclassification rate (Theorem \ref{thm:minmax:lower:bound}) and exact recovery (Proposition \ref{prop:lower:bound:exact:recovery}).
In Appendix \ref{app:covariance:lemma}, we recall Lemma 3.8 from our companion paper \cite{Chevallier2024inferring}, which is central in the proofs regarding the misclassification rate. 
Appendix \ref{sec:remarks:on:kmeans:ckustering:1d} contains an auxiliary result used in the proof concerning the exact recovery of the aggregated method. In Appendix \ref{app:notation:tables}, we present two tables summarizing the notation of the main objects in our analysis.  
Finally, Appendix \ref{app:plots} provides additional information about the performance of the aggregated method on simulated data.
}

\section{Model definition, notation and main results}
\label{sec:results}

\subsection{Model definition}
We consider a system of $N$ interacting chains $X=\{X_{i, t} , t \in \Z , 1 \le i \le N\} $ taking values in $ \{0, 1 \} $ denoting the presence or the absence of a signal at a given time. This system evolves in a random environment which is given by the realization of a directed Erd\"os-R\'enyi random graph via the selection of $N^2 $ i.i.d.\! random variables $ \theta_{ij}, 1 \le i, j \le N$, distributed as a Bernoulli of parameter $p$ \modif{with} $0 \modif{<} p\leq 1$.
Conditionally on $\theta$, the evolution of the system $X$ is that of a stationary Markov chain on the hypercube $\{0,1\}^N$ with transition probabilities (which depend on $\theta$) given as follows.
Writing $X_t=(X_{1,t},\ldots, X_{N,t})$ and $x,y\in \{0,1\}^{N}$, we have, for all $t\in\Z$,
\begin{equation}
\label{def:transition_prob_1}
\P_{\theta}(X_{t}=y  | X_{t-1}=x) =\prod_{i=1}^{N}(p_{\theta,i}(x))^{y_i}(1-p_{\theta,i}(x))^{(1-y_i)}, 
\end{equation}
where $p_{\theta,i}(x)=\P_{\theta}( X_{i,t}=1| X_{t-1}=x)$ is defined in \eqref{def:transition_prob_2}.
The existence and uniqueness of a stationary version of the Markov chain having transition probabilities as defined in \eqref{def:transition_prob_1} and  \eqref{def:transition_prob_2} is granted by \citep[Theorem 3.3]{Chevallier2024inferring}. Only the stationary regime is considered throughout this paper. 

\subsection{Notation}
Throughout the paper, we use the letters $t,s$ to denote some time indices whereas the letters $i,j$ denote some spatial values, that is the index of the corresponding component. In agreement with the left-hand side of \eqref{def:transition_prob_1}, we write $\P_{\theta}$ to denote the probability measure under which the environment $\theta$ is kept fixed and the process $X$ is distributed as the unique stationary version of the Markov chain having transition probabilities as defined in \eqref{def:transition_prob_1} and  \eqref{def:transition_prob_2}. The expectation taken with respect to $\P_{\theta}$ is denoted by $\E_{\theta}$. The variance and covariance computed from $\E_{\theta}$ are denoted $\var_{\theta}$ and $\cov_{\theta}$ respectively.   
Moreover, we write $\P$ to denote the probability measure under which the random environment $\theta=(\theta_{ij})_{i,j\in [N]}$ is distributed as a collection of i.i.d. random variables with distribution $\operatorname{Ber}(p)$ and the conditional distribution of the process $X$ given $\theta$ is that of the process $X$ under $\P_{\theta}$, i.e., the following identity holds $\P(X\in \cdot|\theta)=\P_{\theta}(X\in \cdot)$.  Finally, we denote $\E$ the expectation taken with respect to the probability measure $\P$, and $\var$ and $\cov$ the variance and covariance, respectively, computed from $\E$. 
\modif{When we need to emphasize that the partition $({\cal P}_+,{\cal P}_-)=(S,S^c)$, where $S\subset [N]$ with $|S|= \lceil Nr_+ \rceil$, we add an additional subscript to the previous notation. For example, $\P_{S,\theta}$ denotes the probability measure under which the environment $\theta$ is fixed and the process $X$ is distributed as the unique stationary version of the Markov chain having transition probabilities given by  \eqref{def:transition_prob_1} and  \eqref{def:transition_prob_2}, where the communities $({\cal P}_+,{\cal P}_-)=(S,S^c).$ 
The corresponding expectation is denoted by $\E_{S,\theta}$.}

For any $N$-by-$N$ matrix $M$, we shall denote $\vvvert M \vvvert_{p}$, with $1\leq p\leq \infty$, the matrix norm induced by the vector norm $\| \cdot \|_p$, and  $\vvvert M \vvvert_{\max}=\max_{i,j\in [N]}|M(i,j)|$. Recall that, although the matrix norm $\vvvert\cdot \vvvert_{\max}$ is not sub-multiplicative, it satisfies  
\begin{equation}
\label{eq:max:norm:submultiplicative}
    \vvvert AB \vvvert_{\max} \leq \min\left\{ \vvvert A \vvvert_{\infty} \vvvert B \vvvert_{\max}, \vvvert A \vvvert_{\max} \vvvert B \vvvert_{1} \right\},
\end{equation}
for any $N$-by-$N$ matrices $A$ and $B$. We write $I_N$ to denote the $N$-by-$N$ identity matrix.

Hereafter, for any subset $V$ of $[N]$, we write $1_V$ to indicate the $N$-dimensional vector having value $1$ in each coordinate belonging to $V$ and value $0$ in the remaining coordinates. To alleviate the notation, we will simply write $1_N$ instead of $1_{[N]}$, to indicate the vector full of ones. \modif{Along the text, the letter $K$ denotes a constant that is independent of $N$ and $T$ and that may change from line to line. Also, for any real values $a,b$, we let $a\wedge b$ and $a\vee b$ respectively denote the minimum and the maximum between $a$ and $b$. Finally, all the logarithms are in base $e$.}

\subsection{Structural results}

Let us denote
\begin{equation}
\label{def:mean_activity}
m=m(\mu,\lambda,p)=\frac{\mu+(1-\lambda)pr_-}{1-\modif{D}} \ \text{with} \ \modif{D = D(\lambda,p,r_+)=(1-\lambda)p(r_+-r_-)}
\end{equation}
which was proved in \citep[Theorem 2.1]{Chevallier2024inferring} to be the limit, in probability, of the spatio-temporal mean $(TN)^{-1}\sum_{i=1}^N\sum_{t=1}^T X_{i,t}$ as $T \wedge N \to\infty$. 
\modif{
Furthermore, denote
\begin{equation}
    \label{eq:def:c1:c2}
    \begin{cases}
        c_1 = (1-\lambda) m (1-m),\\
        c_2 = \frac{(1-\lambda)^2 p^2 (r_+ - r_-)}{1-D^2} c_1.
    \end{cases}
\end{equation}
For later use, let us observe that $c_1>0$ since $p>0$ and $\lambda<1$ and that $c_2$ is non-null as long as the network is unbalanced (i.e. $r_+\neq r_-$).
}

Recall the random matrices $\Sigma^{(1)}$ and $A^N$ defined in \eqref{def:lag_1_Cov_matrix_signed_random_enviroment} and \eqref{def:signed_random_enviroment} respectively. The matrix $\Sigma^{(1)}$ is the 
1-lagged
covariance matrix and $A^N$ is a normalized and signed version of random matrix $\theta$. 
Remark that $\Sigma^{(1)}$ also depends on $N$ but we chose to not highlight this dependence in the notation. Moreover, note that this matrix is not symmetric. 
\modif{
In our first structural result, namely Theorem \ref{thm:concentration:Sigma} below, we show that with high probability the matrix $\Sigma^{(1)}$ 
is close to the sum of two matrices: the first one is the matrix $c_1A^N$ and the second one is the matrix $c_2N^{-1}1_N1_N^{\top}$ which is interpreted a bias term.
The proof of Theorem \ref{thm:concentration:Sigma} is given in Section \ref{sec:proofs:structural}.

\begin{theorem}
    \label{thm:concentration:Sigma}
    There exists a positive constant $K>0$ depending on $\lambda, p$ and $r_+$ such that for all $\delta\in (0,1]$ and $N\geq 2$,
    $$
    \P\left(\left\vvvert \Sigma^{(1)} - \left( c_1 A^N + \frac{c_2}{N} 1_N 1_N^{\top} \right) \right\vvvert_{\max} \geq K\sqrt{\frac{\log(N/\delta)}{N^3}} \right) \leq \delta,
    $$
    where the matrices $\Sigma^{(1)}$ and $A^N$ are defined in \eqref{def:lag_1_Cov_matrix_signed_random_enviroment} and \eqref{def:signed_random_enviroment} respectively, and the real values $c_1$ and $c_2$ are defined in \eqref{eq:def:c1:c2}.
\end{theorem}
}%
\begin{remark}
\label{rem:constants}
    The constant $K$ appearing in Theorem \ref{thm:concentration:Sigma} remains bounded for fixed values of $\lambda$ (i.e. as functions of $p$ and $r_+$ only). However, this constant diverges when $\lambda \to 0$ (the ergodicity of dynamics breaks) or $\lambda \to 1$ (the dependence between the components of the model becomes very weak). 
    In such regimes, the result stated in Theorem \ref{thm:concentration:Sigma} is vacuous.
    The same remark applies to several other results, although we will not mention this explicitly.   
\end{remark}

\modif{
Recall the matrix $\overline{\Sigma}^{(1)}$ defined in \eqref{def:BarSigma}. Our second structural result establishes  that the vectors $\sigma^{\rm ag}=(\Sigma^{(1)})^{\top}1_N$ and $\overline{\sigma}^{\rm ag}=(\overline{\Sigma}^{(1)})^{\top}1_N$ are close in the sup norm, with high probability. 
This result is the basis for the aggregated method presented in Section \ref{sec:aggregated:method}. Its proof, given also in Section \ref{sec:proofs:structural}, follows from Theorem \ref{thm:concentration:Sigma} and concentration properties of the random vector $(A^N)^{\top}1_N$ around its mean $\E[(A^N)^{\top}]1_N$. 

\begin{corollary}
\label{cor:concentration:sigma:sup:norm}
There exists a positive constant $K>0$ depending on $\lambda, p$ and $r_+$ such that for all $\delta\in (0,1]$ and $N\geq 2$,
\[
\P\left(\left\| \sigma^{\rm ag} - \overline{\sigma}^{\rm ag} \right\|_{\infty} \geq K\sqrt{\frac{\log(N/\delta)}{N}} \right) \leq \delta,
\]
where the vectors $\sigma^{\rm ag}$ and $\overline{\sigma}^{\rm ag}$ are defined in \eqref{def:sigmaN:and:sigma:aggregated}.
\end{corollary}

Our final structural result shows  that the matrices $\Sigma^{(1)}$ and  $\overline{\Sigma}^{(1)}$ are close in the operator norm, with high probability. This result will be important in the analysis of the misclassification rate for both aggregated and spectral methods.
Its proof, presented in Section \ref{sec:proofs:structural}, follows from a combination of Theorem \ref{thm:concentration:Sigma} with concentration properties of the random matrix $A^N$ around its expectation $\mathbb{E}[A^N]$.

\begin{corollary}
\label{cor:concentration:Sigma:operator:norm}
There exists a positive constant $K>0$ depending on $\lambda, p$ and $r_+$ such that for all $\delta\in (0,1]$ and $N\geq 2$,
$$
    \P\left(\left\vvvert \Sigma^{(1)} - \overline{\Sigma}^{(1)} \right\vvvert_{2} \geq K\sqrt{\frac{\log(N/\delta)}{N}} \right) \leq \delta,
$$
where the matrices $\Sigma^{(1)}$ and $\overline{\Sigma}^{(1)}$ are defined in \eqref{def:lag_1_Cov_matrix_signed_random_enviroment} and \eqref{def:BarSigma} respectively.
\end{corollary}
}

\subsection{Aggregated method}
\label{sec:aggregated:method}

Recall the vectors $\sigma^{\rm ag}=(\Sigma^{(1)})^{\top}1_N$ and  $\overline{\sigma}^{\rm ag}=(\overline{\Sigma}^{(1)})^{\top}1_N.$ Observing that $\overline{\sigma}^{\rm ag}$ can be rewritten as $\overline{\sigma}_+1_{{\cal P}_+}+\overline{\sigma}_-1_{{\cal P}_-}$ (recall Equation \eqref{def:alternative:sigma:ag})
where $\overline{\sigma}_+-\overline{\sigma}_-=2c_1p>0$, we immediately see that the vector $\overline{\sigma}^{\rm ag}$ discriminates the communities ${\cal P}_+$ and ${\cal P}_-$. By Corollary \ref{cor:concentration:sigma:sup:norm}, we then expect that the communities can also be discriminated by the vector $\sigma^{\rm ag}$, with high probability.
In practice, however, the vector $\sigma^{\rm ag}$ cannot be used to that end because the matrix $\Sigma^{(1)}$ is not known. To circumvent this issue, one can try to find these sets by using the empirical estimate of $\sigma^{\rm ag}$.

Let $X_{1},\ldots, X_{T+1}$ be a sample of a stationary Markov chain with transition probabilities given by \eqref{def:transition_prob_1} and \eqref{def:transition_prob_2}, associated with some realization $\theta$ of the random environment which is not observed. Given the sample, we propose to estimate the vector 
$\sigma^{\rm ag} = (\Sigma^{(1)})^\top 1_N$ by the vector $\widehat{\sigma}^{\rm ag}=(\widehat{\Sigma}^{(1)})^\top 1_N$, where $\widehat{\Sigma}^{(1)}$ denotes the empirical estimate of the matrix $\Sigma^{(1)}$, which is defined as 
\begin{equation}
\label{def:empirical:Sigma1}
\widehat{\Sigma}^{(1)}=\frac{1}{T}\sum_{t=1}^{T}(X_{t+1}-\widehat{m})(X_{t}-\widehat{m})^{\top}, \ \text{with} \ \widehat{m}=\frac{1}{T}\sum_{t=1}^{T}X_{t}.
\end{equation}
In Theorem \ref{thm:conv:hat:sigma:ag:towards:sigma:ag}, stated and proved in Appendix \ref{sec:proof:conv:hat:sigma:ag:towards:sigma:ag}, we show that the estimator $\widehat{\sigma}^{\rm ag}$ 
concentrates around
$\sigma^{\rm ag}$ in the sup-norm, as both $T$ and $N$ diverge provided that $T\gg N^2$ (up to log factors).
Combining this result with Corollary \ref{cor:concentration:sigma:sup:norm}, we can show that the vector $\widehat{\sigma}^{\rm ag}$ also converges to $\overline{\sigma}^{\rm ag}$ in the sup-norm, provided that $T\gg N^2$ (up to log terms). This is the content of the result below.
\begin{theorem}
\label{thm:conver:hat:sigma:ag:towards:bar:sigma:ag}
Let $\widehat{\sigma}^{\rm ag}$ and $\overline{\sigma}^{\rm ag}$ be the vectors defined in \eqref{def:hat:sigma:ag} and \eqref{def:sigmaN:and:sigma:aggregated} respectively. There exist positive constants $K_1,K_2$ and $K_3$ depending on $\lambda$ such that 
$$
\P\left(\|\widehat{\sigma}^{\rm ag}-\overline{\sigma}^{\rm ag}\|_{\infty} \geq K_1\Bigg(\frac{N\log(TN)}{\sqrt{T}}+\sqrt{\frac{\log(N)}{N}}\Bigg)\right) \leq  K_2\Bigg(\frac{1}{T}+\frac{1}{N}\Bigg),
$$
for all pairs $(N,T)$ satisfying $N\geq 2$ and $T\geq K_3\log(N)$.    
\end{theorem}
Theorem \ref{thm:conver:hat:sigma:ag:towards:bar:sigma:ag} is the key ingredient in establishing the exact recovery guarantees of the aggregated method defined below. Its proof is given in Section \ref{sec:proof:exact:recovery:aggregated:method}.  

To control the misclassification rate, we need a control similar to that of Theorem \ref{thm:conver:hat:sigma:ag:towards:bar:sigma:ag} with $\ell_2$-norm replacing the sup-norm. 
A naive approach that combines the crude bound $\| x \|_2 \leq \sqrt{N}\| x \|_\infty$ with Theorem \ref{thm:conver:hat:sigma:ag:towards:bar:sigma:ag} leads to a sub-optimal estimate for the convergence rate of $\widehat{\sigma}^{\rm ag}$ towards $\overline{\sigma}^{\rm ag}$ in the $\ell_2$-norm.  
To achieve optimal convergence rate, we use Corollary \ref{cor:concentration:Sigma:operator:norm} and a sharp estimate 
for the distance between the 1-lagged covariance matrix $\Sigma^{(1)}$ and its empirical estimate $\widehat{\Sigma}^{(1)}$ (Theorem \ref{thm:error:between:empirical:Sigma1:and:Sigma1}). 
\begin{theorem}
\label{thm:conver:hat:sigma:ag:towards:bar:sigma:ag:in:l2}
Let $\widehat{\sigma}^{\rm ag}$ and $\overline{\sigma}^{\rm ag}$ be the vectors defined in \eqref{def:hat:sigma:ag} and \eqref{def:sigmaN:and:sigma:aggregated} respectively. 
Then there exist positive constants $K_1,K_2$ and $K_3$ depending on $\lambda$ such that 
$$
\P\left(\|\widehat{\sigma}^{\rm ag}-\overline{\sigma}^{\rm ag}\|_{2} \geq K_1\Bigg(N\sqrt{\frac{\log(T)\log(N\log(T))}{T}}+\sqrt{\log(N)}\Bigg)\right) \leq  K_2\Bigg(\sqrt{\frac{N}{T}}+\frac{1}{N}\Bigg),
$$
for all pairs $(N,T)$ satisfying $N\geq 2$ and 
\begin{equation}
\label{def:1:over:SNRn}
T\geq (K_3N\log(T)\log(N\log(T)))\vee 2.
\end{equation}    
\end{theorem}
The proof of Theorem \ref{thm:conver:hat:sigma:ag:towards:bar:sigma:ag:in:l2} is given in Section \ref{sec:proof:missclassification:rate}. Note that Condition \eqref{def:1:over:SNRn} requires the time horizon $T$ to be sufficiently large when compared to the number $N$ of interacting chains composing the system. 

In light of the properties of the vector $\overline{\sigma}^{\rm ag}$ and of Theorems \ref{thm:conver:hat:sigma:ag:towards:bar:sigma:ag} and \ref{thm:conver:hat:sigma:ag:towards:bar:sigma:ag:in:l2}, it is natural to estimate the communities ${\cal P}_+$ and ${\cal P}_-$ by clustering the entries of $\widehat{\sigma}^{\rm ag}$. Although our approach extends to several clustering procedures, in what follows we analyze in detail only two of them. 

The first is the {\it $k$-means} clustering with $k=2$, which computes the global minimizer of the $k$-means criterion.
Since the entries of $\widehat{\sigma}^{\rm ag}$ are real numbers, this can be done efficiently (in time linear in $N$) via the dynamic programming algorithm developed in \cite{Wu1991Optimal} (see also \cite{gronlund2018fast} for a review of the existing approaches to one-dimensional $k$-means clustering).
Note that, in general, its output is different from that of Lloyd's algorithm.
We denote by $(\widehat{\cal P}_+^{\rm ag,km},\widehat{\cal P}_-^{\rm ag,km})$ the estimated communities obtained by applying $k$-means clustering to the vector $\widehat{\sigma}^{\rm ag}$.

The second procedure, which we refer to as {\it mean threshold} clustering, estimates the communities ${\cal P_+}$ (resp. ${\cal P_-}$) as the 
set of indices $i\in [N]$ for which $\widehat{\sigma}^{\rm ag}_i$ is above (resp. below) 
the mean value $N^{-1}\sum_{j=1}^N\widehat{\sigma}_j^{\rm ag}$. We denote the resulting 
communities by $(\widehat{\cal P}_+^{\rm ag,mt},\widehat{\cal P}_-^{\rm ag,mt}).$   

Our next result provides the theoretical guarantees for these clustering procedures with respect to exact recovery.
\begin{corollary}
\label{cor:extact_recovery}
Let $\widehat{\sigma}^{\rm ag}$ be the vector defined in \eqref{def:hat:sigma:ag} and denote by $(\widehat{\mathcal{P}}^{\rm ag,\bullet}_+, \widehat{\mathcal{P}}^{\rm ag,\bullet}_-)$ the estimated communities obtained by applying $k$-means or mean threshold clustering to $\widehat{\sigma}^{\rm ag}$. Then there exist an integer $N_0$ and positive constants $K_1$ and $K_2$ depending only on $\lambda, p,r_+$ such that  
$$
\P\left((\widehat{{\cal P}}_{+}^{\rm ag,\bullet},\widehat{{\cal P}}_{-}^{\rm ag,\bullet})\neq ({\cal P_{+}},{\cal P_{-}})\right)\leq K_2\Big(\frac{1}{N}+\frac{1}{T}\Big)
$$
for all $(N,T)$ such that $N\geq N_0$ and 
\begin{equation}
T\geq K_1N^2\log^2(TN).    
\end{equation}
\end{corollary}
\begin{remark}
 As our proof reveals, the conclusion of Corollary \ref{cor:extact_recovery} also holds if the $k$-means clustering is replaced by Lloyd's $k$-means algorithm, initialized with the means $m_1=\min_{i\in [N]}\widehat{\sigma}^{\rm ag}_i$ and $m_2=\max_{i\in [N]}\widehat{\sigma}^{\rm ag}_i$. Indeed, whenever $\widehat{\sigma}^{\rm ag}$ is sufficiently close to $\overline{\sigma}^{\rm ag}$ in the sup-norm, we show that Lloyd's algorithm converges to the
 true partition $({\cal P}_+, {\cal P}_-)$ in one iteration.

The analysis for $k$-means clustering is more involved as it requires to show that the global minimizer of the $k$-means criterion is the true $({\cal P}_+, {\cal P}_-)$,  provided that the vector $\widehat{\sigma}^{\rm ag}$ and $\overline{\sigma}^{\rm ag}$ are close in the sup-norm. This is done in Lemma \ref{lem:structural:properties:kmeans} of Appendix \ref{sec:remarks:on:kmeans:ckustering:1d}.   
\end{remark}
Like Theorem \ref{thm:conver:hat:sigma:ag:towards:bar:sigma:ag}, the proof of Corollary \ref{cor:extact_recovery} is given in Section \ref{sec:proof:exact:recovery:aggregated:method}. It follows from Corollary \ref{cor:extact_recovery} that we can exactly recover the communities ${\cal P}_+$ and ${\cal P}_-$ by clustering the vector $\widehat{\sigma}^{\rm ag}$ via either $k$-means or mean threshold, as long as $T\gg N^2$ (up to log factors).
In Section \ref{sec:simulation}, we perform a numerical study to compare the performance of these two clustering procedures (and also include hierarchical clustering in this study).
We note that the information-theoretic lower bound derived in Proposition \ref{prop:lower:bound:exact:recovery} of Appendix \ref{sec:lower:bound:exact:recovery}  shows, in a slightly different setting, that exactly recovering the communities ${\cal P_+}$ and ${\cal P_-}$ is impossible in the minimax sense if $T/(N \log(N))\to 0$. This suggests that the condition $T\gg N^2$ may not be optimal, and resolving this issue is left for future work.

In what follows, we focus on the performance of the aggregated method with
$k$-means or mean threshold clustering in terms of its misclassification rate. Here, 
the misclassification rate of an estimated partition $(\widehat{{\cal P}}_+,\widehat{{\cal P}}_-)$ is defined by 
\begin{equation}
\label{def:discrepancy:measure}
\operatorname{MR}(\widehat{{\cal P}}_+,\widehat{{\cal P}}_-)=N^{-1}(|\widehat{{\cal P}}_+\cap {\cal P}_-|+|\widehat{{\cal P}}_-\cap {\cal P}_+|).
\end{equation}

\begin{theorem}
\label{thm:missclasification:rate:aggregated:method}
Let $\widehat{\sigma}^{\rm ag}$ be the vector defined in \eqref{def:hat:sigma:ag} and denote by $(\widehat{\mathcal{P}}^{\rm ag,\bullet}_+, \widehat{\mathcal{P}}^{\rm ag,\bullet}_-)$ the estimated communities obtained by applying $k$-means or mean threshold clustering to $\widehat{\sigma}^{\rm ag}$. 
Then there exist an integer $N_0$ and positive constants $K_1$, $K_2$ and $K_3$ depending only on $\lambda, p$ and $r_+$ such that  
\[
\P\Big(\operatorname{MR}(\widehat{\cal P}^{\rm ag, \bullet}_+,\widehat{\cal P}^{\rm ag, \bullet}_-)\geq K_1\Big(\frac{N\log(T)\log(N\log(T))}{T}+\frac{\log(N)}{N}\Big)\Big)\leq K_2\Big(\sqrt{\frac{N}{T}}+\frac{1}{N}\Big),
\]
for all pairs $(T,N)$ for which $N\geq N_0$ and \eqref{def:1:over:SNRn} is satisfied.
\end{theorem}
The proof of Theorem \ref{thm:missclasification:rate:aggregated:method} is provided in Section \ref{sec:proof:missclassification:rate}. The key step in the proof is to obtain an estimate for missclassification rate in terms of $\ell_2$-distance between $\widehat{\sigma}^{\rm ag}$ and $\overline{\sigma}^{\rm ag}$. This is the purpose of Lemma \ref{lem:l2norm:controls:MR}, stated and proved in Section \ref{sec:proof:missclassification:rate}. Given this estimate, we then conclude by Theorem \ref{thm:conver:hat:sigma:ag:towards:bar:sigma:ag:in:l2}.

Finally, let us observe that since condition \eqref{def:1:over:SNRn} matches the condition $T \gg N$ up to log terms, the minimax lower bound stated in Theorem \ref{thm:minmax:lower:bound} of Section \ref{sec:lower:bounds} implies that the estimated partitions $(\widehat{\mathcal{P}}^{\rm ag,\bullet}_+, \widehat{\mathcal{P}}^{\rm ag,\bullet}_-)$ 
are both near-optimal, with respect to the misclassification rate. 
An interesting open question is to understand whether the log factors in condition \eqref{def:1:over:SNRn} can be removed.

\subsection{Spectral method}
\label{sec:spectral:method}

Recall the constants $c_1$ and $c_2$ defined in \eqref{eq:def:c1:c2}.
Using that $\overline{\Sigma}^{(1)}=c_1\E[A^N]+c_2N^{-1}1_N1_N^{\top}$ and that $\E[A^N]=pN^{-1}1_N(1_{{\cal P}_+}-1_{{\cal P}_-})^{\top}$, we can rewrite $\overline{\Sigma}^{(1)}$ as
\begin{equation}
\label{SVD:bar:sigma1}
\overline{\Sigma}^{(1)}= N^{-1} 1_N (\overline{\sigma}^{\rm ag})^{\top},
\end{equation}
where $\overline{\sigma}^{\rm ag}$ is the vector given by \eqref{def:alternative:sigma:ag}. 

Starting from \eqref{SVD:bar:sigma1}, one can verify after simple calculations that 
\begin{equation}
\label{def:sigma:sp}
\overline{\sigma}^{\rm sp}:=\frac{\overline{\sigma}^{\rm ag}}{\|\overline{\sigma}^{\rm ag}\|_2},    
\end{equation}
is a right singular vector of $\overline{\Sigma}^{(1)}$ with corresponding singular value
\begin{equation}\label{def:leading:singular:value:bar:sigma1}
    \sigma_1\big(\overline{\Sigma}^{(1)}\big)=\sqrt{(\overline{\sigma}_+)^2r^N_++(\overline{\sigma}_-)^2r^N_-}.
\end{equation}
Except in trivial cases, $\sigma_1\big(\overline{\Sigma}^{(1)}\big)>0$ so that $\overline{\Sigma}^{(1)}$ has rank 1 and $\sigma_1\big(\overline{\Sigma}^{(1)}\big)$ is its unique non-null singular value.

Next observe that Corollary \ref{cor:concentration:Sigma:operator:norm} and Theorem \ref{thm:error:between:empirical:Sigma1:and:Sigma1} together imply that the matrices $\widehat{\Sigma}^{(1)}$ and   $\overline{\Sigma}^{(1)}$ are close in operator norm with high probability. In view of the above properties of $\overline{\Sigma}^{(1)}$ and of Wedin's $\sin \Theta$ theorem (see for instance Inequality  \eqref{ineq:1:proof:thm:community:detection}), it then follows that, with high probability, any leading right singular vector $\widecheck{v}$ of the matrix $\widehat{\Sigma}^{(1)}$ is close to $\overline{\sigma}^{\rm sp}$, up to sign.
The sign ambiguity can be resolved using $\widehat{\sigma}^{\rm ag}$ as follows. Let $\widecheck{{\cal P}}$ be a preliminary estimated community defined as 
\begin{equation}\label{eq:def:preliminary:partition}
\widecheck{{\cal P}} = \left\{i\in[N]: \widecheck{v}_i \ge N^{-1} \sum_{j=1}^N \widecheck{v}_j\right\}.
\end{equation}
Depending on the ambiguous sign, $\widecheck{{\cal P}}$ is expected to be close to ${\cal P}_+$ or ${\cal P}_-$,   and consequently the average of the coordinates of $\widehat{\sigma}^{\rm ag}$ over $\widecheck{{\cal P}}$ is expected to be close to $\overline{\sigma}_+$ or $\overline{\sigma}_-$. Hence, we propose to estimate the vector $\overline{\sigma}^{\rm sp}$ by the vector
\begin{equation}\label{eq:def:hat:sigma:spectral}
    \widehat{\sigma}^{\rm sp} =
    \begin{cases}
        \widecheck{v} & \text{, if } |\widecheck{{\cal P}}|^{-1}\sum_{i\in \widecheck{{\cal P}}}\widehat{\sigma}_i^{\rm ag} \geq |[N]\setminus \widecheck{{\cal P}}|^{-1}\sum_{i\in [N]\setminus \widecheck{{\cal P}}}\widehat{\sigma}_i^{\rm ag},\\
        -\widecheck{v} & \text{, else}.
    \end{cases}
\end{equation}

\begin{theorem}
\label{thm:estimation:spectral:method}
Let $\widehat{\sigma}^{\rm sp}$ be defined in \eqref{eq:def:hat:sigma:spectral}. 
There exist an integer $N_0$ and positive constants $K_1$, $K_2$ and $K_3$ depending only on $\lambda, p$ and $r_+$ such that
\[
\P\Big( \| \widehat{\sigma}^{\rm sp} - \overline{\sigma}^{\rm sp} \|_2 \geq K_1\Big(\sqrt{\frac{N\log(T)\log(N\log(T))}{T}} + \sqrt{\frac{\log(N)}{N}}\Big)\Big)\leq K_2\Big(\sqrt{\frac{N}{T}}+\frac{1}{N}\Big),
\]
for all pairs $(T,N)$ for which $N\geq N_0$ and \eqref{def:1:over:SNRn} is satisfied.
\end{theorem}

Theorem \ref{thm:estimation:spectral:method} is proved in Section \ref{sec:proof:missclassification:rate}. As anticipated above, its proof follows from the perturbation theory of matrices (Wedin's Sin $\Theta$ theorem), Corollary \ref{cor:concentration:Sigma:operator:norm} and Theorem \ref{thm:error:between:empirical:Sigma1:and:Sigma1}. The most challenging part in the proof is to show that the sign of $\widehat{\sigma}^{\rm sp}$ is correct with high probability. 

Finally, the estimated partition is defined as
\begin{equation}\label{def:partition:via:spectral:method}
    \widehat{{\cal P}}^{\rm sp}_+ = \left\{i\in[N]: \widehat{\sigma}^{\rm sp}_i \ge N^{-1} \sum_{j=1}^N \widehat{\sigma}^{\rm sp}_j\right\}
    \quad \text{and} \quad
    \widehat{{\cal P}}^{\rm sp}_- = [N] \setminus \widehat{{\cal P}}^{\rm sp}_+,
\end{equation}
and its misclassification rate is upper bounded as follows.

\begin{theorem}
\label{thm:missclasification:rate:spectral:method}
Let $(\widehat{\cal P}^{\rm sp}_+,\widehat{\cal P}^{\rm sp}_-)$ be the partition defined in \eqref{def:partition:via:spectral:method}. 
There exist an integer $N_0$ and positive constants $K_1$, $K_2$ and $K_3$ depending only on $\lambda, p$ and $r_+$ such that
\[
\P\Big(\operatorname{MR}(\widehat{\cal P}^{\rm sp}_+,\widehat{\cal P}^{\rm sp}_-)\geq K_1\Big(\frac{N\log(T)\log(N\log(T))}{T}+\frac{\log(N)}{N}\Big)\Big)\leq K_2\Big(\sqrt{\frac{N}{T}}+\frac{1}{N}\Big),
\]
for all pairs $(T,N)$ for which $N\geq N_0$ and \eqref{def:1:over:SNRn} is satisfied.
\end{theorem}
The proof of Theorem \ref{thm:missclasification:rate:spectral:method} is provided in Section \ref{sec:proof:missclassification:rate} as well. Similarly to the aggregated method,
the control of the misclassification rate of the spectral method relies on Lemma \ref{lem:l2norm:controls:MR} and Theorem \ref{thm:estimation:spectral:method}. Moreover, the minimax lower bound stated in Theorem \ref{thm:minmax:lower:bound} implies that the random partition defined in \eqref{def:partition:via:spectral:method} is near-optimal.

\subsection{Lower bound for the misclassification rate}
\label{sec:lower:bounds}
In this section, we establish lower bounds on the sample size $T$ needed to recover the partition $({\cal P}_+,\cal{P}_-)$, with respect to the either misclassification rate. 
For each $K>0$ and $u\geq 0$, denote $\tau_K(u)=\sqrt{1-\exp(-Ku)}$. Note that $\tau_{K}(u)\to 1$ as $u\to \infty$, whereas $\tau_{K}(u)\to 0$ as $u\to 0$.
\begin{theorem}
    \label{thm:minmax:lower:bound}
There exist positive constants $K_1$ and $K_2$ depending only on $\mu$ and $\lambda$ such that for all $N \geq 2$ and $T\geq 1$, 
\begin{equation*}
\min_{(\widehat{\cal P}_+,\widehat{\cal P}_-)}\max_{S\in{\cal S}}\E_{S}\Big[\operatorname{MR}(\widehat{\cal P}_+,\widehat{\cal P}_-)\Big]\geq \frac{(r_{\min} - N^{-1})}{2}\Bigg(1-\tau_{K_1}\Big(\frac{T}{N}\Big)-2\tau_{K_2}\Big(\frac{\log(2N)}{N}\Big)\Bigg),   \end{equation*}
where $r_{\min}=r_+\wedge r_-$, ${\cal S}=\{S\subset [N]: |S|= \lceil Nr_+ \rceil\}$ and the infimum is taken over all random partitions $(\widehat{\cal P}_+,\widehat{\cal P}_-)$ of $[N]$ which are measurable with respect to $\sigma(X_1,\ldots, X_{T+1}).$
\end{theorem}
The proof of Theorem \ref{thm:minmax:lower:bound} is presented in Appendix \ref{sec:lower:bound:missclassification}. The main step in the proof is to 
upper bound the variation of the distribution of observations when the true partition $(\cal P_+,\cal P_-)$ is slightly modified.
Once this is done, we conclude by applying Assouad's Lemma \cite[Theorem 2.12]{Tsybakov2009Introduction}. 

Note that if both $T=T_N$ and $N$ diverge but $T_N/N\leq K$ for some constant $K>0$, then Theorem \ref{thm:minmax:lower:bound} implies that
\[
\liminf_{N\to\infty}\min_{(\widehat{\cal P}_+,\widehat{\cal P}_-)}\max_{S\in{\cal S}}\E_{S}\Big[\operatorname{MR}(\widehat{\cal P}_+,\widehat{\cal P}_-)\Big]\geq \frac{r_+\wedge r_-}{2}\big(1-\tau_{K_1}\big(K\big)\big)>0,
\]
showing that recovering the communities ${\cal P}_+$ and ${\cal P}_-$ is impossible in the minimax sense unless $T \gg N$.

\section{Proofs of the structural results}
\label{sec:proofs:structural}

\subsection{Some vector and matrix notation}
\label{sec:vector:matrix:notration:from:first:paper}

The aim of this section is to introduce 
\modif{additional notation 
and simple results already appearing in \citep{Chevallier2024inferring}. 
}

Let $L^{N,\bullet -}$ be the vector of row-wise sums over the columns corresponding to $\mathcal{P}_-$ of the matrix $A^N$, i.e. $L^{N,\bullet -} = A^N 1_{\mathcal{P}_-}$. Then, for all $x\in \{0,1\}^N$, the vector $p_{\theta}(x) = (p_{\theta,1}(x),\dots,p_{\theta,N}(x))$ can be written as,
\begin{equation}\label{eq:p:theta:matrix:notation}
    p_{\theta}(x) = \mu 1_N + (1-\lambda)\left( A^N x - L^{N,\bullet -} \right).
\end{equation}
Now, let us denote $m^N$ the (conditional) mean vector given by
\begin{equation}\label{eq:def:mean:vector}
m^N=(m^N_1,\ldots,m^N_N) = \E_{\theta}(X_0) = \E_{\theta}[p_{\theta}(X_0)],
\end{equation}
where we used the stationarity of the process. With this notation, let us remark that $\Sigma^{(1)}$ rewrites as
\begin{equation}\label{eq:Sigma1:matrix:notation}
    \Sigma^{(1)} = \E_{\theta}\left[ (X_1 - m^N)(X_0 - m^N)^{\top} \right].
\end{equation}
Taking expectation in both sides of Equation \eqref{eq:p:theta:matrix:notation}, one has
\begin{equation}\label{eq:mN:matrix:notation}
    m^N=\mu 1_{N}+(1-\lambda)\left(A^Nm^N-L^{N,\bullet -}\right),
\end{equation}
which in particular gives that the centered version of $p_{\theta}(X_0)$ is
\begin{equation}\label{eq:centered:p:theta}
    p_{\theta}(X_0) - m^N = (1-\lambda) A^N (X_0 - m^N).
\end{equation}
Moreover, using the matrix $Q^N = (I_N - (1-\lambda)A^N)^{-1}$ and the notation for its row-wise sums $\ell^N = Q^N 1_N$ and $\ell^{N,\bullet -} = Q^N 1_{\mathcal{P}_-}$, 
\modif{
the solution $m^N$ to Equation \eqref{eq:mN:matrix:notation} reads as
\begin{equation}\label{eq:mN:ell:ell-}
    m^N = \mu \ell^N + 1_{\mathcal{P}_-} - \ell^{N,\bullet -}.
\end{equation}
}
\modif{Let us remark that the inverse matrix $Q^N$ is controlled by \citep[Section F.3]{chevallier2026supp}:
\begin{equation}\label{eq:control:QN}
    \vvvert Q^N \vvvert_{\infty} \leq \lambda^{-1}.
\end{equation}
}
Finally, let us mention that all coordinates of the vector of temporal means $m^N$ are expected to converge to the asymptotic spatio-temporal mean $m$ defined in \eqref{def:mean_activity}. This statement is formalized in Equation \eqref{eq:concentration:m}.

\modif{The present paper builds upon the modeling framework and some technical results of \cite{Chevallier2024inferring}. Nevertheless, except from the preliminary results detailed above, the only additional dependence is \cite[Lemma 3.8]{Chevallier2024inferring} which is included in Appendix \ref{app:covariance:lemma}. All other proof strategies and arguments are entirely new.}

\subsection{Asymptotics for the simultaneous covariance matrix}
Let us denote $v^N$ the variance vector given by
\begin{equation}\label{eq:def:variance:vector}
v^N=(m^N_1(1-m^N_1),\ldots,m^N_N(1-m^N_N)).
\end{equation}
According to the end of previous section, we expect that $v^N$ is close to $m(1-m) 1_N$. This is formalized in Equation \eqref{eq:concentration:v:main} below.

Let us denote $\Sigma^{(0)} = \E_\theta[(X_0 - m^N)(X_0 - m^N)^{\top}]$ the simultaneous covariance matrix whose $(i,j)$-entry is 
\begin{equation}
\label{def:sigma_0}
\Sigma^{(0)}_{ij} = \cov_{\theta}(X_{i,0},X_{j,0}), \ i,j\in [N].
\end{equation}
The main objective of this section is to provide an asymptotic approximation of the matrix $\Sigma^{(0)}$ as $N\to\infty$ (see Proposition \ref{prop:concentration:asymp_approximation_simulta_cov_matrix}).

The reason to study the simultaneous covariance matrix is that it serves as a proxy for getting the asymptotics of the 1-lagged
covariance matrix $\Sigma^{(1)}$. Indeed, we have the following relation:

\begin{proposition}\label{prop:Sigma1:Sigma0}
    $\Sigma^{(1)}=(1-\lambda)A^N \Sigma^{(0)}$.
\end{proposition}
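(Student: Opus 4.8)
The plan is to exploit the Markov structure of the chain together with the linearity of the transition probabilities in the previous configuration, as recorded in \eqref{eq:centered:p:theta}. Starting from the matrix expression \eqref{eq:Sigma1:matrix:notation}, namely $\Sigma^{(1)} = \E_{\theta}[(X_1 - m^N)(X_0 - m^N)^{\intercal}]$, the first step is to condition on the $\sigma$-field $\mathcal{F}_0$ generated by $X_0$ (and by $\theta$, which is fixed under $\P_{\theta}$). Since $\E_{\theta}[X_1 \mid \mathcal{F}_0] = p_{\theta}(X_0)$ and $(X_0 - m^N)^{\intercal}$ is $\mathcal{F}_0$-measurable, the tower property gives
\[
\Sigma^{(1)} = \E_{\theta}\!\left[ \E_{\theta}[X_1 - m^N \mid \mathcal{F}_0]\,(X_0 - m^N)^{\intercal} \right] = \E_{\theta}\!\left[ (p_{\theta}(X_0) - m^N)(X_0 - m^N)^{\intercal} \right],
\]
using also that $\E_{\theta}[p_{\theta}(X_0)] = m^N$ by \eqref{eq:def:mean:vector}.

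The second step is simply to substitute the centered identity \eqref{eq:centered:p:theta}, which says $p_{\theta}(X_0) - m^N = (1-\lambda) A^N (X_0 - m^N)$. Plugging this in and pulling the deterministic (under $\P_{\theta}$) matrix $(1-\lambda)A^N$ out of the expectation yields
\[
\Sigma^{(1)} = (1-\lambda) A^N\, \E_{\theta}\!\left[ (X_0 - m^N)(X_0 - m^N)^{\intercal} \right] = (1-\lambda) A^N \Sigma^{(0)},
\]
where the last equality is the definition of $\Sigma^{(0)}$ in \eqref{def:sigma_0} (in matrix form). This completes the argument.

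There is essentially no hard part here: the only point requiring a word of care is the conditioning step, i.e. making sure one conditions on $X_0$ (not $X_1$) so that the regression function $p_{\theta}(X_0)$ appears on the correct side of the outer product, and noting that under $\P_{\theta}$ the matrix $A^N$ is non-random so it factors out of $\E_{\theta}$. The stationarity of the chain is what guarantees that the mean vector appearing in both factors is the same $m^N$, and it is already baked into \eqref{eq:Sigma1:matrix:notation} and \eqref{eq:centered:p:theta}; no extra input is needed. I would present the three displays above essentially verbatim as the full proof.
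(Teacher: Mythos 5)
Your proof is correct and follows exactly the same route as the paper's: condition on $\mathcal{F}_0$ in \eqref{eq:Sigma1:matrix:notation} to replace $X_1 - m^N$ by $p_{\theta}(X_0) - m^N$, then apply \eqref{eq:centered:p:theta} and pull the deterministic matrix $(1-\lambda)A^N$ out of $\E_{\theta}$. Your version merely spells out the tower-property step that the paper leaves implicit.
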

\begin{proof}
    By conditioning with respect to $\mathcal{F}_0$ in Equation \eqref{eq:Sigma1:matrix:notation} and then using \eqref{eq:centered:p:theta}, one has
    \begin{equation*}
        \Sigma^{(1)} = \E_{\theta}\left[ (p_\theta(X_0) - m^N)(X_0 - m^N)^{\top} \right] = \E_{\theta}\left[ (1-\lambda)A^N(X_0 - m^N)(X_0 - m^N)^{\top} \right],
    \end{equation*}
    which ends the proof.
\end{proof}

For a $N$-dimensional vector $u=(u_1,\ldots, u_N)$, we write $\diag(u)$ to denote the $N$-by-$N$ diagonal matrix having value $u_i\in \R$ associated with its $i$-th diagonal entry, i.e. $\diag(u)_{ii} = u_i$.

For a $N$-by-$N$ matrix $M$, we denote  $\d0(M)$ the same matrix where the diagonal terms are replaced by zeros, i.e. $\d0(M)_{ij} = M_{ij} \mathbf{1}_{i\neq j}$. 
The simultaneous covariance matrix $\Sigma^{(0)}$ satisfies the following Stein-type matrix equation.
\begin{proposition}\label{prop:Sigma:0:matrix:equation}
    The simultaneous covariance matrix $\Sigma^{(0)}$ is equal to $v^N$ on the diagonal and satisfies $\Sigma^{(0)} = (1-\lambda)^2 A^N\Sigma^{(0)}(A^N)^{\top}$ out of the diagonal, i.e.
    \begin{equation*}
        \Sigma^{(0)}=(1-\lambda)^2 \d0\left( A^N\Sigma^{(0)}(A^N)^{\top} \right) + \diag(v^N).
    \end{equation*}
\end{proposition}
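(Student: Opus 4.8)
The plan is to compute $\Sigma^{(0)}_{ij}=\cov_\theta(X_{i,0},X_{j,0})$ directly by conditioning on $\mathcal{F}_{-1}$ (the sigma-field generated by the past up to time $-1$, or equivalently by $X_{-1}$), and exploit the conditional independence of the coordinates of $X_0$ given $X_{-1}$. Concretely, I would write $X_0 - m^N = (X_0 - p_\theta(X_{-1})) + (p_\theta(X_{-1}) - m^N)$. The first term is, conditionally on $X_{-1}$, a vector of independent centered Bernoulli increments; the second term is $\mathcal{F}_{-1}$-measurable. By the tower property, $\Sigma^{(0)} = \E_\theta[(X_0-m^N)(X_0-m^N)^\intercal]$ splits as a sum of three pieces, two of which (the cross terms) vanish after conditioning on $X_{-1}$ because $\E_\theta[X_0 - p_\theta(X_{-1})\mid \mathcal{F}_{-1}] = 0$.

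Next I would treat the two surviving pieces. The martingale-increment piece is $\E_\theta\big[(X_0 - p_\theta(X_{-1}))(X_0 - p_\theta(X_{-1}))^\intercal\big]$; conditionally on $X_{-1}$ this is a diagonal matrix with $(i,i)$-entry $p_{\theta,i}(X_{-1})(1-p_{\theta,i}(X_{-1}))$, since distinct coordinates of $X_0$ are conditionally independent. Taking $\E_\theta$ and using stationarity, its $(i,i)$-entry is $\E_\theta[X_{i,0}(1-X_{i,0})]=\E_\theta[X_{i,0}] - \E_\theta[X_{i,0}^2]=m^N_i - m^N_i = $ wait — more carefully, $\E_\theta[p_{\theta,i}(X_{-1})(1-p_{\theta,i}(X_{-1}))] = \E_\theta[p_{\theta,i}(X_{-1})] - \E_\theta[p_{\theta,i}(X_{-1})^2] = m^N_i - \E_\theta[p_{\theta,i}(X_{-1})^2]$, which is exactly $\var_\theta(X_{i,0})$ by the conditional variance decomposition, hence equals $v^N_i$. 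So this piece is precisely $\diag(v^N)$. The second surviving piece is $\E_\theta\big[(p_\theta(X_{-1})-m^N)(p_\theta(X_{-1})-m^N)^\intercal\big]$; using the identity $p_\theta(X_{-1}) - m^N = (1-\lambda)A^N(X_{-1}-m^N)$ from \eqref{eq:centered:p:theta} and stationarity ($X_{-1}\equaldistrib X_0$ under $\P_\theta$), this equals $(1-\lambda)^2 A^N \E_\theta[(X_0-m^N)(X_0-m^N)^\intercal](A^N)^\intercal = (1-\lambda)^2 A^N \Sigma^{(0)}(A^N)^\intercal$.

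Combining, $\Sigma^{(0)} = \diag(v^N) + (1-\lambda)^2 A^N\Sigma^{(0)}(A^N)^\intercal$. This already looks like the claimed equation, but it is \emph{not} quite it: the right-hand side's second term contributes both on and off the diagonal. To reconcile, I observe that the diagonal entries of $\diag(v^N)$ must absorb everything, so in fact the correct reading is that on the diagonal $\Sigma^{(0)}_{ii}=v^N_i$ (which is the definition of $v^N$, using $\var_\theta(X_{i,0})=m^N_i(1-m^N_i)$ and $m^N_i=\E_\theta[X_{i,0}]$), and comparing off-diagonal entries of the two sides of $\Sigma^{(0)} = \diag(v^N) + (1-\lambda)^2 A^N\Sigma^{(0)}(A^N)^\intercal$ gives $\Sigma^{(0)}_{ij} = (1-\lambda)^2 (A^N\Sigma^{(0)}(A^N)^\intercal)_{ij}$ for $i\neq j$. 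Writing this as $\Sigma^{(0)} = (1-\lambda)^2\,\d0(A^N\Sigma^{(0)}(A^N)^\intercal) + \diag(v^N)$ requires one extra remark: on the diagonal, the stated equation reads $\Sigma^{(0)}_{ii} = v^N_i$ since $\d0(\cdot)$ kills the diagonal, which is consistent; and off the diagonal it reads $\Sigma^{(0)}_{ij}=(1-\lambda)^2(A^N\Sigma^{(0)}(A^N)^\intercal)_{ij}$, matching what we derived. So the final form follows by splitting the relation $\Sigma^{(0)} = \diag(v^N) + (1-\lambda)^2 A^N\Sigma^{(0)}(A^N)^\intercal$ into its diagonal and off-diagonal parts and noting the diagonal part is automatically the identity $v^N_i=v^N_i$ after one uses it to define $\d0$.

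The only genuinely delicate step is the bookkeeping in the last paragraph: the raw identity $\Sigma^{(0)}=\diag(v^N)+(1-\lambda)^2A^N\Sigma^{(0)}(A^N)^\intercal$ is \emph{false} as stated if taken entrywise on the diagonal (the RHS diagonal would be $v^N_i + (1-\lambda)^2(A^N\Sigma^{(0)}(A^N)^\intercal)_{ii}$, overcounting), so one must be careful that what the conditional-variance computation actually yields is that the \emph{full} variance $\var_\theta(X_{i,0})$ decomposes as $\E_\theta[\var_\theta(X_{i,0}\mid\mathcal F_{-1})] + \var_\theta(\E_\theta[X_{i,0}\mid\mathcal F_{-1}])$, i.e. $v^N_i = (\text{diag of martingale piece})_i + (1-\lambda)^2(A^N\Sigma^{(0)}(A^N)^\intercal)_{ii}$, which rearranges to show the diagonal of the martingale piece is $v^N_i - (1-\lambda)^2(A^N\Sigma^{(0)}(A^N)^\intercal)_{ii}$, not $v^N_i$. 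Hence the clean statement is: off-diagonal, $\Sigma^{(0)}_{ij}=(1-\lambda)^2(A^N\Sigma^{(0)}(A^N)^\intercal)_{ij}$; on-diagonal, $\Sigma^{(0)}_{ii}=v^N_i$ by definition of $v^N$ together with $\Sigma^{(0)}_{ii}=\var_\theta(X_{i,0})=m^N_i(1-m^N_i)$ — and these two facts are exactly summarized by the displayed equation with $\d0$. I would present the argument in that order: first the tower-property splitting, then the two pieces, then carefully extract the off-diagonal identity and separately note the diagonal identity, and finally package both into the $\d0$ formulation.
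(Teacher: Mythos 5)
Your proof is correct and follows essentially the same route as the paper: conditional independence of the coordinates of $X_0$ given $X_{-1}$, stationarity, and the identity \eqref{eq:centered:p:theta} for the off-diagonal entries, together with the Bernoulli marginal $\Sigma^{(0)}_{ii}=\var_\theta(X_{i,0})=m^N_i(1-m^N_i)$ for the diagonal. The paper simply avoids your detour through the (false, as you yourself correctly diagnose) unrestricted identity $\Sigma^{(0)}=\diag(v^N)+(1-\lambda)^2A^N\Sigma^{(0)}(A^N)^\intercal$ by treating diagonal and off-diagonal entries separately from the outset, which is exactly the streamlined presentation you propose in your final paragraph.
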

\begin{proof}    
    Let $i\in [N]$. Under the stationary distribution, $X_{i,0}$ is a Bernoulli variable with parameter $m^N_i$. Hence, $\Sigma^{(0)}_{ii} = \var_{\theta}(X_{i,0}) = m^N_i(1-m^N_i) = v^N_i$.

    Let $i,j\in [N]$ with $i\neq j$. By conditional independence between $X_{i,0}$ and $X_{j,0}$ given $X_{-1}$ and stationarity of the process, we have that 
    \begin{equation*}
        \Sigma^{(0)}_{ij} = \cov_{\theta}(X_{i,0},X_{j,0}) = \cov_{\theta}(p_{\theta,i}(X_{-1}),p_{\theta,j}(X_{-1})) = \cov_{\theta}(p_{\theta,i}(X_{0}),p_{\theta,j}(X_{0})).
    \end{equation*}
    Yet, the centered version of $p_{\theta,i}(X_{0})$ is $p_{\theta,i}(X_{0}) - m^N_i$ so that using Equation \eqref{eq:centered:p:theta} it is easy to get
    \begin{equation*}
        \Sigma^{(0)}_{ij} = (1-\lambda)^2 \mathbb{E}\left[ A^N (X_0 - m^N)(X_0 - m^N)^{\top} (A^N)^{\top} \right]_{ij} = (1-\lambda)^2 [A^N\Sigma^{(0)}(A^N)^{\top}]_{ij}.
    \end{equation*}
\end{proof}

Let us denote $a^N = 1_N (1_N)^{\top} - I_N$ the $N$-by-$N$ matrix full of ones except on the diagonal. Here is our main asymptotic result regarding the simultaneous covariance matrix.
\modif{
\begin{proposition}\label{prop:concentration:asymp_approximation_simulta_cov_matrix}
    There exists a constant $K>0$ depending only on $\lambda, p$ and $r_+$ such that for all $\delta\in (0,1]$ and $N\geq 2$,
    \begin{equation*}
        \P\left(\left\vvvert \Sigma^{(0)} - \left( \diag(v^N) + \frac{(1-\lambda)^2 p^2 m(1-m)}{1 - D^2} \frac{a^N}{N} \right) \right\vvvert_{\max} \geq K \sqrt{\frac{\log(N/\delta)}{N^3}} \right)\leq \delta,
    \end{equation*}
where the real values $m$ and $D$ are defined in \eqref{def:mean_activity}.
\end{proposition}
}

The proof of Proposition \ref{prop:concentration:asymp_approximation_simulta_cov_matrix}, postponed to Section \ref{sec:proof:simult:cov:matrix}, is interesting in its own right. 
It relies on 
\modif{the Stein-type matrix equation satisfied by $\Sigma^{(0)}$}
(see Proposition \ref{prop:Sigma:0:matrix:equation}). To exploit this structure, we introduce the \emph{vectorization} operator and the \emph{Kronecker product}, along with their useful properties, in Section \ref{sec:proof:simult:cov:matrix}.

\subsection{Proofs}

Finally, the last main ingredients to describe the asymptotics of $\Sigma^{(1)}$ are the asymptotics of the vector of row-wise and column-wise sums of $A^N$, denoted by $L^N = A^N 1_N$ and $C^N=(A^N)^{\top}1_N$ respectively, and the asymptotics of $v^N$.

\begin{proposition}\label{prop:asymptotics:concentration}
    There exists a constant $K>0$ depending only on $\lambda$ such that for all $\delta\in (0,1]$ and $N\geq 2$,
    \begin{gather}
        \P\left(\left\| L^{N} - p(r_+ - r_-)1_N  \right\|_{\infty} \geq K\sqrt{\frac{\log(N/\delta)}{N}}\right)\leq \delta,
        \label{eq:concentration:L:main}\\
        \P\left(\left\| C^{N} - p(1_{\mathcal{P}_+} - 1_{\mathcal{P}_-})  \right\|_{\infty} \geq K\sqrt{\frac{\log(N/\delta)}{N}}\right)\leq \delta
        \label{eq:concentration:C:main}\\
        \P\left(\left\| v^N - m(1-m)1_N   \right\|_{\infty} \geq K \sqrt{\frac{\log(N/\delta)}{N}}  \right)\leq \delta.
        \label{eq:concentration:v:main}
    \end{gather}
\end{proposition}
Proposition \ref{prop:asymptotics:concentration} is a particular case of Lemma \ref{lem:asymptotics:concentration} whose proof is given in Section \ref{sec:proof:max:inequality}.

In the remaining of this section, we prove the structural results, namely, Theorem \ref{thm:concentration:Sigma} and Corollaries \ref{cor:concentration:Sigma:operator:norm} and \ref{cor:concentration:sigma:sup:norm}. We start with the proof of Theorem \ref{thm:concentration:Sigma}. 

\begin{proof}[Proof of Theorem \ref{thm:concentration:Sigma}]
Let $0<\delta\leq 1$. First of all, remind that $\Sigma^{(1)}=(1-\lambda)A\Sigma^{(0)}$ (see Proposition \ref{prop:Sigma1:Sigma0}). By Proposition \ref{prop:concentration:asymp_approximation_simulta_cov_matrix} and using \eqref{eq:max:norm:submultiplicative} with the fact that $\vvvert A^N \vvvert_{\infty} \leq 1$, we have,
\begin{equation}\label{eq:control:Sigma1}
    \P\Big(\left\vvvert \Sigma^{(1)} - A^N \left( \frac{c_1}{m(1-m)}\diag(v^N) + \frac{c_2}{p(r_+ - r_-)} \frac{a^N}{N} \right) \right\vvvert_{\max} \geq K \sqrt{\frac{\log(3N/\delta)}{N^3}} \Big)\leq \frac{\delta}{3}.
\end{equation}

On the one hand, Equation \eqref{eq:concentration:v:main} implies that
$$
\P\left(\left\vvvert \diag(v^N) - m(1-m)I_N \right\vvvert_{1} \geq K \sqrt{\frac{\log(3N/\delta)}{N}} \right)\leq \delta/3,
$$
so that using \eqref{eq:max:norm:submultiplicative} with the fact that $\vvvert A^N \vvvert_{\max} \leq 1/N$, we get
\begin{equation}\label{eq:control:AN:vN}
    \P\left(\vvvert A^N\diag(v^N) - m(1-m)A^N \vvvert_{\max} \geq K \sqrt{\frac{\log(3N/\delta)}{N^3}} \right)\leq \delta/3,
\end{equation}

On the other hand, for all $i,j\in [N]$, $(A^Na^N)_{ij} = \sum_{k=1 : k\neq j}^{n} A^N_{ik} = L^N_i - N^{-1}\theta_{ij}$. Hence, Equation \eqref{eq:concentration:L:main} implies that
\begin{equation}\label{eq:control:AN:aN}
    \P\left(\left\vvvert A^N \frac{a^N}{N} - \frac{p(r_+ - r_-)}{N} 1_N 1_N^{\top} \right\vvvert_{\max} \geq K \sqrt{\frac{\log(3N/\delta)}{N^3}} + N^{-2}\right)\leq \delta/3.
\end{equation}

Hence, combining Equations \eqref{eq:control:Sigma1}-\eqref{eq:control:AN:aN} and adjusting the constant $K$, we deduce that  
 $$
    \P\left(\left\vvvert \Sigma^{(1)} - \left( c_1 A^N + \frac{c_2}{N} 1_N 1_N^{\top} \right) \right\vvvert_{\max} \geq K\Bigg(\sqrt{\frac{\log(3N/\delta)}{N^3}}+N^{-2}\Bigg) \right) \leq \delta.
    $$
The result follows by combining the above inequality with the fact that $\log(3N/\delta)\geq \log(6)\geq 1/2\geq 1/N$ since $0<\delta\leq 1$ and $N\geq 2$ 
(this kind of argument will be used several times throughout the paper to simplify the lower bounds at the cost of adjusting the constant $K$, and will usually be omitted in subsequent proofs).
\end{proof}

Now we prove Corollary \ref{cor:concentration:Sigma:operator:norm}.

\begin{proof}[Proof of Corollary \ref{cor:concentration:Sigma:operator:norm}]
    Let $0<\delta\leq 1$. Let us first decompose
    $$
    \Sigma^{(1)} - \left( c_1 \E\left[ A^N \right] + \frac{c_2}{N} 1_N 1_N^\top \right) = c_1 \left( A^N - \E\left[ A^N \right] \right) + \Sigma^{(1)} - \left( c_1 A^N + \frac{c_2}{N} 1_N 1_N^\top \right).
    $$
    
    On the one hand, \cite[Theorem 4.4.3]{vershynin2026high} applied to the matrix $A^N - \E[ A^N ]$ with $t =\sqrt{(\log(2/\delta)}$ (and the value $K$ there can be taken as $(N\sqrt{\log(2)})^{-1}$ whatever $p$ is) gives
    $$
    \P\left(\left\vvvert A^N - \E\left[ A^N \right] \right\vvvert_{2} \geq K \left[ \frac{1}{\sqrt{N}} + \frac{1}{N} \sqrt{\log(2/\delta)} \right]\right)\leq \delta.
    $$

    On the other hand, remark that $\vvvert M \vvvert_{2} \leq N \vvvert M \vvvert_{\max}$ for any $N$-by-$N$ matrix so that Theorem \ref{thm:concentration:Sigma} implies
    $$
    \P\left(\left\vvvert \Sigma^{(1)} - \left( c_1 A^N + \frac{c_2}{N} 1_N 1_N^\top \right) \right\vvvert_{2} \geq K \sqrt{\frac{\log(N/\delta)}{N}} \right) \leq \delta.
    $$

    The union bound concludes the proof upon adjusting the constant $K$ to get a simpler form for the lower bound.
\end{proof}

Finally, we prove Corollary \ref{cor:concentration:sigma:sup:norm}.

\begin{proof}[Proof of Corollary \ref{cor:concentration:sigma:sup:norm}]
By Theorem \ref{thm:concentration:Sigma} and the definition of the vector $\sigma^{\rm ag} =(\Sigma^{(1)})^{\top}1_N$, one can deduce that
\begin{equation*}
    \P\left(\left\| \sigma^{\rm ag} - \left( c_1 C^N + c_2 1_N \right) \right\|_{\infty} \geq K\sqrt{\frac{\log(N/\delta)}{N}} \right) \leq \delta,
\end{equation*}
and the desired result follows from \eqref{eq:concentration:C:main} and the union bound.
\end{proof}

\section{Exact recovery for the aggregated method}
\label{sec:proof:exact:recovery:aggregated:method}

In this section, we prove Theorem \ref{thm:conver:hat:sigma:ag:towards:bar:sigma:ag}
and Corollary \ref{cor:extact_recovery}. We start proving Theorem  \ref{thm:conver:hat:sigma:ag:towards:bar:sigma:ag}. 

\begin{proof}[Proof of Theorem \ref{thm:conver:hat:sigma:ag:towards:bar:sigma:ag}]
Using first the triangle inequality and then the union bound, we obtain 
$$
\P\left(\|\widehat{\sigma}^{\rm ag}-\overline{\sigma}^{\rm ag}\|_{\infty} \geq \varepsilon_1+\varepsilon_2\right)\leq \P\left(\|\widehat{\sigma}^{\rm ag}-\sigma^{\rm ag}\|_{\infty} \geq \varepsilon_1\right)+\P\left(\|\sigma^{\rm ag}-\overline{\sigma}^{\rm ag}\|_{\infty} \geq \varepsilon_2\right),
$$
for all $\varepsilon_1,\varepsilon_2>0$. 
To conclude, take $\varepsilon_1=K_1N\log(TN)T^{-1/2}$ with $K_1>0$ the constant given in Theorem \ref{thm:conv:hat:sigma:ag:towards:sigma:ag}  
and $\varepsilon_2=K\sqrt{\log(N^2)/N}$ with $K>0$ the constant given in Corollary \ref{cor:concentration:sigma:sup:norm}, and apply Theorem \ref{thm:conv:hat:sigma:ag:towards:sigma:ag}  and Corollary \ref{cor:concentration:sigma:sup:norm} with $\delta=N^{-1}$.

\end{proof}

Now, we prove Corollary \ref{cor:extact_recovery}.

\begin{proof}[Proof of Corollary \ref{cor:extact_recovery}]
Throughout the proof, denote $F^N_{\varepsilon}=\{\| \sigma^{\rm ag} - 
    \overline{\sigma}^{\rm ag} 
\|_{\infty}\leq \varepsilon\}$ and $G^N_{\varepsilon}=\{\|\widehat{\sigma}^{\rm ag}-\sigma^{\rm ag }\|_{\infty}\leq \varepsilon\}$, for each $\varepsilon>0$. 
Combining Corollary \ref{cor:concentration:sigma:sup:norm} (with $\delta=N^{-1}$) and Theorem \ref{thm:conv:hat:sigma:ag:towards:sigma:ag}, one can check that 
\begin{equation}
\label{proof:cor:extact_recovery:eq:1}    
\P(G^N_{\varepsilon}\cap F^N_{\varepsilon})\geq 1-(N^{-1}+K_2T^{-1}),
\end{equation}
whenever $\varepsilon\geq \max\{K\sqrt{\log(N^2)/N},K_1N\log(TN)T^{-1/2}\}:=\varepsilon_{N,T}$.
Moreover, it follows from the triangle inequality that 
that 
\begin{equation}
\label{proof:cor:extact_recovery:eq:1.5}  
F^N_{\varepsilon}\cap G^N_{\varepsilon}\subseteq \{\|\widehat{\sigma}^{\rm ag}-\overline{\sigma}^{\rm ag}\|\leq 2\varepsilon \}:=E^N_{2\varepsilon}.    
\end{equation}

\smallskip
{\bf Claim 1.}  Recall that $(\widehat{\mathcal{P}}^{\rm ag,km}_+,\widehat{\mathcal{P}}^{\rm ag,km}_-)$ denotes the two clusters returned by $k$-means clustering applied to $\widehat{\sigma}^{\rm ag}$ with $k=2$. Then, $(\widehat{\cal P}_+^{\rm ag, km},\widehat{\cal P}_-^{\rm ag, km})=({\cal P}_+,{\cal P}_-)$, on the event $E^N_{\varepsilon}$ for all $\varepsilon\leq c_1pr_{\rm min}/(1+r_{\rm min})$.

\smallskip
{\it Proof of Claim 1.} Lemma \ref{lem:structural:properties:kmeans} implies that $({\cal P}_+,{\cal P}_-)$ is the unique global minimizer of the $k$-means criterion on the event $E^N_{\varepsilon}$ for all $\varepsilon\leq c_1pr_{\rm min}/(1+r_{\rm min})$. Hence, under this event, we must have $(\widehat{\cal P}_+^{\rm ag, km},\widehat{\cal P}_-^{\rm ag, km})=({\cal P}_+,{\cal P}_-)$, concluding the proof.  

\smallskip
{\bf Claim 2.} Let $(\widehat{\mathcal{P}}^{\rm ag,lkm}_+,\widehat{\mathcal{P}}^{\rm ag,lkm}_-)$ be the two clusters returned by Lloyd's $k$-means algorithm applied to $\widehat{\sigma}^{\rm ag}$ with $k=2$, initialized with the means $m_1 = \min_j \widehat{\sigma}^{\rm ag}_j$ and $m_2 = \max_j \widehat{\sigma}^{\rm ag}_j$. Let $\varepsilon< pc_1/2$. Then, on the event $E^N_{\varepsilon}$, we also have $(\widehat{\mathcal{P}}^{\rm ag,lkm}_-,\widehat{\mathcal{P}}^{\rm ag,lkm}_+)=({\cal P}_+,{\cal P}_-).$  

\smallskip
{\it Proof of Claim 2.}
Under the assumption that $\varepsilon < pc_1$, one can easily check that $\max_{j\in {\cal P}_-}\widehat{\sigma}^{\rm ag}_j\leq \overline{\sigma}_-+\varepsilon<\overline{\sigma}_+-\varepsilon\leq \min_{j\in {\cal P}_+}\widehat{\sigma}^{\rm ag}_j$ which, in turn, implies that $m_1=\min_{j\in {\cal P}_-}\widehat{\sigma}^{\rm ag}_j$ and $m_2=\max_{j\in {\cal P}_+}\widehat{\sigma}^{\rm ag}_j$. Furthermore, since $\varepsilon < pc_1/2$, we have $\overline{\sigma}_+-\overline{\sigma}_->4\varepsilon$. As a consequence, $|\widehat{\sigma}^{\rm ag}_i-m_1|\leq 2\varepsilon<\overline{\sigma}_+-\overline{\sigma}_--2\varepsilon\leq |\widehat{\sigma}^{\rm ag}_i-m_2|$ for all $i\in {\cal P}_-$. Arguing similarly, we also have that $|\widehat{\sigma}^{\rm ag}_i-m_2|< |\widehat{\sigma}^{\rm ag}_i-m_1|$ for all $i\in {\cal P}_+$. This implies that the assignment step in the first iteration of Lloyd's $k$-means algorithm returns  the partition made of $S_1 = \mathcal{P}_-$ and $S_2 = \mathcal{P}_+$.
Finally, using similar arguments as above, we can also check that this partition 
is stable and so $\widehat{\mathcal{P}}^{\rm ag,lkm}_- = \mathcal{P}_-$ and $\widehat{\mathcal{P}}^{\rm ag,lkm}_+ = \mathcal{P}_+$, concluding the proof. 

\smallskip
{\bf Claim 3.} Recall that  $(\widehat{\mathcal{P}}^{\rm ag,mt}_+, \widehat{\mathcal{P}}^{\rm ag,mt}_-)$ denotes the two clusters returned by mean threshold clustering applied to $\widehat{\sigma}^{\rm ag}$. Let $\varepsilon < c_1pr_{\min}$. Then, on the event $E^N_{\epsilon}$, we must have $(\widehat{\mathcal{P}}^{\rm ag,mt}_+, \widehat{\mathcal{P}}^{\rm ag,mt}_-)=({\cal P}_+,{\cal P}_-)$. 

\smallskip
{\it Proof of Claim 3.}
First, denote $\tilde{\sigma}^{\rm ag}=\widehat{\sigma}^{\rm ag}-N^{-1}\langle \widehat{\sigma}^{\rm ag},1_N\rangle 1_N$ the centered version of the vector $\widehat{\sigma}^{\rm ag}$ and observe that 
    \begin{equation}
    \label{proof:cor:extact_recovery:eq:2}
      (\widehat{\mathcal{P}}^{\rm ag,mt}_+, \widehat{\mathcal{P}}^{\rm ag,mt}_-)=(\{i\in [N]: \tilde{\sigma}^{\rm ag}_i\geq 0\}, \{i\in [N]: \tilde{\sigma}^{\rm ag}_i< 0\}).   
    \end{equation}
 Next, check using first the triangle inequality and then Hölder inequality that
\begin{align}
 \|\tilde{\sigma}^{\rm ag} - 2c_1p(r_-1_{{\cal P}_+}-r_-1_{{\cal P}_-})\|_{\infty}&=\|\tilde{\sigma}^{\rm ag}-\overline{\sigma}^{\rm ag} -N^{-1}\langle \tilde{\sigma}^{\rm ag}-\overline{\sigma}^{\rm ag},1_N\rangle 1_N\|_{\infty}\nonumber \\
 &\leq \|\tilde{\sigma}^{\rm ag}-\overline{\sigma}^{\rm ag}\|_{\infty}+N^{-1}|\langle \tilde{\sigma}^{\rm ag}-\overline{\sigma}^{\rm ag},1_N\rangle|\nonumber \\
 &\leq \|\tilde{\sigma}^{\rm ag}-\overline{\sigma}^{\rm ag}\|_{\infty}+N^{-1}\|\tilde{\sigma}^{\rm ag}-\overline{\sigma}^{\rm ag}\|_{\infty}\|1_N\|_1\nonumber \\
  \label{proof:cor:extact_recovery:eq:3}
 &=2\|\tilde{\sigma}^{\rm ag}-\overline{\sigma}^{\rm ag}\|_{\infty}.
\end{align}
Denoting $r_{\rm min}=r_+\wedge r_-$, we then deduce from \eqref{proof:cor:extact_recovery:eq:2} and \eqref{proof:cor:extact_recovery:eq:3} that    
\begin{align*}
\{ (\widehat{\cal P}_+^{\rm ag, mt},\widehat{\cal P}_-^{\rm ag, mt})\neq ({\cal P}_+,{\cal P}_-)\}&=\{\|\tilde{\sigma}^{\rm ag} - 2c_1p(r_-1_{{\cal P}_+}-r_-1_{{\cal P}_-})\|_{\infty}\geq 2c_1pr_{\rm min} \}\\
&\subseteq
\|\widehat{\sigma}^{\rm ag} -\overline{\sigma}^{\rm ag}\|_{\infty}\geq c_1pr_{\rm min} \},
\end{align*}
which, in turn, implies that 
\[
E^N_{\varepsilon}\subset \{\|\widehat{\sigma}^{\rm ag} -\overline{\sigma}^{\rm ag}\|_{\infty}< c_1pr_{\rm min} \}\subseteq \{ (\widehat{\cal P}_+^{\rm ag, mt},\widehat{\cal P}_-^{\rm ag, mt})=({\cal P}_+,{\cal P}_-)\}, 
\]
for all $\epsilon<c_1p r_{\rm min},$ concluding the proof.

\smallskip
Hence, we have just proved that exact recovery is achieved for all clustering procedures on $E^N_{\varepsilon}$
as long as $\varepsilon<\min\{c_1pr_{\rm min},pc_1/2, c_1pr_{\rm min}/(1+r_{\rm min}) \}=c_1pr_{\rm min}/(1+r_{\rm min})$ (since $r_{\rm min}\leq 1/2$).  

Finally, letting $N_0=\{N\geq 2:K\sqrt{\log(N^2)/N}<c_1pr_{\rm min}/(2(1+r_{\rm min}))\}$
and verifying that $\varepsilon_{N,T}<c_1pr_{\rm min}/(2(1+r_{\rm min}))$ for all $(N,T)$ such that $N\geq N_0$ and
\[
T\geq \Bigg(\frac{4(1+r_{\rm min})K_1 N\log(TN)}{c_1pr_{\rm min}}\Bigg)^2,
\]
the result follows from \eqref{proof:cor:extact_recovery:eq:1} with $\varepsilon=\varepsilon_{N,T}$.

\end{proof}

\section{Misclassification rate for aggregated and spectral methods}
\label{sec:proof:missclassification:rate}

In this section, we prove Theorems \ref{thm:conver:hat:sigma:ag:towards:bar:sigma:ag:in:l2}, \ref{thm:missclasification:rate:aggregated:method}, \ref{thm:estimation:spectral:method} and \ref{thm:missclasification:rate:spectral:method}, besides the auxiliary result stated in Lemma \ref{lem:l2norm:controls:MR} below.
We start by proving Theorem \ref{thm:conver:hat:sigma:ag:towards:bar:sigma:ag:in:l2}. 

\begin{proof}[Proof of Theorem \ref{thm:conver:hat:sigma:ag:towards:bar:sigma:ag:in:l2}]
Recall that $\widehat{\sigma}^{\rm ag}=(\widehat{\Sigma}^{(1)})^{\top}1_N$ and $\overline{\sigma}^{\rm ag}=(\overline{\Sigma}^{(1)})^{\top}1_N$. Using that $\|1_N\|_2=N^{1/2}$ and the triangle inequality, we have that 
\begin{align*}
\|\widehat{\sigma}^{\rm ag}-\overline{\sigma}^{\rm ag}\|_2&\leq \vvvert (\widehat{\Sigma}^{(1)}-\overline{\Sigma}^{(1)})^{\top}\vvvert_2 \|1_N\|_2\\
&= \vvvert \widehat{\Sigma}^{(1)}-\overline{\Sigma}^{(1)}\vvvert_2 N^{1/2}\\
&\leq \Big(\vvvert \widehat{\Sigma}^{(1)}-\Sigma^{(1)}\vvvert_2+\vvvert \Sigma^{(1)}-\overline{\Sigma}^{(1)}\vvvert_2\Big)N^{1/2}.
\end{align*}
Combining the above inequality and Theorems \ref{cor:concentration:Sigma:operator:norm} (with $\delta=N^{-1}$) and \ref{thm:error:between:empirical:Sigma1:and:Sigma1}, the result follows. 
\end{proof}

Next, we prove Theorem \ref{thm:missclasification:rate:aggregated:method}.

\begin{proof}[Proof of Theorem \ref{thm:missclasification:rate:aggregated:method}]

The first step in the proof is to show that
\begin{equation}
\label{eq0:proof:thm:missclasification:rate:aggregated:method}
\operatorname{MR}(\widehat{\mathcal{P}}^{\mathrm{ag, \bullet}}_+, \widehat{\mathcal{P}}^{\mathrm{ag, \bullet}}_-)\leq \frac{K_{\bullet} \| \widehat{\sigma}^{\rm ag} - \overline{\sigma}^{\rm ag} \|_2^2}{|\overline{\sigma}_+-\overline{\sigma}_-|^2 \cdot (r_+^N\wedge r_-^N)^2} N^{-1},    
\end{equation}
with $K_{\rm mt} = 2$ and $K_{\rm km} = 20$.

First, Inequality \eqref{eq0:proof:thm:missclasification:rate:aggregated:method} for the estimated communities $(\widehat{{\cal P}}_+^{\rm ag, mt},\widehat{{\cal P}}_-^{\rm ag, mt})$ is a direct consequence of Lemma \ref{lem:l2norm:controls:MR} with $\overline{\sigma}=\overline{\sigma}^{\rm ag}$, $b_+=\overline{\sigma}_+$, $b_-=\overline{\sigma}_-$ and $\widehat{\sigma}=\widehat{\sigma}^{\rm ag}$.

Next, we prove \eqref{eq0:proof:thm:missclasification:rate:aggregated:method} for the estimated communities $(\widehat{{\cal P}}_+^{\rm ag, km},\widehat{{\cal P}}_-^{\rm ag, km})$.
To that end, denote $\widehat{\sigma}^{\rm km}_a=|\widehat{\cal P}_a^{\rm ag, km}|^{-1}\sum_{i\in \widehat{\cal P}_a^{\rm ag, km}}\widehat{\sigma}^{\rm ag}_i$, for each $a\in\{-,+\}$, and consider the vector 
\[
\widehat{\sigma}^{\rm km}=\widehat{\sigma}^{\rm km}_+ 1_{\widehat{\cal P}_+^{\rm ag, km}}
+\widehat{\sigma}^{\rm km}_- 1_{\widehat{\cal P}_-^{\rm ag, km}}.
\]
By the definition of $k$-means clustering, we have that for any partition $(S,S^c)$ of $[N]$, the following inequality holds:
\begin{equation}
\label{eq1:proof:thm:missclasification:rate:aggregated:method}
\|\widehat{\sigma}^{\rm ag}-\widehat{\sigma}^{\rm km}\|^2_2\leq \|\widehat{\sigma}^{\rm ag}-(\widehat{\mu}_S1_S+\widehat{\mu}_{S^c}1_{S^c})\|^2_2,   \end{equation}
where $\widehat{\mu}_V=|V|^{-1}\sum_{i\in V}\widehat{\sigma}^{\rm ag}_i$ for any subset $V \subseteq [N]$. Choosing $(S,S^c)=({\cal P}_+,{\cal P}_-)$ in \eqref{eq1:proof:thm:missclasification:rate:aggregated:method} and then using  triangle and Jensen inequalities, we obtain that 
\begin{align}
\label{eq2:proof:thm:missclasification:rate:aggregated:method}
\|\widehat{\sigma}^{\rm ag}-\widehat{\sigma}^{\rm km}\|^2_2&\leq \|\widehat{\sigma}^{\rm ag}-(\widehat{\mu}_{{\cal P}_+}1_{{\cal P}_+}+\widehat{\mu}_{{\cal P}_-}1_{{\cal P}_-})\|^2_2 \nonumber \\
&\leq 2\Big(\|\widehat{\sigma}^{\rm ag}-\overline{\sigma}^{\rm ag}\|^2_2+\|\overline{\sigma}^{\rm ag}-(\widehat{\mu}_{{\cal P}_+}1_{{\cal P}_+}+\widehat{\mu}_{{\cal P}_-}1_{{\cal P}_-})\|^2_2\Big)\nonumber \\
&=2\Big(\|\widehat{\sigma}^{\rm ag}-\overline{\sigma}^{\rm ag}\|^2_2+
|{\cal P}_+|(\overline{\sigma}_+-\widehat{\mu}_{{\cal P}_+})^2+|{\cal P}_-|(\overline{\sigma}_- - \widehat{\mu}_{{\cal P}_-})^2\Big)
\end{align}
Now, using once again Jensen inequality, we deduce for each $a\in\{-,+\}$ that
\begin{equation*}
|{\cal P}_a| (\widehat{\mu}_{{\cal P}_a} - \overline{\sigma}_a)^2 \leq 
|{\cal P}_a| \Big(|{\cal P}_a|^{-1}\sum_{i\in {\cal P}_a}(\widehat{\sigma}^{\rm ag}_i-\overline{\sigma}_a)^2\Big) = 
\sum_{i\in {\cal P}_a}(\widehat{\sigma}^{\rm ag}_i-\overline{\sigma}_a)^2,
\end{equation*}
so that it follows from \eqref{eq2:proof:thm:missclasification:rate:aggregated:method} that
\begin{align*}
\|\widehat{\sigma}^{\rm ag}-\widehat{\sigma}^{\rm km}\|^2_2&\leq 4\|\widehat{\sigma}^{\rm ag}-\overline{\sigma}^{\rm ag}\|^2_2.
\end{align*}
Using Jensen inequality a third time and then the above inequality, we deduce that  
\begin{align}
\label{eq3:proof:thm:missclasification:rate:aggregated:method}
\|\widehat{\sigma}^{\rm km}-\overline{\sigma}^{\rm ag}\|^2_2&\leq 2\Big(\|\widehat{\sigma}^{\rm ag}-\widehat{\sigma}^{\rm km}\|^2_2+\|\widehat{\sigma}^{\rm ag}-\overline{\sigma}^{\rm ag}\|^2_2\Big)\leq 10\|\widehat{\sigma}^{\rm ag}-\overline{\sigma}^{\rm ag}\|^2_2.
\end{align}

Now, note that $N^{-1} \sum_{j=1}^N \widehat{\sigma}^{\rm km}_j = 
N^{-1}\left( \widehat{\sigma}^{\rm km}_+ |\widehat{\cal P}_+^{\rm ag, km}| +
\widehat{\sigma}^{\rm km}_- |\widehat{\cal P}_-^{\rm ag, km}| \right) \in ( \widehat{\sigma}^{\rm km}_-, \widehat{\sigma}^{\rm km}_+)$. This remark implies that the estimated partition defined in Lemma \ref{lem:l2norm:controls:MR} from the estimator $\widehat{\sigma}^{\rm km}$ is in fact $(\widehat{\cal P}_+^{\rm ag, km},\widehat{\cal P}_-^{\rm ag, km})$. Hence, Lemma \ref{lem:l2norm:controls:MR} (with $\overline{\sigma}=\overline{\sigma}^{\rm ag}$, $b_+=\overline{\sigma}_+$, $b_-=\overline{\sigma}_-$ and $\widehat{\sigma}=\widehat{\sigma}^{\rm km}$) and \eqref{eq3:proof:thm:missclasification:rate:aggregated:method} imply that
\[
\operatorname{MR}(\widehat{\mathcal{P}}^{\mathrm{ag, km}}_+, \widehat{\mathcal{P}}^{\mathrm{ag, km}}_-)\leq \frac{2 \| \widehat{\sigma}^{\rm km} - \overline{\sigma}^{\rm ag} \|_2^2}{|\overline{\sigma}_+-\overline{\sigma}_-|^2 \cdot (r_+^N\wedge r_-^N)^2} N^{-1}\leq \frac{20 \| \widehat{\sigma}^{\rm ag} - \overline{\sigma}^{\rm ag} \|_2^2}{|\overline{\sigma}_+-\overline{\sigma}_-|^2 \cdot (r_+^N\wedge r_-^N)^2} N^{-1},
\]
proving \eqref{eq0:proof:thm:missclasification:rate:aggregated:method} for $(\widehat{{\cal P}}_+^{\rm ag, km},\widehat{{\cal P}}_-^{\rm ag, km})$.

Finally, using \eqref{eq0:proof:thm:missclasification:rate:aggregated:method}, observe that for all $N\geq N_1:=\lceil 2/(r_+\wedge r_-)\rceil$ (which ensures that $r_+^N\wedge r_-^N\geq (r_+\wedge r_-)/2$), we have 
\[
\operatorname{MR}(\widehat{\mathcal{P}}^{\mathrm{ag},\bullet}_+, \widehat{\mathcal{P}}^{\mathrm{ag}, \bullet}_-)\leq \frac{4K_{\bullet}\| \widehat{\sigma}^{\rm ag} - \overline{\sigma}^{\rm ag} \|_2^2}{|\overline{\sigma}_+-\overline{\sigma}_-|^2 \cdot (r_+\wedge r_-)^2} N^{-1}.
\]
Therefore, the result follows by combining the above inequality with Theorem \ref{thm:conver:hat:sigma:ag:towards:bar:sigma:ag:in:l2}.

\end{proof}

Now, we prove Theorem \ref{thm:estimation:spectral:method}.

\begin{proof}[Proof of Theorem \ref{thm:estimation:spectral:method}]

Recall that $\widecheck{v}$ is a leading right singular vector of $\widehat{\Sigma}^{(1)}$ and that $\widecheck{{\cal P}}$ is a preliminary community defined by \eqref{eq:def:preliminary:partition}. More generally, the notation $\widecheck{\eta}$ highlights an estimator of $\eta$ for which the sign ambiguity must be resolved. 
Moreover, recall that $\overline{\sigma}^{\rm sp}$ is a singular vector of the matrix $\overline{\Sigma}^{(1)}$, associated with the largest singular value $\sigma_1(\overline{\Sigma}^{(1)})$ defined in \eqref{def:leading:singular:value:bar:sigma1}. Assume that $r^N_+ \wedge r_-^N >0$, so that $\sigma_1(\overline{\Sigma}^{(1)})>0$.

Denote $E:=\{\vvvert \widehat{\Sigma}^{(1)}-\overline{\Sigma}^{(1)} \vvvert_{2}<(1-1/\sqrt{2}) \sigma_1(\overline{\Sigma}^{(1)})\}$. All the proof takes place on the event $E$. By Wedin's $\sin\Theta$ theorem \cite[Theorem 2.9 and Inequality 2.26a]{Chen2021Spectral}, we have that $\widecheck{v}$ is close to $\overline{\sigma}^{\rm sp}$ (up to its sign) in the sense that:
\begin{equation}
\label{ineq:1:proof:thm:community:detection}
\|\widecheck{v}-\overline{\sigma}^{\rm sp}\|_2\wedge \|\widecheck{v}+\overline{\sigma}^{\rm sp}\|_2\leq \frac{2\vvvert \widehat{\Sigma}^{(1)}-\overline{\Sigma}^{(1)} \vvvert_{2}}{\sigma_1(\overline{\Sigma}^{(1)})}.
\end{equation}

Let $\alpha\in\{-1,1\}$ such that $\|\widecheck{v}-\overline{\sigma}^{\rm sp}\|_2\wedge \|\widecheck{v}+\overline{\sigma}^{\rm sp}\|_2=\|\alpha \widecheck{v}-\overline{\sigma}^{\rm sp}\|_2$.
The sign $\alpha$ is unknown and the main objective is to prove that it can be estimated using $\hat{\sigma}^{\rm ag}$.

\medskip
{\bf Claim 1.} Whatever the value of $\alpha\in\{-1,+1\}$, $| |\widecheck{{\cal P}}| - |{\cal P}_\alpha| | \leq N \operatorname{MR}_\alpha$,
where $\operatorname{MR}_+ = \operatorname{MR}(\widecheck{{\cal P}},\,[N] \setminus \widecheck{\cal P})$ and $\operatorname{MR}_- = \operatorname{MR}([N] \setminus \widecheck{\cal P},\, \widecheck{{\cal P}})$.

\medskip
{\it Proof of Claim 1.} We only prove the case $\alpha=-1$. The other case is treated similarly.

Writing $|\widecheck{{\cal P}}|=|{\cal P}_+\cap \widecheck{{\cal P}}|+|{\cal P}_-\cap \widecheck{{\cal P}}|=|{\cal P}_+\cap \widecheck{{\cal P}}|+|{\cal P}_-|-|{\cal P}_-\cap ([N] \setminus \widecheck{\cal P})|$, we obtain that $|\widecheck{{\cal P}}|-|{\cal P}_-|\leq |{\cal P}_+\cap \widecheck{{\cal P}}|\leq N\operatorname{MR}(\widecheck{{\cal P}},\,[N] \setminus \widecheck{\cal P}).$ Arguing similarly, we can also check that that $|{\cal P}_-|-|\widecheck{{\cal P}}|\leq |{\cal P}_-\cap ([N] \setminus \widecheck{\cal P})|\leq N\operatorname{MR}(\widecheck{{\cal P}},\,[N] \setminus \widecheck{\cal P})$, concluding the proof. 
\medskip

In the following, assume that $N\geq N_1:=\lceil 2/(r_+\wedge r_-)\rceil$ so that $r_+^N\wedge r_-^N\geq (r_+\wedge r_-)/2$. Using this inequality on a combination of Claim 1 and Lemma \ref{lem:l2norm:controls:MR} applied to $\overline{\sigma}^{\rm sp}$ (hence $b_+ = \overline{\sigma}_+ / \|\overline{\sigma}^{\rm ag}\|_2$ and $b_- = \overline{\sigma}_{-} / \|\overline{\sigma}^{\rm ag}\|_2$) and $\alpha \widecheck{v}$, we have
\begin{equation*}
    N \operatorname{MR}_{\alpha} \leq  \frac{2 \| \alpha \widecheck{v} - \overline{\sigma}^{\rm sp} \|_2^2}{|b_+-b_-|^2 \cdot (r_+^N\wedge r_-^N)^2} 
    \leq \frac{8 \|\overline{\sigma}^{\rm ag}\|_2^2 \cdot\| \alpha\widecheck{v} - \overline{\sigma}^{\rm sp} \|_2^2}{|\overline{\sigma}_+-\overline{\sigma}_-|^2 \cdot (r_+\wedge r_-)^2}.
\end{equation*}
Using the fact that $\|\overline{\sigma}^{\rm ag}\|_2\leq N^{1/2} (|\overline{\sigma}_+|\vee |\overline{\sigma}_-|)$ and Inequality \eqref{ineq:1:proof:thm:community:detection}, we get
\begin{equation}\label{eq:control:MR:vvvert}
    \operatorname{MR}_{\alpha} \leq \kappa \frac{\vvvert \widehat{\Sigma}^{(1)}-\overline{\Sigma}^{(1)} \vvvert_{2}^2}{\sigma_1^2(\overline{\Sigma}^{(1)})},
\end{equation}
with $\kappa = 32 (|\overline{\sigma}_+|\vee |\overline{\sigma}_-|)^2 / (|\overline{\sigma}_+-\overline{\sigma}_-|^2 \cdot (r_+\wedge r_-)^2)$. It follows from the definitions of $\overline{\sigma}_+$ and $\overline{\sigma}_-$ that $|\overline{\sigma}_+|\vee |\overline{\sigma}_-| \geq c_1 p$ and $|\overline{\sigma}_+-\overline{\sigma}_-| = 2c_1 p$. In turn, we have $\kappa \geq 32$ since $r_+\wedge r_- \leq 1/2$. In particular, it implies that the event $F$ defined below is included in $E$.

Let $\widecheck{\sigma}_+$ and $\widecheck{\sigma}_-$ denote the two following averages,
\begin{equation}
\label{def:hatsigma}
\widecheck{\sigma}_+ = |\widecheck{{\cal P}}|^{-1}\sum_{i\in \widecheck{{\cal P}}}\widehat{\sigma}_i^{\rm ag}
\quad \text{and} \quad
\widecheck{\sigma}_- = |[N] \setminus \widecheck{\cal P}|^{-1}\sum_{i\in [N] \setminus \widecheck{\cal P}}\widehat{\sigma}_i^{\rm ag}.
\end{equation}
They are expected to be close to $\overline{\sigma}_+$ and $\overline{\sigma}_-$ up to sign ambiguity as stated below.

\medskip
{\bf Claim 2.} Let $N\geq N_1:=\lceil 2/(r_+\wedge r_-)\rceil$ and $\alpha \in\{-1,1\}$. On the event 
\[
F:=\Big\{\vvvert \widehat{\Sigma}^{(1)}-\overline{\Sigma}^{(1)} \vvvert_{2}< \sqrt{\frac{r_+ \wedge r_-}{2\kappa}} \, \sigma_1(\overline{\Sigma}^{(1)})\Big\} \subset E,
\]
we have 
$|\widecheck{\sigma}_+ - \overline{\sigma}_\alpha| \vee |\widecheck{\sigma}_- - \overline{\sigma}_{-\alpha}| \leq C \vvvert \widehat{\Sigma}^{(1)}-\overline{\Sigma}^{(1)}\vvvert_2$, where
\begin{equation*}
    C =  \frac{4 \kappa |\overline{\sigma}_+ - \overline{\sigma}_-|}{(r_+\wedge r_-) \sigma^2_1(\overline{\Sigma}^{(1)})} + \frac{2}{\sqrt{r_+\wedge r_-}}.
\end{equation*}

\medskip
{\it Proof of Claim 2.} 
We only prove the inequality regarding $\widecheck{\sigma}_+$. The other one is treated similarly. Observe that
\begin{align}
\label{eq:2:proof:thm:community:detection}
\widecheck{\sigma}_+&=\frac{1}{|\widecheck{{\cal P}}|}\langle \widehat{\sigma}^{\rm ag}, 1_{\widecheck{{\cal P}}}\rangle = \frac{1}{|\widecheck{{\cal P}}|}\langle \overline{\sigma}^{\rm ag}, 1_{\widecheck{{\cal P}}}\rangle+\frac{1}{|\widecheck{{\cal P}}|}\langle \widehat{\sigma}^{\rm ag}-\overline{\sigma}^{\rm ag}, 1_{\widecheck{{\cal P}}}\rangle\nonumber\\
&=\frac{(\overline{\sigma}_+ |{\cal P}_+\cap \widecheck{{\cal P}}|+\overline{\sigma}_- |{\cal P}_-\cap \widecheck{{\cal P}}|)}{|\widecheck{{\cal P}}|}+\frac{1}{|\widecheck{{\cal P}}|}\langle \widehat{\sigma}^{\rm ag}-\overline{\sigma}^{\rm ag}, 1_{\widecheck{{\cal P}}}\rangle.
\end{align}
Next, whatever the value of $\alpha\in\{-1,1\}$, we use Claim 1 to get
\begin{align}
\label{ineq:3:proof:thm:community:detection}
 \frac{(\overline{\sigma}_+ |{\cal P}_+\cap \widecheck{{\cal P}}|+\overline{\sigma}_- |{\cal P}_-\cap \widecheck{{\cal P}}|)}{|\widecheck{{\cal P}}|}&=\frac{(\overline{\sigma}_\alpha |{\cal P}_\alpha\cap \widecheck{{\cal P}}|+\overline{\sigma}_{-\alpha} |{\cal P}_{-\alpha}\cap \widecheck{{\cal P}}|)}{|\widecheck{{\cal P}}|} \nonumber\\
 &=\overline{\sigma}_\alpha+\frac{1}{|\widecheck{{\cal P}}|}|\widecheck{{\cal P}}\setminus {\cal P}_\alpha|(\overline{\sigma}_{-\alpha}-\overline{\sigma}_\alpha).
\end{align}
Combining identities \eqref{eq:2:proof:thm:community:detection} and \eqref{ineq:3:proof:thm:community:detection}, and then using Claim 1, 
we obtain that 
\begin{align}
\label{ineq:4:proof:thm:community:detection}
|\widecheck{\sigma}_+-\overline{\sigma}_\alpha|&\leq \frac{1}{|\widecheck{{\cal P}}|}|\widecheck{{\cal P}}\setminus {\cal P}_\alpha||\overline{\sigma}_{-\alpha}-\overline{\sigma}_\alpha|+ \frac{1}{|\widecheck{{\cal P}}|}\langle \widehat{\sigma}^{\rm ag}-\overline{\sigma}^{\rm ag}, 1_{\widecheck{{\cal P}}}\rangle \nonumber\\
&\leq \frac{1}{|\widecheck{{\cal P}}|}|{\cal M}||\overline{\sigma}_{-\alpha}-\overline{\sigma}_\alpha|+ \frac{1}{|\widecheck{{\cal P}}|}\langle \widehat{\sigma}^{\rm ag}-\overline{\sigma}^{\rm ag}, 1_{\widecheck{{\cal P}}}\rangle\nonumber \\
&= \frac{1}{|\widecheck{{\cal P}}|}|{\cal M}||\overline{\sigma}_{-}-\overline{\sigma}_+|+ \frac{1}{|\widecheck{{\cal P}}|}\langle \widehat{\sigma}^{\rm ag}-\overline{\sigma}^{\rm ag}, 1_{\widecheck{{\cal P}}}\rangle.
\end{align}
Now, it follows from  Claim 1 and Inequality \eqref{eq:control:MR:vvvert} that   
\begin{align}
\label{ineq:5:proof:thm:community:detection}
|\widecheck{{\cal P}}|&\geq |{\cal P}_\alpha| - N \operatorname{MR}_\alpha = N\Big( r^N_{\alpha}- \operatorname{MR}_\alpha \Big)\nonumber \\
&\geq N\Big( \frac{r_+\wedge r_-}{2} - \kappa \frac{\vvvert \widehat{\Sigma}^{(1)}-\overline{\Sigma}^{(1)} \vvvert_{2}^2}{\sigma_1^2(\overline{\Sigma}^{(1)})}\Big)\geq N \frac{r_+\wedge r_-}{4},
\end{align}
where in the last inequality we used the fact that we are working on the event $F$.
Hence, putting together \eqref{ineq:4:proof:thm:community:detection} and \eqref{ineq:5:proof:thm:community:detection}, and using  Inequality \eqref{eq:control:MR:vvvert} once more,  we conclude that on $F$ and for all $N\geq N_1$,   
\begin{align}
\label{ineq:5.5:proof:thm:community:detection}
|\widecheck{\sigma}_+-\overline{\sigma}_\alpha|&\leq \frac{4|\overline{\sigma}_--\overline{\sigma}_+|}{r_+\wedge r_-} \operatorname{MR}_\alpha+\frac{1}{|\widecheck{{\cal P}}|}\langle \widehat{\sigma}^{\rm ag}-\overline{\sigma}^{\rm ag}, 1_{\widecheck{{\cal P}}}\rangle\nonumber \\
&\leq \frac{4 \kappa |\overline{\sigma}_--\overline{\sigma}_+|}{(r_+\wedge r_-) \sigma^2_1(\overline{\Sigma}^{(1)})}\vvvert \widehat{\Sigma}^{(1)}-\overline{\Sigma}^{(1)} \vvvert_{2}^2+\frac{1}{|\widecheck{{\cal P}}|}\langle \widehat{\sigma}^{\rm ag}-\overline{\sigma}^{\rm ag}, 1_{\widecheck{{\cal P}}}\rangle.
\end{align}
Finally, using Cauchy-Schwarz inequality, the fact that $\|1_{V}\|_2 = |V|^{1/2}$ for all $V \subset [N]$
and then \eqref{ineq:5:proof:thm:community:detection},  we have that 
\begin{align}
\label{ineq:6:proof:thm:community:detection}
\frac{1}{|\widecheck{{\cal P}}|}\langle \widehat{\sigma}^{\rm ag}-\overline{\sigma}^{\rm ag}, 1_{\widecheck{{\cal P}}}\rangle& \leq |{\widecheck{{\cal P}}}|^{-1/2}\|\widehat{\sigma}^{\rm ag}-\overline{\sigma}^{\rm ag}\|_2\nonumber \\
&\leq \frac{2N^{-1/2}}{\sqrt{r_+\wedge r_-}}\|\widehat{\sigma}^{\rm ag}-\overline{\sigma}^{\rm ag}\|_2 = \frac{2N^{-1/2}}{\sqrt{r_+\wedge r_-}}\|(\widehat{\Sigma}^{(1)}-\overline{\Sigma}^{(1)})^{\top}1_N\|_2\nonumber \\
&\leq \frac{2}{\sqrt{r_+\wedge r_-}}\vvvert (\widehat{\Sigma}^{(1)}-\overline{\Sigma}^{(1)})^{\top}\vvvert_2=\frac{2}{\sqrt{r_+\wedge r_-}}\vvvert \widehat{\Sigma}^{(1)}-\overline{\Sigma}^{(1)} \vvvert_2. 
\end{align}
Combining \eqref{ineq:5.5:proof:thm:community:detection} and \eqref{ineq:6:proof:thm:community:detection} ends the proof of Claim 2.

\medskip
{\bf Claim 3}. Let $N_1$ and $C$ be defined as in Claim 2. For all $N\geq N_1$, on the event 
\[
G:=\Big\{\vvvert \widehat{\Sigma}^{(1)}-\overline{\Sigma}^{(1)} \vvvert_{2}<\Big(\sqrt{r_+\wedge r_-}\frac{\kappa}{4}\sigma_1(\overline{\Sigma}^{(1)})\Big)\wedge \Big(\frac{\overline{\sigma}_+ - \overline{\sigma}_-}{4C}\Big)\Big\}\subset F,
\]
the estimated sign is correct in the sense that $\widehat{\sigma}^{\rm sp} = \alpha \widecheck{v}$.

\medskip
{\it Proof of Claim 3}. 
By definitions of $\widecheck{\sigma}$ and $\widehat{\sigma}^{\rm sp}$, i.e. Equations \eqref{def:hatsigma} and \eqref{eq:def:hat:sigma:spectral}, we have $\widehat{\sigma}^{\rm sp} = \widecheck{v}$ if $\widecheck{\sigma}_+ \geq \widecheck{\sigma}_-$ and $\widehat{\sigma}^{\rm sp} = - \widecheck{v}$ else. Hence it suffices to prove that $\widecheck{\sigma}_+ \geq \widecheck{\sigma}_-$ if and only if $\alpha = 1$.

Suppose $\alpha = 1$. Since $G\subset F$ and $N\geq N_1$, it follows from Claim 2 that 
\begin{align*}
 \widecheck{\sigma}_+ - \widecheck{\sigma}_- \geq \overline{\sigma}_+ - \overline{\sigma}_- - 2C\vvvert \widehat{\Sigma}^{(1)}-\overline{\Sigma}^{(1)} \vvvert_2 
 \geq \overline{\sigma}_+ - \overline{\sigma}_- - (\overline{\sigma}_+ - \overline{\sigma}_-)/2
 = (\overline{\sigma}_+ - \overline{\sigma}_-)/2>0. 
\end{align*}
The same argument applies to the case $\alpha = -1$ and concludes the proof of Claim 3.

\medskip
Combining Claim 3 and Inequality \eqref{ineq:1:proof:thm:community:detection}, we deduce that $\|\widehat{\sigma}^{\rm sp} - \overline{\sigma}^{\rm sp}\|_2 \leq 2\vvvert \widehat{\Sigma}^{(1)}-\overline{\Sigma}^{(1)} \vvvert_{2} / \sigma_1(\overline{\Sigma}^{(1)})$ on the event $G$. First, remind that $\sigma_1\big(\overline{\Sigma}^{(1)}\big)=\sqrt{(\overline{\sigma}_+)^2r^N_++(\overline{\sigma}_-)^2r^N_-}$ so that $\sigma_1\big(\overline{\Sigma}^{(1)}\big) \geq C_1 :=  (\overline{\sigma}_+ \vee \overline{\sigma}_-) \cdot 2^{-1/2} \cdot (r_+ \wedge r_-)^{1/2} >0$ for all $N \geq N_1$.
Let $K_1$, $K_2$ and $K_3$ be the positive constants depending  on $\lambda$ given by Theorem \ref{thm:error:between:empirical:Sigma1:and:Sigma1} and $K$ be the positive constant depending on $\lambda$, $p$ and $r_+$ given by Corollary \ref{cor:concentration:Sigma:operator:norm} and let  
\[
N_2=\min\{N\geq 1: (K_1\vee (2K))\sqrt{\log(N)/N}\leq C_2/2\},
\]
where $C_2=\Big(\sqrt{r_+\wedge r_-}\frac{\kappa}{4}\sigma_1(\overline{\Sigma}^{(1)})\Big)\wedge \Big(\frac{|\overline{\sigma}_+|\wedge |\overline{\sigma}_-|}{2C}\Big).$
Also, increase $K_3$ if necessary so that $(K_1\vee (2K))(K_3)^{-1/2}\leq C_2/2$ and observe that for such $K_3$ it holds that
\[
(K_1\vee (2K))\sqrt{\frac{N\log(T)\log(N\log(T))}{T}}\leq  C_2/2,
\]
for all pairs $(N,T)$ satisfying $T\geq K_3N\log (T)\log(N\log(T))$, i.e. condition \eqref{def:1:over:SNRn}.

As a consequence, for all pairs $(N,T)$ for which $N\geq N_0:=N_1\vee N_2$ and 
condition \eqref{def:1:over:SNRn} is satisfied with $K_3$ as above, we have that 
$$
H:=\Bigg\{\vvvert \widehat{\Sigma}^{(1)}-\overline{\Sigma}^{(1)} \vvvert_{2}\leq (K_1\vee (2K))\Big(\sqrt{\frac{N}{T}\log(T)\log(N\log(T))}+\sqrt{\frac{\log(N)}{N}}\Big)\Bigg\}\subset G.
$$ 
Combining the inequality $\|\widehat{\sigma}^{\rm sp} - \overline{\sigma}^{\rm sp}\|_2 \leq 2\vvvert \widehat{\Sigma}^{(1)}-\overline{\Sigma}^{(1)} \vvvert_{2} / C_1$ with Corollary \ref{cor:concentration:Sigma:operator:norm} (with $\delta=N^{-1}$) and Theorem \ref{thm:error:between:empirical:Sigma1:and:Sigma1}, the result follows.
\end{proof}

Below, we prove Theorem \ref{thm:missclasification:rate:spectral:method}. 

\begin{proof}[Proof of Theorem \ref{thm:missclasification:rate:spectral:method}]
The proof follows from Theorem \ref{thm:estimation:spectral:method} and Lemma \ref{lem:l2norm:controls:MR}.
    
\end{proof}

Finally, we prove Lemma \ref{lem:l2norm:controls:MR}. 

\begin{lemma}\label{lem:l2norm:controls:MR}
    Let $\overline{\sigma} = b_+ 1_{{\cal P}_+} + b_- 1_{{\cal P}_-}$ for some $b_+ \neq b_-$ and $\widehat{\sigma}$ be an estimator of $\overline{\sigma}$. Let $(\widehat{\cal P}_+, \widehat{\cal P}_-)$ be the partition defined as
\begin{equation*}
\widehat{\cal P}_+= \left\{i\in[N]: \widehat{\sigma}_i \ge N^{-1} \sum_{j=1}^N \widehat{\sigma}_j\right\}
\quad \text{and} \quad
\widehat{\cal P}_-=[N]\setminus \widehat{\cal P}_+.
\end{equation*}    
Then the following inequality holds:
    \begin{equation*}
        \operatorname{MR}(\widehat{\mathcal{P}}_+, \widehat{\mathcal{P}}_-) \leq \frac{2 \| \widehat{\sigma} - \overline{\sigma} \|_2^2}{|b_+-b_-|^2 \cdot (r_+^N\wedge r_-^N)^2} N^{-1}.
    \end{equation*}
\end{lemma}
\begin{proof}
Let $v$ and $\overline{v}$ be the centered versions of $\widehat{\sigma}$ and $\overline{\sigma}$, that is
\begin{equation*}
    x_i = \hat{\sigma}_i - N^{-1}\sum_{j=1}^N \hat{\sigma}_j
    \quad \text{and} \quad
    \overline{x}_i = \overline{\sigma}_i - N^{-1}\sum_{j=1}^N \overline{\sigma}_j.
\end{equation*}
Remark that the set of misclassified components is ${\cal M}=\{i\in [N]:\text{sign}(v_i)\neq \text{sign}(\overline{v}_i) \}$. Note that the coordinates of $v$ and $\overline{v}$ have different signs in ${\cal M}$. As a consequence, for all $i\in {\cal M}$, 
\[
|v_i - \overline{v}_i| \geq 
| \overline{v}_i| \geq 
|b_+-b_-| (r_+^N \wedge r_-^N),
\]
and so $\| v - \overline{v} \|_2^2 \geq |\mathcal{M}| \cdot |b_+-b_-|^2 \cdot (r_+^N\wedge r_-^N)^2$. Yet, by triangle and Cauchy-Schwarz inequalities,
\begin{eqnarray}
    \| v - \overline{v} \|_2 
    &=&    \| \hat{\sigma} - \overline{\sigma} - N^{-1} \langle \hat{\sigma} - \overline{\sigma}, 1_N \rangle 1_N \|_2 \nonumber\\
    &\leq& \| \hat{\sigma} - \overline{\sigma} \|_2 + N^{-1} |\langle \hat{\sigma} - \overline{\sigma}, 1_N \rangle| \cdot \| 1_N \|_2 \nonumber\\
    &\leq& \| \hat{\sigma} - \overline{\sigma} \|_2 + N^{-1} \| \hat{\sigma} - \overline{\sigma} \|_2 \cdot \| 1_N \|_2^2 = 2 \| \hat{\sigma} - \overline{\sigma} \|_2. \label{eq:example:centered:l2}
\end{eqnarray}
To conclude, recall that $\operatorname{MR}(\widehat{\mathcal{P}}_+, \widehat{\mathcal{P}}_-) = |\mathcal{M}|/N$.
\end{proof}

\section{Simulation study}
\label{sec:simulation}

The numeric experiments were made using Julia programming language and are available in the package \href{https://github.com/jucheval/MeanFieldGraph.jl}{MeanFieldGraph.jl}, as well as the material used to produce the figures below.

The simulation framework follows the one of \cite{Chevallier2024inferring}. In particular, we consider the parameter $\beta = \mu/\lambda$ instead of $\mu$ because the set of admissible values of $\beta$ is independent of $\lambda$ (which is not the case for $\mu$). If not specified otherwise, the following parameter values are used:
\begin{equation*}
    N = 50, \quad 
    r_+ = .5, \quad 
    \beta = .5, \quad 
    \lambda = .5, \quad 
    p = .5, \quad
    N_{\rm simu} = 1000.
\end{equation*}
Furthermore, the fractions of excitatory and inhibitory components are chosen as $r_+^N = \lceil r_+N\rceil$ and $r_-^N=N-|\mathcal{P}_+|$.

The performance of any community detection method is evaluated via two metrics: the probability of exact recovery \big(PER = $\P(\{\widehat{{\cal P}}_{+}={\cal P_{+}}\}\cap \{\widehat{{\cal P}}_{-}={\cal P_{-}}\} )$\big) and the mean misclassification rate \big(MMR = $\E[ \operatorname{MR}(\widehat{{\cal P}}_{+},\widehat{{\cal P}}_{-})]$\big), recall the definition of the misclassification rate $\operatorname{MR}(\widehat{{\cal P}}_{+},\widehat{{\cal P}}_{-})$ given in \eqref{def:discrepancy:measure}.
Both metrics are estimated via Monte Carlo method ($N_{\rm simu}$ indicates the number of simulations). 

We evaluate 8 methods, obtained as the combination of the two methods (aggregated and spectral) developed in the paper with four clustering algorithms ($k$-means, mean threshold and hierarchical clustering with two linkage functions).
Before comparing the performance of the different methods in terms of PER and MMR, we highlight that the spectral method is computationally more expensive than the aggregated method. Indeed, the spectral method requires estimating the full 1-lagged covariance matrix rather than just its column sums and finding its leading singular vector. For instance, with $N=100$ and $T=10000$, the computation of $\widehat{\sigma}^{\rm sp}$ takes around $130$ms, whereas $\widehat{\sigma}^{\rm ag}$ takes only $0.5$ms.

Tables \ref{tab:probability:exact:recovery} and \ref{tab:mean:misclassification:rate} give a comparison of these 8 methods in terms of PER and MMR respectively.
We selected specific couples $(N,T)$ to produce the tables. 
The aggregated method combined with mean threshold clustering (denoted by ag\_threshold in the following) is always among the best methods. This is also true for the full dataset. To be exhaustive, let us mention that, out of 420 couples $(N,T)$, there are:
\begin{itemize}
    \item 25 couples where ag\_threshold is not among the best methods in terms of MMR (this number goes down to 13 couples if MMR is rounded up to 1\%),
    \item 7 couples where ag\_threshold is not among the best methods in terms of PER (this number goes down to 1 couple if PER is rounded up to 1\%).
\end{itemize}
However, this is an artifact of the balanced populations ($r_+ = 0.5$) considered in the simulation framework. The performance of the mean threshold clustering drops drastically in unbalanced settings (see Appendix \ref{app:plots}). The aggregated method combined with $k$-means clustering does not suffer form this drawback and yet has comparable performances. Let us mention that if we remove the two methods based on mean threshold clustering there are:
\begin{itemize}
    \item 27 couples $(N,T)$ where ag\_kmeans is not among the best methods in terms of MMR (this number goes down to 14 couples if MMR is rounded up to 1\%),
    \item 11 couples where ag\_kmeans is not among the best methods in terms of PER (this number goes down to 2 couples if PER is rounded up to 1\%).
\end{itemize}
For these two reasons, ag\_kmeans is the recommended method and is the only one considered until the end of this section.

\begin{table}[ht]
    \centering
    \scriptsize
    \begin{tabular}{c|c||c|c|c|c||c|c|c|c}
    \toprule
    \multirow{2}{*}{$N$} & \multirow{2}{*}{$T$} & \multicolumn{4}{c||}{Aggregated method} & \multicolumn{4}{c}{Spectral method}\\
        \cline{3-10}
     & & kmeans & threshold & avg & ward & kmeans & threshold & avg & ward\\
    \midrule
    $34$ & $5272$ & $84 \pm 2$ & $\mathbf{87 \pm 2}$ & $78 \pm 3$ & $78 \pm 3$ & $77 \pm 3$ & $80 \pm 2$ & $70 \pm 3$ & $68 \pm 3$\\
    $94$ & $15796$ & $86 \pm 2$ & $\mathbf{88 \pm 2}$ & $79 \pm 3$ & $81 \pm 2$ & $78 \pm 3$ & $80 \pm 2$ & $68 \pm 3$ & $68 \pm 3$\\
    $142$ & $26320$ & $\mathbf{90 \pm 2}$ & $\mathbf{90 \pm 2}$ & $82 \pm 2$ & $85 \pm 2$ & $82 \pm 2$ & $84 \pm 2$ & $72 \pm 3$ & $76 \pm 3$\\
    $190$ & $36844$ & $89 \pm 2$ & $\mathbf{90 \pm 2}$ & $81 \pm 2$ & $84 \pm 2$ & $82 \pm 2$ & $83 \pm 2$ & $71 \pm 3$ & $75 \pm 3$\\
    $250$ & $50000$ & $\mathbf{92 \pm 2}$ & $\mathbf{92 \pm 2}$ & $81 \pm 2$ & $86 \pm 2$ & $85 \pm 2$ & $86 \pm 2$ & $73 \pm 3$ & $77 \pm 3$\\
    \bottomrule
    \end{tabular}
    \caption{Probability of exact recovery (as percentages) $\pm$ radius of the 95\% confidence interval, computed over $N_{\rm simu}$ simulations. The couples $(N,T)$ are arbitrarily chosen so that the highest PER of each line is around 90\%. The columns avg and ward correspond to hierarchical clustering with, respectively, the average and Ward linkage functions.}
    \label{tab:probability:exact:recovery}
\end{table}

\begin{table}[ht]
    \centering
    \scriptsize
    \begin{tabular}{c|c||c|c|c|c||c|c|c|c}
    \toprule
    \multirow{2}{*}{$N$} & \multirow{2}{*}{$T$} & \multicolumn{4}{c||}{Aggregated method} & \multicolumn{4}{c}{Spectral method}\\
        \cline{3-10}
     & & kmeans & threshold & avg & ward & kmeans & threshold & avg & ward\\
    \midrule
    $34$ & $2641$ & $3 \pm 3$ & $\mathbf{2 \pm 3}$ & $4 \pm 6$ & $4 \pm 4$ & $5 \pm 4$ & $4 \pm 3$ & $7 \pm 9$ & $6 \pm 5$\\
    $94$ & $7903$ & $2 \pm 1$ & $\mathbf{1 \pm 1}$ & $2 \pm 3$ & $3 \pm 3$ & $3 \pm 2$ & $3 \pm 2$ & $5 \pm 5$ & $4 \pm 3$\\
    $142$ & $10534$ & $\mathbf{2 \pm 1}$ & $\mathbf{2 \pm 1}$ & $3 \pm 4$ & $3 \pm 3$ & $4 \pm 2$ & $4 \pm 2$ & $6 \pm 6$ & $6 \pm 3$\\
    $190$ & $13165$ & $\mathbf{2 \pm 1}$ & $\mathbf{2 \pm 1}$ & $4 \pm 5$ & $4 \pm 3$ & $4 \pm 2$ & $4 \pm 2$ & $8 \pm 9$ & $6 \pm 4$\\
    $250$ & $18427$ & $\mathbf{2 \pm 1}$ & $\mathbf{2 \pm 1}$ & $3 \pm 3$ & $3 \pm 2$ & $4 \pm 1$ & $4 \pm 1$ & $6 \pm 7$ & $5 \pm 3$\\
    \bottomrule
    \end{tabular}
    \caption{Mean misclassification rate (as percentages) $\pm$ standard deviation, computed over $N_{\rm simu}$ simulations. The couples $(N,T)$ are arbitrarily chosen so that the lowest MMR of each line is around 2\%. The columns avg and ward correspond to hierarchical clustering with, respectively, the average and Ward linkage functions.}
    \label{tab:mean:misclassification:rate}
\end{table}

Figure \ref{fig:proba:exact:recovery} illustrates the asymptotic regimes of Corollary \ref{cor:extact_recovery} and Theorem \ref{thm:missclasification:rate:spectral:method} for the ag\_kmeans method. The left plot shows that there is a separation between couples $(T,N)$ for which exact recovery never occurs (dark color) and those for which exact recovery always occurs (light color). Furthermore, the red curve demarcating this separation is compatible (modulo a log factor) with our condition $(N/T^{1/2})\log(NT) \to 0$ implying exact recovery. 

The right plot shows a smoother transition between couples $(T,N)$ for which the misclassification rate is close to random clustering (dark color) and those for which the misclassification rate is low (light color). Furthermore, the level set corresponding to a MMR value of 0.1 seems to be a line which is compatible with the fact that our convergence rate is mainly $N/T$ (modulo log factors).

\begin{figure}[ht]
    \includegraphics[width=.49\textwidth]{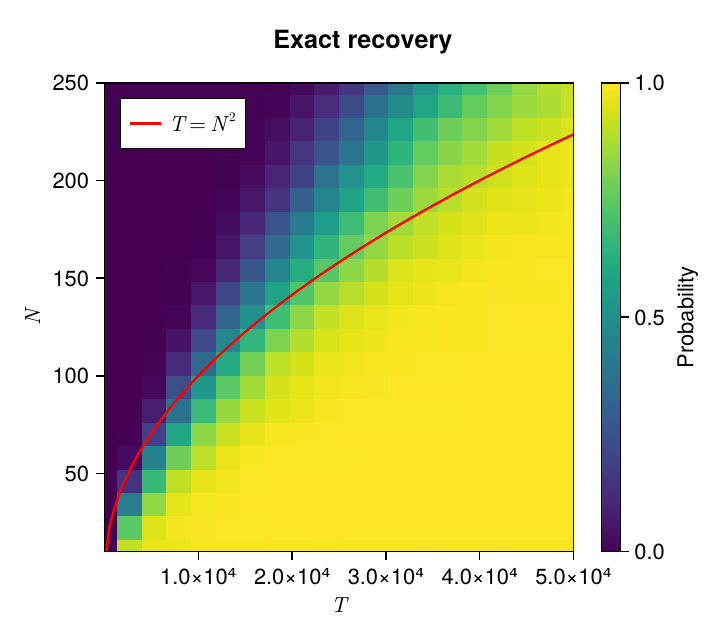}%
    \includegraphics[width=.49\textwidth]{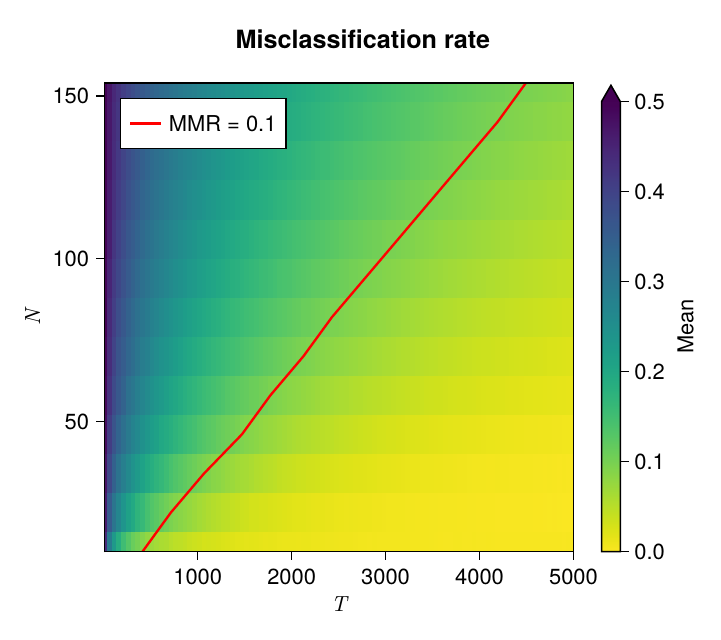}
    \caption{\label{fig:proba:exact:recovery}
    Estimated PER (left) and MMR (right) as a function of $N$ and $T$ (computed over $N_{\rm simu}$ simulations) for ag\_kmeans method. Left: The red curve correspond to the equation $T = N^2$ which is the critical condition of our asymptotic regime. Right: The red curve correspond to the couples $(N,T)$ for which the MMR is equal to 0.1.
    \emph{The scales are different from one plot to the other.}
    }
\end{figure}

Finally, Appendix \ref{app:plots} gives an overview of the performance of ag\_kmeans and ag\_threshold methods as one of the parameters ($N$, $r_+$, $\beta$, $\lambda$ or $p$) varies.

\begin{acks}[Acknowledgments]
The authors would like to thank the anonymous referees for their valuable comments and suggestions. 
This research has been conducted while J.C. was in Statify team at Centre Inria de l'Université Grenoble Alpes.
\end{acks}

\begin{funding}
G.O. was supported by the Serrapilheira Institute (grant number Serra – 2211-42049), FAPERJ (grants E-26/201.397/2021 and E-26/204.532/2024) and CNPq (grants 303166/2022-3). J.C. was supported by ANR-19-CE40-0024 (CHAllenges in MAthematical NEuroscience).
\end{funding}

\bibliographystyle{imsart-nameyear}
\bibliography{Ref}

\clearpage

\appendix

\section{Lemmas based on Hoeffding inequality}
\label{sec:proof:max:inequality}

The two lemmas stated here rely on Hoeffding concentration inequality.

\begin{lemma}\label{lem:Hoeffding:theta}
    Let $(\theta_{ij})_{1\leq i,j \leq N}$ be i.i.d. entries distributed as $\operatorname{Ber}(p)$ and $A, B \subset [N]$ be two subsets. Then,
    \begin{equation}\label{eq:Hoeffding:theta:1}
        \mathbb{P}\left( \max_{i\in [N]} \left| \frac{1}{N} \sum_{k \in A} (\theta_{ij} - p) \right| \geq \sqrt{\frac{|A|\log(2N/\delta)}{2N^2}}\right) \leq \delta,
    \end{equation}
    \begin{equation}\label{eq:Hoeffding:theta:2}
        \mathbb{P}\left( \max_{i,j\in [N]: \ i\neq j}\left|\frac{1}{N}\sum_{k=1}^{N}\theta_{ik}\theta_{jk}-p^2\right| \geq 2\sqrt{\frac{\log(4N/\delta)}{2N}}\right) \leq \delta,
    \end{equation}
    \begin{equation}\label{eq:Hoeffding:theta:3}
        \mathbb{P}\left(\max_{i,j\in [N]: \ i\neq j}\left|\frac{1}{N^2}\sum_{(k,\ell)\in A\times B}(\theta_{ik}\theta_{j\ell}-p^2)\right| \geq 2\sqrt{\frac{\log(4N/\delta)}{2N}}\right) \leq \delta.
    \end{equation}
\end{lemma}
\begin{proof}
    The independent and centered random variables $\theta_{ij} - p$ have range equal to 1 so Hoeffding inequality combined with the union bound gives
    \begin{equation*}
        \mathbb{P}\left( \max_{i\in [N]} \left| \frac{1}{N} \sum_{k \in B} (\theta_{ik} - p) \right| \geq z \right) \leq 2 N e^{-2N^2z^2/|A|}.
    \end{equation*}
    Taking $z = (|A|\log(2N/\delta)/(2N^2))^{1/2}$ proves~\eqref{eq:Hoeffding:theta:1}. 
    
    The proofs of the two last statements are similar. We only prove the last one which is a bit more complex.
    Let $i\neq j$. First of all, observe that we can write 
    \begin{equation*}
        \frac{1}{N^2}\sum_{(k,\ell)\in A\times B}(\theta_{ik}\theta_{j\ell}-p^2)=\left(\frac{1}{N}\sum_{k\in A}\theta_{ik}\right)\sum_{\ell\in B}\frac{(\theta_{j\ell}-p)}{N}+\frac{p|B|}{N}\sum_{k\in A}\frac{(\theta_{ik}-p)}{N}.
    \end{equation*}
   Next, using that $(|A|\vee |B|)/N\leq 1$, $0\leq p\leq 1$ and $|\theta_{mn}|\leq 1$, we obtain from the previous inequality the following upper bound  
    \begin{equation*}
        \max_{i,j\in [N]:i\neq j}\left|\frac{1}{N^2}\sum_{(k,\ell)\in A\times B}(\theta_{ik}\theta_{j\ell}-p^2)\right|\leq 
        \max_{j\in [N]}\left|\sum_{\ell\in B}\frac{(\theta_{j\ell}-p)}{N}\right|
        +\max_{i\in [N]}\left|\sum_{k\in A}\frac{(\theta_{ik}-p)}{N}\right|.
    \end{equation*}
    Using Equation~\eqref{eq:Hoeffding:theta:1}, the union bound and substituting $\delta$ by $\delta/2$ conclude the proof.
\end{proof}

The lemma below explicitly contains Proposition \ref{prop:asymptotics:concentration}.

\begin{lemma}\label{lem:asymptotics:concentration}
Remind that $D$ is defined in Equation~\eqref{def:mean_activity}. There exists a constant $K>0$ depending only on $\lambda$ such that for all $b\in\{-,+\}$, $\delta\in (0,1]$ and $N\geq 2$,
\begin{gather}
    \P\left(\left\| L^{N,\bullet b} - bpr_b1_N \right\|_{\infty} \geq K\sqrt{\frac{\log(N/\delta)}{N}}\right)\leq \delta,
    \label{eq:concentration:Lb}\\
    \P\left(\left\| L^{N} - p(r_+ - r_-)1_N  \right\|_{\infty} \geq K\sqrt{\frac{\log(N/\delta)}{N}}\right)\leq \delta,
    \label{eq:concentration:L}\\
    \P\left(\left\| C^{N} - p(1_{\mathcal{P}_+} - 1_{\mathcal{P}_-})  \right\|_{\infty} \geq K\sqrt{\frac{\log(N/\delta)}{N}}\right)\leq \delta,
    \label{eq:concentration:C}\\
    \P\left(\left\| \ell^{N,\bullet b} - \left( 1_{\mathcal{P}_b} + \frac{b(1-\lambda) p r_b}{1 - D} 1_N \right)  \right\|_{\infty} \geq K \sqrt{\frac{\log(N/\delta)}{N}}  \right)\leq \delta,
    \label{eq:concentration:ellb}\\
    \P\left(\left\| \ell^{N} - \frac{1}{1 - D} 1_N   \right\|_{\infty} \geq K \sqrt{\frac{\log(N/\delta)}{N}}  \right)\leq \delta,
    \label{eq:concentration:ell}\\
    \P\left(\left\| m^{N} - m 1_N  \right\|_{\infty} \geq K \sqrt{\frac{\log(N/\delta)}{N}}  \right)\leq \delta,
    \label{eq:concentration:m}\\
    \P\left(\left\| v^N - m(1-m)1_N   \right\|_{\infty} \geq K \sqrt{\frac{\log(N/\delta)}{N}}  \right)\leq \delta.
    \label{eq:concentration:v}
\end{gather}
\end{lemma}

\begin{remark}
    The asymptotics above are related to \cite[Lemmas F.1. and F.4.]{chevallier2026supp}. There are three differences: 1) control in expectation there is replaced by control in probability here, 2) the 2-norm there is replaced by the infinity one here, 3) the rate of order 1 there is replaced by a rate of order $(\log(N)/N)^{1/2}$ here (remind the natural upper bound $\| x \|_2 \leq \sqrt{N}\| x \|_\infty$ for $x\in \mathbb{R}^N$).
\end{remark}

\begin{proof}
Let us start with the proof of \eqref{eq:concentration:Lb}. By definition, for all $i\in [N]$ and $b\in\{-,+\}$,
\begin{equation*}
    L^{N,\bullet b}_i - bpr_b^N = \sum_{j\in \mathcal{P}_b} (A^N_{ij} - bp) = \frac{b}{N} \sum_{j\in \mathcal{P}_b} (\theta_{ij} - p).
\end{equation*}
Hence, Lemma~\ref{lem:Hoeffding:theta} and the fact that $r^N_b\leq 1$, imply that
\begin{equation*}
    \P\left(\left\| L^{N,\bullet b} - bpr^N_b1_N \right\|_{\infty} \geq \sqrt{\frac{\log(2N/\delta)}{2N}} \right)\leq \delta.
\end{equation*}
Combining the above inequality with the fact that $|r^N_b - r_b|\leq KN^{-1}$, and then adjusting the constant $K$ suitably, we obtain ~\eqref{eq:concentration:Lb}.
The proofs of Equations~\eqref{eq:concentration:L} and~\eqref{eq:concentration:C} follow the same lines and are therefore omitted.

We now turn to \eqref{eq:concentration:ell}. Let us denote $\varepsilon^N = L^{N} - p(r_+ - r_-)1_N$ and remind that $\ell^N = Q^N 1_N$. Starting from $1_{N} = Q^N[I_N - (1-\lambda)A^N]1_{N}$, one gets that
\begin{equation*}
    \ell^{N} = 1_{N} + (1-\lambda)Q^N L^{N} = 1_{N} + (1-\lambda)Q^N [p(r_+ - r_-) 1_N + \varepsilon^N],
\end{equation*}
so that
\begin{equation*}
    \ell^N = \frac{1}{1 - D} \left( 1_N + (1-\lambda)Q^N \varepsilon^N \right).
\end{equation*}
Hence,
\begin{equation*}
    \left\| \ell^{N} - \frac{1}{1 - D} 1_N \right\|_{\infty} \leq \frac{1-\lambda}{1-D} \vvvert Q^N \vvvert_{\infty} \| \varepsilon^N \|_{\infty} \leq \frac{1-\lambda}{\lambda(1-D)} \| \varepsilon^N \|_{\infty},
\end{equation*}
since $\vvvert Q^N \vvvert_{\infty} \leq \lambda^{-1}$ by Equation~\eqref{eq:control:QN}. Equation~\eqref{eq:concentration:ell} then follows from~\eqref{eq:concentration:L}.

Along the same lines, one can prove that 
\begin{equation*}
    \ell^{N,\bullet b} = 1_{\mathcal{P}_b} + b(1-\lambda)pr_b \ell^N + (1-\lambda)Q^N \varepsilon^{N,\bullet b},
\end{equation*}
with $\varepsilon^{N,\bullet b} = L^{N,\bullet b} - bpr_b^N1_N$,
and in turn that \eqref{eq:concentration:ellb} follows from \eqref{eq:concentration:Lb} and~\eqref{eq:concentration:ell}.

Starting from \eqref{eq:mN:ell:ell-} and using the fact that 
\begin{equation*}
    \frac{\mu}{1 - D}1_{N} + 1_{\mathcal{P}_-} - \left( 1_{\mathcal{P}_-} - \frac{(1-\lambda) p r_-}{1 - D} 1_N \right) = m 1_N,
\end{equation*}
one easily deduces \eqref{eq:concentration:m} from \eqref{eq:concentration:ellb} and \eqref{eq:concentration:ell}.

Finally, remark that
$$
\|(m^N)^2 - m^2 1_N\|_{\infty}\leq 2 \|m^N - m 1_N\|_{\infty},
$$
so that \eqref{eq:concentration:v} follows from \eqref{eq:concentration:m}.
\end{proof}

\section{Convergence of $\widehat{\sigma}^{\rm ag}$}
\label{sec:proof:conv:hat:sigma:ag:towards:sigma:ag}

In this section, we prove the convergence of $\widehat{\sigma}^{\rm ag}$ towards $\sigma^{\rm ag}$ stated in the result below.

\begin{theorem}
\label{thm:conv:hat:sigma:ag:towards:sigma:ag}
There exist positive constants $K_1,K_2$ and $K_3$ depending on $\lambda$ such that for all $N\geq 2$ and $\theta\in\{0,1\}^{N\times N}$, 
$$
\P_{\theta}\left(\|\widehat{\sigma}^{\rm ag}-\sigma^{\rm ag}\|_{\infty} \geq K_1\frac{N\log(TN)}{\sqrt{T}}\right) \leq  K_2T^{-1},
$$
provided that $T\geq K_3\log(N)$.
\end{theorem}

Note that Theorem \ref{thm:conv:hat:sigma:ag:towards:sigma:ag} holds for all realizations of the random matrix $\theta$. Clearly, integrating with respect to $\theta$ leads to a similar result with respect to the probability measure $\P$, under which the environment is random. 
The proof relies on \cite[Theorem 3]{ost2020sparse} to control the convergence of the empirical mean and empirical covariance.

\begin{proof}
Along this proof, we use the following notation: for a vector $v$ in $\mathbb{R}^N$, $\overline{v}$ denotes the spatial mean $\overline{v} = N^{-1} \sum_{i=1}^N v_i$.
First of all, observe that
\begin{align*}
\sigma^{\rm ag}_j=\sum_{i=1}^N\Sigma^{(1)}_{ij}&=\sum_{i=1}^N\cov_{\theta}(X_{i,1},X_{j,0})\\&
=N\cov_{\theta}(\overline{X_{1}},X_{j,0})=N\left(\E_{\theta}(\overline{X_{1}}X_{j,0})-\overline{m^N}m^N_j\right),
\end{align*}
and 
\begin{align*}
\widehat{\sigma}^{\rm ag}_j&=\frac{N}{T-1}\sum_{t=2}^{T}\overline{X}_tX_{j,t-1}-N\frac{\overline{Z}_T}{T}\frac{Z_{j,T}}{T}.
\end{align*}
Hence,
\begin{equation*} 
\| \sigma^{\rm ag}-\widehat{\sigma}^{\rm ag}\|_{\infty}=N\max_{j\in [N]}\left|\frac{1}{T-1}\sum_{t=2}^{T}\overline{X}_tX_{j,t-1}-\E_{\theta}(\overline{X_{1}}X_{j,0})+\overline{m^N}m^N_j-\frac{\overline{Z}_T}{T}\frac{Z_{j,T}}{T}\right|.
\end{equation*}
By the triangle inequality and the facts that $\overline{m}^N\leq 1$ and $T^{-1}Z_{j,T} \leq 1$, we have that 
$$
\max_{j\in [N]}\left|\overline{m^N}m^N_j-\frac{\overline{Z}_T}{T}\frac{Z_{j,T}}{T}\right|\leq \max_{j\in [N]}\left|m^N_j-\frac{Z_{j,T}}{T}\right| + \left|\overline{m^N}-\frac{\overline{Z}_T}{T}\right|,
$$
which together with the previous inequality gives
\begin{multline}
\label{proof_main_result_3_ineq_1}
\|\sigma^{\rm ag}-\widehat{\sigma}^{\rm ag}\|_{\infty}\leq N\max_{j\in [N]}\left|\frac{1}{T-1}\sum_{t=2}^{T}\overline{X}_tX_{j,t-1}-\E_{\theta}(\overline{X_{1}}X_{j,0})\right|
\\+N\max_{j\in [N]}\left|m^N_j-\frac{Z_{j,T}}{T}\right|+N\left|\overline{m^N}-\frac{\overline{Z}_T}{T}\right|.
\end{multline}

Now, we use Theorem 3 of \citep{ost2020sparse} to control each terms in the right hand side above. 

First, for each $j\in [N]$, let $f_j:\{0,1\}^N\to \{0,1\}$ be the projection on the $j$-th coordinate: $f_j(x)=x_j$ and denote
\begin{equation*}
    V(f_j) = \frac{1}{T}\sum_{t=1}^T(f_j(X_t)-\E_{\theta}(f_j(X_t))) = \frac{Z_{j,T}}{T} - m^N_j.    
\end{equation*}
By Theorem 3 of \citep{ost2020sparse} with $m=M=1$, ${\cal F}=\{f_j: j
\in [N]\}$, $F=[N]$ and $\log(1/(1-\lambda))/2$  as their $\theta$, we have that there exists constants $C_1,C_2>0$ depending only on $\lambda$ such that for all realization of $\theta\in\{0,1\}^{N\times N},$
\begin{equation}
\label{proof_main_result_3_ineq_2}    
\P_{\theta}\left(\cup_{j\in [N]}\left\{|V(f_j)|>\sqrt{C_1\frac{(1+\log(TN))\xi}{T}}\right\}\right)\leq C_2T^{-1}+2Ne^{-\xi},
\end{equation}

for all $\xi>0$ and $T$ such that
$$
T\geq 1+\frac{2}{\log((1-\lambda)^{-1})}(2\log(T)+\log(N)).
$$
Since, $\left|\overline{m^N}-\frac{\overline{Z}_T}{T}\right|\leq \max_{j\in [N]}|Z_{j,T}/T-m^N_j|$, Inequality \eqref{proof_main_result_3_ineq_2} implies that 

\begin{equation}
\label{proof_main_result_3_ineq_3}
\P_{\theta}\left(\left|\overline{m^N}-\frac{\overline{Z}_T}{T}\right|\leq \sqrt{C_1\frac{(1+\log(TN))\xi}{T}}\right)\geq 1-( C_2T^{-1}+2Ne^{-\xi}),   \end{equation}
for all realization of $\theta\in\{0,1\}^{N\times N}$.

Second, let us define $g_j:\{0,1\}^N\times \{0,1\}^N \to \{0,1\}$ for each $j\in [N]$ as $g_j(x,y)=x_j\overline{y}$ and denote
\begin{align*}
    W(g_j)&=\frac{1}{T-1}\sum_{t=2}^T(g_j(X_{t-1},X_t)-\E_{\theta}(g_j(X_{-1},X_{t})))\\
    &=\frac{1}{(T-1)}\sum_{t=2}^T\left(\overline{X}_tX_{j,t-1}-\E_{\theta}(\overline{X}_1X_{j,0})\right).
\end{align*}
By Theorem 3 of \citep{ost2020sparse} with $M=1$, $m=2$, ${\cal F}=\{g_j: j\in [N]\}$, $F=[N]$  and $\log(1/(1-\lambda))/2$  as their $\theta$, there exist constants $C^{\prime}_1,C^{\prime}_4>0$ depending only on $\lambda$ such that
\begin{equation}
\label{proof_main_result_3_ineq_4}
\P_{\theta}\left(\cup_{j\in [N]}\left\{W(g_j)\leq \sqrt{C^{\prime}_1\frac{(1+\log(TN))\xi}{T}}\right\}\right)\leq 1-(C^{\prime}_2T^{-1}+2Ne^{-\xi}),
\end{equation}
for all realization of $\theta\in\{0,1\}^{N\times N}$, $\xi>0$ and $T$ such that
$$
T\geq 2+\frac{2}{\log((1-\lambda)^{-1})}(2\log(T)+\log(N)).
$$

Putting together Equations \eqref{proof_main_result_3_ineq_1}-\eqref{proof_main_result_3_ineq_4} with $\xi=\log(TN)$, we show that the conclusion of the theorem is true for all $N\geq 2$ and $T\geq C_3 \log(TN)$ for some $C_3$ depending only on $\lambda$, with $K_i$ depending on $C_i$ and $C^{\prime}_i$, for each $i\in\{1,2\}$, which in turn implies that $K_i$ depends only on $\lambda$.
Hence, to conclude the proof, it remains to show that the conclusion of the theorem is also true for all $N\geq 2$ and $T\geq K_3 \log(N)$ for some $K_3$ depending only on $\lambda$. To see that, take $T_0=T_0(C_3)$ such that $T/2\geq C_3\log(T)$ for all $T\geq T_0$, define $K_3=(T_02C_3)/\log(2)$, and observe that if $T\geq K_3\log(N)>T_0$ then $T=T/2+T/2\geq C_3\log(T)+(K_3/2)\log(N)>C_3\log(T)+(T_0C_3/\log(2))\log(N)>C_3\log(T)+C_3\log(N)$.   
\end{proof}

\section{Proof of Proposition \ref{prop:concentration:asymp_approximation_simulta_cov_matrix}}
\label{sec:proof:simult:cov:matrix}

\subsection{Vectorization and Kronecker product}

Two mathematical objects are known to be very useful to deal with Stein-type matrix equations: vectorization and Kronecker product. Here are their definition and useful properties.

The {\it vectorization} $\vec(M)$ of a $N$-by-$N$ matrix $M$ is the $N^2$-dimensional vector constructed column by column. More precisely, $M_{ij}$ associated with its $k$-th coordinate, where $k=i+(j-1)N$ with $i,j\in [N]$.
With slight abuse of notation, will write $\vec(M)_{ij}$ to refer to the coordinate $k=i+(j-1)N$ of the vector $\vec(M)$.
Recall that the $\vec$ operator is an invertible linear operator on the set of all matrices with real value entries.
Given two $N$-by-$N$ matrices $A$ and $B$, we denote $A\bigotimes B$ the {\it Kronecker product} between $A$ and $B$. It has the following block representation:
$$
{A} \otimes  {B} ={\begin{bmatrix}A_{11} {B} &\cdots &A_{1n} {B} \\\vdots &\ddots &\vdots \\A_{m1} {B} &\cdots &A_{mn} {B} \end{bmatrix}}.
$$
More precisely, this is a $N^2$-by-$N^2$ matrix having value $A_{ik}B_{j\ell}$ associated with entry $(u,v)$, where $u=N(i-1)+j$ and $v=N(k-1)+\ell$  with $i,j,k,\ell \in [N]$. With slight abuse of notation, will write $(A\bigotimes B)_{ij,k\ell}$ to refer to the entry $(u,v)=(N(i-1)+j,N(k-1)+\ell)$ of the matrix $A\bigotimes B$.

Their most useful property regarding matrix equations is that, for all $N$-by-$N$ matrices $A,B,M$,
\begin{equation}\label{eq:property:vec:kronecker}
    \vec(AMB) = \left( B^\top \bigotimes A \right) \vec(M).
\end{equation}
Furthermore, remark that $\vvvert M \vvvert_{\max} = \|\vec(M)\|_{\infty}$.

\begin{proof}[Proof of Proposition \ref{prop:concentration:asymp_approximation_simulta_cov_matrix}]
The main idea is to apply the vectorization operator to the matrix equation of Proposition \ref{prop:Sigma:0:matrix:equation}. In order to emphasize the difference between vectors of size $N$ (for instance $m^N$, $v^N$) or size $N^2$ (for instance $\vec(\Sigma^{(0)})$), we use bold notation for the latter. The same convention is used for $N^2$-by-$N^2$ matrices. 
Let us then introduce the notation:
\begin{equation}
    \mathbf{\Sigma^{(0)}} := \vec(\Sigma^{(0)}), \ 
    \mathbf{v}^N := \vec(\diag(v^N)), \  
    \modif{\mathbf{1}^N := \vec(\diag(1_N))}, \  
    \mathbf{a}^N := \vec(a^N)
\end{equation}
and $\mathbf{I}$ denotes the $N^2$-by-$N^2$ identity matrix. Obviously, the abuse of notation described above for the coordinates of the vectorization and Kronecker product is inherited to the bold notation introduced here.

According to Equation \eqref{eq:property:vec:kronecker}, the starting idea to tackle the matrix equation satisfied by $\Sigma^{(0)}$ is to consider its vectorization and the Kronecker product $(1-\lambda)^2 (A^N\bigotimes A^N)$. Here is a slight modification in order to tackle the $\d0$ operator. 
Let $\mathbf{L}=(\mathbf{L}_{ij,k\ell})_{i,j,k,\ell\in [N]}$ be the $N^2$-by-$N^2$ matrix defined as
$$
\mathbf{L}_{ij,k\ell}=
    \begin{cases}
    (1-\lambda)^2 (A^N\bigotimes A^N)_{ij,k\ell}, \ \text{if} \ i\neq j, \ k,\ell \in [N]\\
    0, \ \text{otherwise}.
    \end{cases}
$$
In particular, remark that 
\begin{equation}\label{eq:L:diag:property}
    \forall \mathbf{x}\in \mathbb{R}^{N^2},\, i\in [N],\quad [\mathbf{L} \mathbf{x}]_{ii} = 0.
\end{equation}
Moreover, $\vvvert \mathbf{L} \vvvert_\infty \leq (1-\lambda)^2 < 1$ so that $\mathbf{I} - \mathbf{L}$ is invertible. Finally, remark that the expectation of $\mathbf{L}$ is given by
$$
\E\left[ \mathbf{L}_{ij,k\ell} \right]=
    \begin{cases}
    ((1-\lambda)pN^{-1})^2 , \ \text{if} \ i\neq j, \ (k,\ell) \in \mathcal{P}_+^2 \cup \mathcal{P}_-^2,\\
    -((1-\lambda)pN^{-1})^2 , \ \text{if} \ i\neq j, \ (k,\ell) \notin \mathcal{P}_+^2 \cup \mathcal{P}_-^2,\\
    0, \ \text{otherwise}.
    \end{cases}
$$

\smallskip
{\bf Step 1.} Here we prove that
$$
\mathbf{\Sigma^{(0)}}=(\mathbf{I} - \mathbf{L})^{-1} \mathbf{v}^N = \sum_{k=0}^{\infty} \mathbf{L}^k \mathbf{v}^N.
$$

\smallskip
{\it Proof of Step 1.} By Proposition \ref{prop:Sigma:0:matrix:equation} and linearity of $\vec$, we have
\begin{equation}
\label{eq_1_prof_prop:asymp_approximation_simulta_cov_matrix}
\mathbf{\Sigma^{(0)}} = (1-\lambda)^2 \vec\left(\d0\left( A^N\Sigma^{(0)}(A^N)^{\top} \right)\right) + \mathbf{v}^N.
\end{equation}
On the one hand, observe that for any $i,j\in [N]$ with $i\neq j$,
\begin{align*}
    (1-\lambda)^2 \vec\left(\d0\left( A^N\Sigma^{(0)}(A^N)^{\top} \right)\right)_{ij}
    &= (1-\lambda)^2 [A^N\Sigma^{(0)}(A^N)^{\top}]_{ij}\\
    &=(1-\lambda)^2\sum_{k=1}^N\sum_{\ell=1}^NA^N_{ik}\Sigma^{(0)}_{k\ell}A^N_{j\ell}\\
    &=\sum_{k=1}^N\sum_{\ell=1}^N \mathbf{L}_{ij,k\ell} \mathbf{\Sigma^{(0)}}_{k\ell} = [\mathbf{L} \mathbf{\Sigma^{(0)}}]_{ij}.
\end{align*}
On the other hand, for $i=j$, the definition of $\d0$ implies
$$
(1-\lambda)^2 \vec\left(\d0\left( A^N\Sigma^{(0)}(A^N)^{\top} \right)\right)_{ii} = 0 = [\mathbf{L} \mathbf{\Sigma^{(0)}}]_{ii},
$$
where the second equality comes from \eqref{eq:L:diag:property}. All in all, we have proved that
$$
\mathbf{\Sigma^{(0)}} = \mathbf{L} \mathbf{\Sigma^{(0)}} + \mathbf{v}^N,
$$
which, in turn, concludes Step 1.

\medskip
The rest of the proof is devoted to the study of the iterates of the form $\mathbf{L}^k \mathbf{v}^N$. Since we expect that $\mathbf{L}$ is close to $\E[\mathbf{L}]$ and $\mathbf{v}^N$ is close to $m(1-m)\mathbf{1}^N$, let us look at the iterates $\E[\mathbf{L}]^k \mathbf{1}^N$ as a preliminary step. It turns out that their study is simple since $\E[\mathbf{L}] \mathbf{1}^N$ is colinear to $\mathbf{a}^N$ and $\mathbf{a}^N$ is an eigenvector of $\E[\mathbf{L}]$ as proven in Step 2 below.

\smallskip
{\bf Step 2.} 
Here we prove that
$$
\E[\mathbf{L}] \mathbf{1}^N = \frac{(1-\lambda)^2p^2}{N} \mathbf{a}^N
\quad \text{and} \quad
\E[\mathbf{L}] \mathbf{a}^N = (1-\lambda)^2p^2 \left( (r_+^N - r_-^N)^2 - N^{-1} \right) \mathbf{a}^N.
$$

\smallskip
{\it Proof of Step 2.} 
Let $i,j\in [N]$. Let us first study $\E[\mathbf{L}] \mathbf{1}^N$. On the one hand, $(\E[\mathbf{L}] \mathbf{1}^N)_{ii} = 0 = \mathbf{a}^N_{ii}$ by \eqref{eq:L:diag:property} and the definition of $\mathbf{a}^N$. On the other hand, if $i\neq j$,
\begin{equation*}
(\E[\mathbf{L}] \mathbf{1}^N)_{ij} = \sum_{k=1}^N\sum_{\ell=1}^N \E\left[ \mathbf{L}_{ij,k\ell} \right] \mathbf{1}^N_{k\ell}
= \sum_{k=1}^N \E\left[ \mathbf{L}_{ij,kk} \right] 
= ((1-\lambda)p)^2N^{-1}.
\end{equation*}

We now turn to $\E[\mathbf{L}] \mathbf{a}^N$. Again, we have $(\E[\mathbf{L}] \mathbf{a}^N)_{ii} = 0 = \mathbf{a}^N_{ii}$. To consider the terms with $i\neq j$, let us denote
\begin{itemize}
    \item $k \leftrightarrow \ell$ if and only if $(k,\ell) \in \mathcal{P}_+^2 \cup \mathcal{P}_-^2$ and $k\neq \ell$,
    \item $k \not\leftrightarrow \ell$ if and only if $(k,\ell) \notin \mathcal{P}_+^2 \cup \mathcal{P}_-^2$.
\end{itemize}
Remark that the two relations above are not complementary. Indeed, $k=\ell$ does not satisfy $k \leftrightarrow \ell$ nor $k \not\leftrightarrow \ell$. The cardinality of the sets $\{k \leftrightarrow \ell\}$ and $\{k \not\leftrightarrow \ell\}$ are respectively $N^2[(r_+^N)^2 + (r_-^N)^2 - 1/N]$ and $2 N^2 r_+^N r_-^N$. Hence, if $i\neq j$, we have
\begin{eqnarray*}
    (\E[\mathbf{L}] \mathbf{a}^N)_{ij} &=& \sum_{(k,\ell)\in [N]^2: \ \ell \neq k} \E[\mathbf{L}_{ij,k\ell}] = 
    \frac{(1-\lambda)^2p^2}{N^2} N^2 \left([(r_+^N)^2 + (r_-^N)^2 - 1/N] - 2  r_+^N r_-^N\right)\\
    & = & (1-\lambda)^2p^2 \left( (r_+^N - r_-^N)^2 - N^{-1} \right)
\end{eqnarray*}
which concludes the proof of Step 2.

\medskip
According to Step 2 and the paragraph above it, we expect that the two following quantities are negligible:
\begin{equation*}
    \boldsymbol{\varepsilon}^{N,1} := \mathbf{L}\mathbf{v}^N - \frac{(1-\lambda)^2p^2m(1-m)}{N} \mathbf{a}^N
    \quad \text{and} \quad
    \boldsymbol{\varepsilon}^{N,2} := \mathbf{L}\mathbf{a}^N - D^2 \mathbf{a}^N,
\end{equation*}
where we remind that $D = (1-\lambda)p (r_+ - r_-)$.

\smallskip
{\bf Step 3.} Here we prove that there exists a constant $K>0$ depending only on $\lambda$ such that for all $\delta\in (0,1]$ and $N\geq 2$,
\begin{equation*}        
    \P\left( \| \boldsymbol{\varepsilon}^{N,1} \|_{\infty} \geq K  \sqrt{\frac{\log(N/\delta)}{N^3}} \right) \leq \delta.
\end{equation*}
\smallskip
{\it Proof of Step 3.} 
According to Step 2, we write and decompose $\boldsymbol{\varepsilon}^{N,1}$ as
\begin{equation*}
    \boldsymbol{\varepsilon}^{N,1} = \mathbf{L}\mathbf{v}^N - \E[\mathbf{L}](m(1-m)\mathbf{1}^N) = \mathbf{L}(\mathbf{v}^N - m(1-m)\mathbf{1}^N) + (\mathbf{L} - \E[\mathbf{L}]) (m(1-m)\mathbf{1}^N) 
\end{equation*}
On the one hand, since $(\mathbf{L}\mathbf{1}^N)_{ii} = (\E[\mathbf{L}]\mathbf{1}^N)_{ii} = 0$, we have 
\begin{eqnarray*}
    \| (\mathbf{L} - \E[\mathbf{L}]) (m(1-m)\mathbf{1}^N) \|_{\infty} 
    &=& \max_{i\neq j} \left| \sum_{k=1}^{N} \mathbf{L}_{ij,kk} - \E[\mathbf{L}_{ij,kk}] \right|\\ 
    &=& \frac{(1-\lambda)^2}{N} \max_{i\neq j} \left| \frac{1}{N} \sum_{k=1}^{N} \theta_{ik}\theta_{jk} - p^2 \right|.
\end{eqnarray*}
By Equation~\eqref{eq:Hoeffding:theta:2},
\begin{equation}\label{eq:concentration:step4}
    \P\left( \| (\mathbf{L} - \E[\mathbf{L}]) (m(1-m)\mathbf{1}^N) \|_{\infty} \geq 2(1-\lambda)^2 \sqrt{\frac{\log(4N/\delta)}{2N^3}} \right) \leq \delta.
\end{equation}

On the other hand, since $(\mathbf{L}\mathbf{v}^N)_{ii} = (\mathbf{L}\mathbf{1}^N)_{ii} = 0$, we have 
\begin{eqnarray*}
    \| \mathbf{L}(\mathbf{v}^N - m(1-m)\mathbf{1}^N) \|_{\infty} &=& \max_{i\neq j} \left| \sum_{k=1}^{N} \mathbf{L}_{ij,kk} \left( v_k^N - (m(1-m)) \right)\right| \\
    &\leq& \frac{(1-\lambda)^2}{N^2} N \| v^N - m(1-m)1_N \|_{\infty}.
\end{eqnarray*}
Using \eqref{eq:concentration:v:main} and gathering with~\eqref{eq:concentration:step4} conclude the proof of this step. 

\smallskip
{\bf Step 4.} Here we prove that there exists a constant $K>0$ depending only on $\lambda$ such that for all $\delta\in (0,1]$ and $N\geq 2$,
\begin{equation*}        
    \P\left( \| \boldsymbol{\varepsilon}^{N,2} \|_{\infty} \geq K \sqrt{\frac{\log(N/\delta)}{N}}\right) \leq \delta.
\end{equation*}
\smallskip
{\it Proof of Step 4.}
We decompose $\boldsymbol{\varepsilon}^{N,2}$ as
\begin{equation*}
    \boldsymbol{\varepsilon}^{N,2} = (\mathbf{L} - \E[\mathbf{L}]) \mathbf{a}^N + \left( \E[\mathbf{L}] \mathbf{a}^N - D^2 \mathbf{a}^N \right).
\end{equation*}

On the one hand, using Step 2, it is easy to check that
\begin{equation}\label{eq:EL-D2:aN}
    \| \E[\mathbf{L}] \mathbf{a}^N - D^2 \mathbf{a}^N \|_{\infty} = \left| (r_+^N - r_-^N)^2 - N^{-1} - (r_+ - r_-)^2 \right| \leq KN^{-1}.
\end{equation}

On the other hand, remind the notation $k \leftrightarrow \ell$ and $k \not\leftrightarrow \ell$ introduced in the proof of Step 2, and check that
\begin{eqnarray*}
    \| (\mathbf{L} - \E[\mathbf{L}]) \mathbf{a}^N \|_{\infty} = (1-\lambda)^2 \max_{i\neq j} \left| \frac{1}{N^2} \sum_{k \leftrightarrow \ell} (\theta_{ik}\theta_{j\ell} - p^2) - \frac{1}{N^2} \sum_{k \not\leftrightarrow \ell} (\theta_{ik}\theta_{j\ell} - p^2) \right|.
\end{eqnarray*}
By definition of $k \leftrightarrow \ell$, we have
\begin{align*}
    \sum_{k \leftrightarrow \ell} (\theta_{ik}\theta_{j\ell} - p^2) &= \sum_{(k,\ell) \in \mathcal{P}_+^2} (\theta_{ik}\theta_{j\ell} - p^2) + \sum_{(k,\ell) \in \mathcal{P}_-^2} (\theta_{ik}\theta_{j\ell} - p^2) - \sum_{k=1}^{N} (\theta_{ik}\theta_{jk} - p^2),\\
    \sum_{k \not\leftrightarrow \ell} (\theta_{ik}\theta_{j\ell} - p^2) &= \sum_{(k,\ell) \in \mathcal{P}_+ \times \mathcal{P}_-} (\theta_{ik}\theta_{j\ell} - p^2) + \sum_{(k,\ell) \in \mathcal{P}_- \times \mathcal{P}_+} (\theta_{ik}\theta_{j\ell} - p^2).
\end{align*}
Using Lemma~\ref{lem:Hoeffding:theta} on the five terms in the right hand side above and gathering with~\eqref{eq:EL-D2:aN} conclude the proof of this step
(after adjusting the constant $K$ to get a simpler form for the lower bound).

\medskip
We are now in position to end the proof of the proposition. Let us write
\begin{equation*}
    \mathbf{L}\mathbf{v}^N = \frac{\zeta}{N} \mathbf{a}^N + \boldsymbol{\varepsilon}^{N,1} \quad \text{and} \quad \mathbf{L}\mathbf{a}^N = D^2 \mathbf{a}^N + \boldsymbol{\varepsilon}^{N,2},
\end{equation*}
where $\zeta =(1-\lambda)^2p^2m(1-m)$. By standard linear algebra, we have $\mathbf{L}^0 \mathbf{v}^N = \mathbf{v}^N$, and by induction for all $k\geq 2$,
\begin{equation*}
    \mathbf{L}^k \mathbf{v}^N = \frac{\zeta}{N} \left( D^{2(k-1)}\mathbf{a}^N + \sum_{\ell = 0}^{k-2} D^{2\ell} \mathbf{L}^{(k-2)-\ell} \boldsymbol{\varepsilon}^{N,2} \right) + \mathbf{L}^{k-1} \boldsymbol{\varepsilon}^{N,1}.
\end{equation*}
In particular, denoting $\nu = \max\{D^2, 1-\lambda\} < 1$ and reminding that $\vvvert \mathbf{L} \vvvert_{\infty} \leq (1-\lambda)^2$, one has for all $k\geq 1$,
\begin{equation}
\label{ineq:behavior_of_constant_K}
    \left\| \mathbf{L}^k \mathbf{v}^N - \frac{\zeta}{N} D^{2(k-1)}\mathbf{a}^N \right\|_{\infty} \leq \zeta (k-1) \nu^{k} \frac{\| \boldsymbol{\varepsilon}^{N,2} \|_{\infty}}{N} + (1-\lambda)^{k-1} \| \boldsymbol{\varepsilon}^{N,1} \|_{\infty}.
\end{equation}
To conclude the proof, it suffices to remind the result of Step 1, remark that $\sum_{k=1}^{\infty} D^{2(k-1)} = 1/(1 - D^2)$, use the summability of the sequence $(k \nu^{k})_{k\geq 0}$ and the inequalities of Steps 3 and 4.
\end{proof}

\section{Convergence of $\widehat{\Sigma}^{(1)}$}
\label{sec:proof:of:thm:error:between:empirical:Sigma1:and:Sigma1}

The goal of this section is to prove the following result.

\begin{theorem}
\label{thm:error:between:empirical:Sigma1:and:Sigma1}
Let $\Sigma^{(1)}$ be the matrix defined in \eqref{def:lag_1_Cov_matrix_signed_random_enviroment} and $\widehat{\Sigma}^{(1)}$ its empirical estimate defined in \eqref{def:empirical:Sigma1}. 
There exist positive constants $K_1$, $K_2$ and $K_3$ depending only on $\lambda$ such that 
\begin{equation}
\label{rate:conv:empirical:ave:cross:cov}
\P_{\theta}\Bigg(\vvvert \widehat{\Sigma}^{(1)}-\Sigma^{(1)}\vvvert_2 \geq K_1\sqrt{\frac{N\log(T)\log(N\log(T))}{T}}\Bigg) \leq K_2\Big(\sqrt{\frac{N}{T}}+\frac{1}{N\log(T)}\Big),   
 \end{equation}
for all $\theta\in\{0,1\}^{N\times N}$, as long as the pair $(N,T)$ satisfies $N\geq 2$ and 
\begin{equation*}
T\geq (K_3N\log(T)\log(N\log(T)))\vee 2.
\end{equation*}
\end{theorem}
 An important feature of this result is that, like Theorem \ref{thm:conv:hat:sigma:ag:towards:sigma:ag}, it also holds for all realizations of the random matrix $\theta$. By integrating \eqref{rate:conv:empirical:ave:cross:cov} with respect to $\theta$, we deduce that the conclusion of Theorem  \ref{thm:error:between:empirical:Sigma1:and:Sigma1} is also valid for the probability measure $\P$, under which the environment is random.

The starting point of the proof of Theorem \ref{thm:error:between:empirical:Sigma1:and:Sigma1} is the following observation.
Let $\Tilde{\Sigma}^{(1)}$ be the matrix defined as
\begin{equation}
\label{def:empirical:Sigma1:centered}
\Tilde{\Sigma}^{(1)}=\frac{1}{T}\sum_{t=1}^{T}(X_{t+1}-m^N)(X_{t}-m^N)^{\top},
\end{equation} 
where $m^N$ is defined in \eqref{eq:def:mean:vector}. The matrix $\Tilde{\Sigma}^{(1)}$ can be seen as an intermediate matrix between the covariance matrix $\Sigma^{(1)}$ and its estimator $\widehat{\Sigma}^{(1)}$. Then, observe that 
\begin{equation}
\label{eq1:proof:thm:error:between:empirical:Sigma1:and:Sigma1}
\vvvert \widehat{\Sigma}^{(1)}-\Sigma^{(1)}\vvvert_2\leq \vvvert \widehat{\Sigma}^{(1)}-\tilde{\Sigma}^{(1)}\vvvert_2 + \vvvert \tilde{\Sigma}^{(1)}-\Sigma^{(1)}\vvvert_2. 
\end{equation}
Hence, the proof of Theorem \ref{thm:error:between:empirical:Sigma1:and:Sigma1} reduces to controlling each one of the terms on the RHS of \eqref{eq1:proof:thm:error:between:empirical:Sigma1:and:Sigma1}.

The first term on the RHS of \eqref{eq1:proof:thm:error:between:empirical:Sigma1:and:Sigma1} is controlled by bounding the difference between $\widehat{m}$ and the (conditional) mean vector $m^N$. This is done in Lemma \ref{lem:sigmahat:minus:sigmatilde:in:operator:norm}. 

To control the second term, 
the proof follows a similar strategy to that of Theorem \ref{thm:conv:hat:sigma:ag:towards:sigma:ag}.
Nevertheless, a direct application of \cite[Theorem 4]{ost2020sparse} would lead to a looser upper-bound here.
Indeed, their result relies on Hoeffding inequality, which requires no assumption on the covariances. Bernstein inequality is sharper in our case, since we know that the covariances are of order $N^{-1}$ (Lemma \ref{lem:covariance:produit}). Adapting their proof to our framework leads to Lemma \ref{lem:sigmatilde:minus:sigma:in:operator:norm} below. Finally, Lemma \ref{lemma:bound:on:sigma} is used to control a variance term appearing in Lemma \ref{lem:sigmatilde:minus:sigma:in:operator:norm}.

\begin{lemma}
\label{lem:sigmahat:minus:sigmatilde:in:operator:norm}
There exists a constant $K>0$ depending on $\lambda$ such that the following bounds hold for all $\theta\in\{0,1\}^{N\times N}$:
\begin{gather}
\E_{\theta}\left[\|\widehat{m}-m^N\|^2_2\right]\leq K\frac{N}{T}.
     \label{eq:limit:l2_m}\\
  \P_{\theta}\Bigg(\vvvert \widehat{\Sigma}^{(1)}-\tilde{\Sigma}^{(1)}\vvvert_2\geq \sqrt{\frac{N}{T}}\Bigg)\leq K\sqrt{\frac{N}{T}}.
  \label{eq:control:hatsigma:minus:tilde:sigma:opNorm}
\end{gather}
   
\end{lemma}

\begin{proof}
The proof of \eqref{eq:limit:l2_m} follows from the observation that
\begin{align*}
 \E_{\theta}[\|\widehat{m}-m^{N}\|_2^2]&=\sum_{i=1}^N\E_{\theta}[|\widehat{m}_i-m^{N}_i|_2^2]\\
 &=\sum_{i=1}^N\text{Var}_{\theta}\Big(\frac{1}{T}\sum_{t=1}^TX_{i,t}\Big)=\frac{1}{T^2}\sum_{i=1}^N\sum_{t=1}^T\sum_{s=1}^T\text{Cov}_{\theta}\Big(X_{i,t},X_{i,s}\Big)
 \end{align*}
and \ref{lem:covariance:produit}.
 
 For any vector $u\in\R^N$, $u(T^{-1}\sum_{t=1}^TX_t-\widehat{m})^{\top}=u(\widehat{m}-\widehat{m})^{\top}=0_{N\times N}$, where $0_{N\times N}$ is the matrix with all entries equal to 0, so that we can write
 \begin{align*}
 \widehat{\Sigma}^{(1)}&=\frac{1}{T}\sum_{t=1}^T(X_{t+1}-\widehat{m})(X_{t}-\widehat{m})^{\top}\\
 &=\frac{1}{T}\sum_{t=1}^T(X_{t+1}-m^N)(X_{t}-\widehat{m})^{\top}+(m^N-\widehat{m})\Big(\frac{1}{T}\sum_{t=1}^{T}X_{t}-\widehat{m}\Big)^{\top}\\
 &=\frac{1}{T}\sum_{t=1}^T(X_{t+1}-m^N)(X_{t}-m^N)^{\top}+\Big(\frac{1}{T}(X_{T+1}-X_1)+(\widehat{m}-m^N)\Big)(m^N-\widehat{m})^{\top}\\
 &=\tilde{\Sigma}^{(1)}+\frac{1}{T}(X_{T+1}-X_1)(m^N-\widehat{m})^{\top}-(\widehat{m}-m^N)(\widehat{m}-m^N)^{\top}.
 \end{align*}
As a consequence, we obtain that 
\begin{equation}
\label{eq2:proof:thm:error:between:empirical:Sigma1:and:Sigma1}
\vvvert\widehat{\Sigma}^{(1)}-\tilde{\Sigma}^{(1)}\vvvert_2\leq \frac{1}{T}\|X_{T+1}-X_1\|_2\|m^N-\widehat{m}\|_2+\|m^N-\widehat{m}\|_2^2.    
\end{equation}

By combining \eqref{eq:limit:l2_m}, \eqref{eq2:proof:thm:error:between:empirical:Sigma1:and:Sigma1} with the chain of inequalities $\|X_{T+1}-X_{1}\|_2\leq \|X_{T+1}\|_2+\|X_{1}\|_2\leq 2\sqrt{N}$ and then using Jensen inequality, we obtain for all $\theta\in\{0,1\}^{N\times N}$, that
\begin{align*}
\E_{\theta}\left[\vvvert\widehat{\Sigma}^{(1)}-\tilde{\Sigma}^{(1)}\vvvert_2\right]&\leq \frac{2\sqrt{N}}{T}\sqrt{\E_{\theta}\left[ \|\widehat{m}-m^N\|^2_2\right]}+\E_{\theta}\left[ \|\widehat{m}-m^N\|^2_2\right]\\
&\leq \frac{2\sqrt{K}N}{T^{3/2}}+K\frac{N}{T}\leq \frac{N}{T}(2\sqrt{K}+K).
\end{align*}
The inequality above combined with Chebyshev inequality implies Inequality \eqref{eq:control:hatsigma:minus:tilde:sigma:opNorm}, concluding the proof.
\end{proof}

Combining a coupling result (Lemma \ref{lemma:coupling}) with Matrix Bernstein inequality, we can show that $\Tilde{\Sigma}^{(1)}$ and $\Sigma^{(1)}$ are also uniformly close in the operator norm.

\begin{lemma}
\label{lem:sigmatilde:minus:sigma:in:operator:norm}
Take $1<\renewaltime\leq T$. For all $\theta\in\{0,1\}^{N\times N}$ and $\zeta>0$, the following inequality holds:
\begin{equation*}
 \P_{\theta}\Big(\vvvert \tilde{\Sigma}^{(1)}-\Sigma^{(1)}\vvvert_2>\frac{2N\renewaltime }{T}+\zeta\Big)\leq TN(1-\lambda)^{\renewaltime -2}+2N\renewaltime \exp\left\{-\frac{\lfloor T/\renewaltime \rfloor \zeta^2}{2(\sigma_{\theta}+2N\zeta/3)}\right\},   
\end{equation*}
where $\sigma_{\theta}=\sigma_{\theta,0}\vee \sigma_{\theta,1} $ and for $t\in\{0,1\}$,
\begin{equation*}
\sigma_{\theta,t}=\vvvert\E_{\theta}\Big[\|X_t-m^N\|_2^2(X_{1-t}-m^N)(X_{1-t}-m^N)^{\top}\Big]-(G_0 1_{\{t=0\}} + G_1 1_{\{t=1\}})\vvvert_2,  
\end{equation*}
with $G_0= \Sigma^{(1)}(\Sigma^{(1)})^{\top}$ and $G_1= (\Sigma^{(1)})^{\top}\Sigma^{(1)}$.
\end{lemma}

The proof of Lemma \ref{lem:sigmatilde:minus:sigma:in:operator:norm} is provided in Appendix \ref{sec:coupling:concentration}. 
The lemma below is used to control the constant $\sigma_\theta$ appearing in Lemma \ref{lem:sigmatilde:minus:sigma:in:operator:norm}.

\begin{lemma}
\label{lemma:bound:on:sigma}
For each $\theta\in\{0,1\}^{N\times N}$, let $\sigma_{\theta}$ be as in Lemma \ref{lem:sigmatilde:minus:sigma:in:operator:norm}. 
Then, there exists a constant $K>0$ depending only on $\lambda$ such that $\sigma_{\theta}\leq K N$ for all $\theta\in\{0,1\}^{N\times N}$. 
\end{lemma}
\begin{proof}
We need to prove that there exists a constant $K>0$ such that $\sigma_{\theta,t}\leq KN$ for all $\theta\in\{0,1\}^{N\times N}$ and $t\in \{0,1\}$. We will only prove that $\sigma_{\theta,0}\leq KN$ for all $\theta\in\{0,1\}^{N\times N}$ for some suitable constant $K>0$. The analysis for the case $t=1$ is very similar. 

In the rest of the proof, let $Y_{i,s}=X_{i,s}-\E_{\theta}[X_{i,s}]$ for $i\in [N]$ and $s\in\Z$. Also, denote $G=G_0$.

For all $i,j\in [N]$, observe that 
\[
\mathbb{E}_{\theta}\big[\|X_0-m^N\|_2^2(X_1-m^N)(X_1-m^N)^{\top}_{ij}\big]-G^{\top}_{ij}=\sum_{k=1}^N\text{Cov}_{\theta}[Y_{k,0}Y_{i,1},Y_{k,0}Y_{j,1}]. 
\]
By Items 2 and 5 of \ref{lem:covariance:produit}, we then deduce that for all $i\neq j$,
\begin{align}
\label{eq:1:proof:lemma:bound:on:sigma}
\vert\mathbb{E}_{\theta}\big[\|X_0-m^N\|_2^2(X_1-m^N)(X_1-m^N)^{\top}_{ij}\big]-G^{\top}_{ij}\vert &\leq \sum_{k=1}^N\vert \text{Cov}_{\theta}[Y_{k,0}Y_{i,1},Y_{k,0}Y_{j,1}]\vert  \nonumber\\
&\leq N\Big(\frac{K}{N}\Big)=K,    
\end{align}
for some constant $K>0$ depending only on $\lambda$, and that for all $i\in [N]$,
\begin{equation}
\label{eq:2:proof:lemma:bound:on:sigma}
|d_i|\leq N, \ \text{where} \ d_i:= \mathbb{E}_{\theta}\big[\|X_0-m^N\|_2^2(X_1-m^N)(X_1-m^N)^{\top}_{ii}\big]-G^{\top}_{ii}.    
\end{equation}
Let $d=(d_1,\ldots,d_N)$ and define $D=\text{Diag}(d)$. It follows from \eqref{eq:1:proof:lemma:bound:on:sigma} and \eqref{eq:2:proof:lemma:bound:on:sigma} that 
\begin{multline}
\label{eq:3:proof:lemma:bound:on:sigma}
\sigma_{\theta,0}=\vvvert\mathbb{E}_{\theta}\big[\|X_0-m^N\|_2^2(X_1-m^N)(X_1-m^N)^{\top}\big]-G^{\top}\vvvert_2\leq \vvvert D\vvvert_2\\
+\vvvert d_0(\mathbb{E}_{\theta}\big[\|X_0-m^N\|_2^2(X_1-m^N)(X_1-m^N)^{\top})\vvvert_2\leq \|d\|_{\infty}
\\
+N\vvvert d_0(\mathbb{E}_{\theta}\big[\|X_0-m^N\|_2^2(X_1-m^N)(X_1-m^N)^{\top})\vvvert_{\max}\leq N+NK=N(K+1),
\end{multline}
and the result follows. 
\end{proof}

We are now in position to prove Theorem \ref{thm:error:between:empirical:Sigma1:and:Sigma1}.

\begin{proof}[Proof of Theorem \ref{thm:error:between:empirical:Sigma1:and:Sigma1}]

Combining \eqref{eq1:proof:thm:error:between:empirical:Sigma1:and:Sigma1}, \eqref{eq:control:hatsigma:minus:tilde:sigma:opNorm} and Lemmas \ref{lem:sigmatilde:minus:sigma:in:operator:norm} and \ref{lemma:bound:on:sigma}, we obtain that there exists a constant $K>0$ depending only on $\lambda$ such that for all $1<\renewaltime \leq T$, $\zeta>0$ and $\theta\in\{0,1\}^{N\times N},$ it holds that
\begin{multline}
\label{eq3:proof:thm:error:between:empirical:Sigma1:and:Sigma1}
\P_{\theta}\Big(\vvvert \widehat{\Sigma}^{(1)}-\Sigma^{(1)}\vvvert_2\geq \sqrt{\frac{N}{T}}+\frac{2N\renewaltime }{T}+\zeta\Big)\leq \Bigg(K\sqrt{\frac{N}{T}}+TN(1-\lambda)^{\renewaltime -2}\\
+2N\renewaltime \exp\Big\{-\frac{n\zeta^2}{2N(K+2\zeta/3)}\Big\}\Bigg),    
\end{multline}
where $n=\lfloor T/\renewaltime \rfloor.$ 
Let $\renewaltime =2(1+\log(TN)/\log(1-\lambda)^{-1})$,  
and $\zeta=\sqrt{\frac{8KN\log(2N\renewaltime )}{n}}$ and suppose that the pair $(N,T)$ is such that $N\geq 2$ and 
\[
T\geq K_3 N\log(T)\log(N\log(T)) \vee 2,
\]
where $K_3>0$ is a constant satisfying the following properties:
\begin{enumerate}
    \item $K_3 \log(2)\log(2\log(2))\geq 1$.
    \item $K_3 2\log(2)\log(2\log(2))\geq (1-\lambda)^{-1/2}\vee T_1\vee \exp(8/\log((1-\lambda)^{-1}))$,
    where $T_1=\min\{t\geq 1: t\geq (16\log(t))/\log((1-\lambda)^{-1})\}$.
    \item $K_3\geq \frac{1024 K^{3}}{9\log((1-\lambda)^{-1})}$
\end{enumerate}

{\bf Claim.} For such a pair $(N,T)$, one can check that  following properties holds:
\begin{enumerate}[a)]
    \item $T\geq N$.
    \item $\renewaltime \leq 2\renewaltime \leq 16\log(T)/\log((1-\lambda)^{-1}))\leq T$.
   \item $N\log(T)\geq 16/\log((1-\lambda)^{-1})$.
    \item $n\geq T\log((1-\lambda)^{-1})/(16\log(T))$.
    \item $\zeta\leq 3/(2K)$.
\end{enumerate}

\medskip
{\it Proof of the Claim.} Item a) follows immediately from property 1.
Property 2 ensures that $T\geq (1-\lambda)^{-1/2}\vee T_1$. This, in turn, implies that $2\log(T)/\log((1-\lambda)^{-1})\geq 1$. Using this inequality and item a), we then deduce that 
\begin{align*}
\renewaltime \leq 2\renewaltime &\leq 4(1+\frac{1}{\log((1-\lambda)^{-1})}\log(TN))\\
&\leq  4(1+\frac{2}{\log((1-\lambda)^{-1})}\log(T))\leq \frac{16}{\log((1-\lambda)^{-1})}\log(T).   
\end{align*}
Using the definition of $T_1$ and the fact that $T\geq T_1$, we obtain the last inequality of item b).

Item c) follows from the fact that $N\geq 2$ and $T\geq \exp(8/\log((1-\lambda)^{-1}))$. As for item d), it follows from b) and the inequality $\lfloor z\rfloor \geq z/2$ which is valid for all $z\geq 2$. Finally, using first d), then b) and c), we have that 
\begin{align*}
\zeta=\sqrt{\frac{8KN\log(2N\renewaltime )}{n}}&\leq \sqrt{\frac{128 KN \log(T) \log(2N\renewaltime )}{T\log((1-\lambda)^{-1})}}\\
&\leq \sqrt{\frac{128 KN \log(T) \log(\frac{16N\log(T)}{\log((1-\lambda)^{-1})})}{T\log((1-\lambda)^{-1})}}\\
&\leq \sqrt{\frac{256 KN \log(T) \log(N\log(T))}{T\log((1-\lambda)^{-1})}}. 
\end{align*}
Hence, using the fact that $T\geq K_3N\log(T)\log(N\log(T))$ and Property 3, we deduce that 
\[
\zeta\leq \sqrt{\frac{256 KN \log(T) \log(N\log(T))}{T\log((1-\lambda)^{-1})}}\leq \sqrt{\frac{256 K}{\log((1-\lambda)^{-1})K_3}}\leq 3/(2K),
\]
proving e).
\medskip

We now conclude the proof using Claim 1.
By Items b) and e) of Claim 1, we have that $\renewaltime \leq T$ and $\zeta\leq 3/(2K)$, so that  
\begin{align}
\label{eq4:proof:thm:error:between:empirical:Sigma1:and:Sigma1}
K\sqrt{\frac{N}{T}}+TN(1-\lambda)^{\renewaltime -2}
&+2N\renewaltime \exp\Big\{-\frac{n\zeta^2}{2N(K+2\zeta/3)}\Big\}\nonumber\\
&\leq K\sqrt{\frac{N}{T}}+\frac{1}{TN}+\frac{1}{2N\renewaltime }\leq (K+1)\sqrt{\frac{N}{T}}+\frac{\log(1-\lambda)^{-1}}{4N\log(TN)},
\end{align}
where in the second inequality we have also used that $2\log(TN)/\log((1-\lambda)^{-1})\leq \renewaltime $.

Moreover, using the definition of $\renewaltime $, a), b), d) and c), we can also show that
\begin{align*}
\sqrt{\frac{N}{T}}+\frac{2N\renewaltime }{T}+\zeta&\leq  5\sqrt{\frac{N}{T}}+\frac{8}{\log(1-\lambda)^{-1}}\frac{N\log(T)}{T}+\sqrt{\frac{8KN\log(2N\renewaltime )}{n}}\\
&\leq  5\sqrt{\frac{N}{T}}+\frac{8N\log(T)}{T\log(1-\lambda)^{-1}}+\sqrt{\frac{8KN\log(\frac{16N\log(T)}{\log(1-\lambda)^{-1}})16\log(T)}{T\log(1-\lambda)^{-1}}}\\
&\leq  5\sqrt{\frac{N}{T}}+\frac{8N\log(T)}{T\log(1-\lambda)^{-1}}+\sqrt{\frac{256KN\log(N\log(T))\log(T)}{T\log(1-\lambda)^{-1}}}\\
&\leq K_1 \sqrt{\frac{N\log(T)\log(N\log(T))}{T}},   
\end{align*}
where $K_1>0$ depends only on $\lambda$ and $K$. 
Putting together \eqref{eq3:proof:thm:error:between:empirical:Sigma1:and:Sigma1}, \eqref{eq4:proof:thm:error:between:empirical:Sigma1:and:Sigma1} and the above inequality, the result follows. 
\end{proof}

\section{Coupling and Concentration}
\label{sec:coupling:concentration}

The goal of this section is prove the concentration result stated in Lemma \ref{lem:sigmatilde:minus:sigma:in:operator:norm}. To that end, we will use the
coupling result below. In words, we couple our model of interest with a similar Markov chain that independently starts anew every $\renewaltime$ time steps. 
We show that the coupling error vanishes exponentially with respect to $\renewaltime$.

\begin{lemma}
\label{lemma:coupling}
Let $\renewaltime >1$ and denote ${\cal R}=\{3+a\renewaltime : a\in\N\}$. Take $\theta\in\{0,1\}^{N\times N}$ and let $(Z_{t})_{t\in {\cal R}}$ be a sequence of i.i.d random vectors with distribution $\pi_{\theta}$, where  $\pi_{\theta}$ denotes the invariant distribution of the Markov chain with transition probability defined in \eqref{def:transition_prob_1}, conditionally on $\theta$.
Let $(U_{i,t})_{i\in[N],t\geq 2}$ be a sequence of i.i.d random variables uniformly distributed on $[0,1]$, independent of everything else. Let $(X_t,\tilde{X}_t)_{t\geq 1}$ be the bi-variate stochastic chain defined in the following way: for $t=1$, set $\tilde{X}_1=X_1\sim\pi_{\theta}$ and recursively for all $t\geq 2$ and $i\in [N]$, define 
\begin{equation*}
X_{i,t}={ 1}_{\big\{U_{i,t}\leq p_{\theta,i}(X_{t-1})\big\}} \ \text{and} \ \tilde{X}_{i,t}=\indiq_{\{t\not\in {\cal R}\}}{1}_{\big\{U_{i,t}\leq p_{\theta,i}(\tilde{X}_{t-1})\big\}}+\indiq_{\{t\in {\cal R}\}}Z_{i,t},   
\end{equation*}
where $p_{\theta,i}(x)$ is defined in  \eqref{def:transition_prob_2}. Then, the following properties hold:
\begin{enumerate}
    \item The chain $(X_t)_{t\geq 1}$ is a stationary version of Markov chain with transition probability defined in \eqref{def:transition_prob_1}, conditionally on $\theta$.
    \item The random vectors $(\tilde{X}_{t-2},\tilde{X}_{t-1})_{t\in {\cal R}}$ are independent.
    \item For all $t\in{\cal R}$, $(\tilde{X}_{t-2},\tilde{X}_{t-1})$ is distributed as $(X_{t-2},X_{t-1})$. Also, if $X_{t-2}=\tilde{X}_{t-2}$, then $X_{t-1}=\tilde{X}_{t-1}$.
    \item Denoting $\mathbb{Q}_{\theta}$ the joint distribution of $(X_{t}, \tilde{X}_t)_{t\geq 1}$ built as above, then for all $t\in {\cal R},$ 
    \begin{equation*}
\mathbb{Q}_{\theta}\big(X_{t-2}\neq \tilde{X}_{t-2}\big)\leq N(1-\lambda)^{\renewaltime -2}.   
\end{equation*}
\end{enumerate}
\end{lemma}

\begin{proof}
Items 1,2 and 3 follow from classical arguments of Markov chain theory. Hence, we only prove Item 4.

Throughout the proof, we write $\E_{\mathbb{Q}_{\theta}}$ to denote the expectation taken with respect to $\mathbb{Q}_{\theta}.$ 
Note that by construction, $p_{\theta,i}(X_{t-1})$ and $p_{\theta,i}(\tilde{X}_{t-1})$ are independent from $U_{t,j},j\in[n]$ for all $i\in[N]$ and $t\geq 2$, so that for all $i\in[N]$ and $t\not\in {\cal R},$
\begin{align}
\label{eq:1:proof:lemma:coupling}
 \E_{\mathbb{Q}_{\theta}}[|X_{i,t} - \tilde{X}_{i,t}|]&= \mathbb{Q}_{\theta}\{X_{i,t}\neq \tilde{X}_{i,t}\}\nonumber \\
 &= \mathbb{Q}_{\theta}\{U_{i,t}\mbox{ lies between }p_{\theta,i}(X_{t-1})\mbox{ and }p_{\theta,i}(\tilde{X}_{t-1})\}\nonumber\\
 &= \mathbb{E}_{\mathbb{Q}_{\theta}}[|p_{\theta,i}(X_{t-1}) - p_{\theta,i}(\tilde{X}_{t-1})|].   
\end{align}
Next using the definition of $p_{\theta,i}$, i.e. Equation \eqref{def:transition_prob_2},
we obtain that
\begin{align*}
|p_{\theta,i}(X_{t-1}) - p_{\theta,i}(\tilde{X}_{t-1})| & \leq (1-\lambda)N^{-1}\sum_{j=1}^{N}\theta_{ij}|X_{j,t-1}-\tilde{X}_{j,t-1}|.
\end{align*}
Taking the expectation in both sides of the above inequality and then using \eqref{eq:1:proof:lemma:coupling}, we deduce that for $t\not\in{\cal R}$ and $i\in [N]$,
\begin{align*}
\E_{\mathbb{Q}_{\theta}}[|X_{i,t} - \tilde{X}_{i,t}|]&\leq 
(1-\lambda)N^{-1}\sum_{j=1}^N\theta_{ij}\mathbb{E}_{\mathbb{Q}_{\theta}}[|X_{j,t-1}-\tilde{X}_{j,t-1}|]\\
&\leq 
(1-\lambda)\max_{j\in [N]}\mathbb{E}_{\mathbb{Q}_{\theta}}[|X_{j,t-1}-\tilde{X}_{j,t-1}|].
\end{align*}
In the second inequality above, we have also used that $N^{-1}\sum_{j=1}^N\theta_{ij}\leq 1$. Now, since $i\in[N]$ is arbitrary, it follows that for $t\not\in{\cal R}$,
\[
\max_{i\in[N]}\mathbb{E}_{\mathbb{Q}_{\theta}}[|X_{i,t} - \tilde{X}_{i,t}|]\leq (1-\lambda)\max_{j\in[N]}\mathbb{E}_{\mathbb{Q}_{\theta}}[|X_{t-1,j}-\tilde{X}_{t-1,j}|].
\]

Finally, take $t\in{\cal R}\setminus\{3\}$, we observe that $t-2-s\not\in{\cal R}$ for all $s=0,\ldots, \renewaltime -3$, so that applying the above inequality recursively yields
\begin{align*}
\max_{i\in[N]}\mathbb{E}_{\mathbb{Q}_{\theta}}[|X_{i,t-2} - \tilde{X}_{i,t-2}|]&\leq (1-\lambda)^{(\renewaltime -2)}\max_{i\in[N]}\mathbb{E}_{\mathbb{Q}_{\theta}}[|X_{i,t-2-(\renewaltime -3)} - \tilde{X}_{i,t-2-(\renewaltime -3)}|]\nonumber\\
&\leq (1-\lambda)^{(\renewaltime -2)}.    
\end{align*}
which, in turn, implies 
\begin{align*}
\mathbb{Q}_{\theta}\big(X_{t-2}\neq \tilde{X}_{t-2}\big)\leq \sum_{i=1}^N\mathbb{E}_{\mathbb{Q}_{\theta}}[|X_{i,t-2} - \tilde{X}_{i,t-2}|] \leq N (1-\lambda)^{\renewaltime -2}.    
\end{align*}
For $t=3$, we have $X_{t-2}=X_{1}=\tilde{X}_1=\tilde{X}_{t-2}$ by construction. Hence the result holds automatically also for $t=3$.
\end{proof} 

Once we have Lemma \ref{lemma:coupling} at our disposal, we are in position to prove Lemma \ref{lem:sigmatilde:minus:sigma:in:operator:norm}.

\begin{proof}[Proof of Lemma \ref{lem:sigmatilde:minus:sigma:in:operator:norm}] 
Let $n = \lfloor T/\renewaltime \rfloor$. For each $1\leq r\leq \renewaltime $, denote
$\mathbb{T}_{r}=\{r+a\renewaltime : a\in\{0,1,\ldots, n-1\}\}$ and define
 \[
M_r=\frac{1}{n}\sum_{t\in\mathbb{T}_r}(X_{t+1}-m^N)(X_{t}-m^N)^{\top}-\Sigma^{(1)}.
 \]
 Also, define
 \[
E=\begin{cases}
\frac{1}{T}\sum_{t=n\renewaltime +1}^{T} \left( (X_{t+1}-m^N)(X_{t}-m^N)^{\top}-\Sigma^{(1)} \right), \ \text{if} \ n\renewaltime <T,\\
0_{N\times N}, \ \text{if} \ n\renewaltime =T,
\end{cases}
 \]
 where $0_{N\times N}$ is the $N$-by-$N$ matrix with all entries equal to $0$.
 
With this notation, we can write
\[
\tilde{\Sigma}^{(1)}-\Sigma^{(1)}=\frac{n\renewaltime }{T}\Big(\frac{1}{\renewaltime }\sum_{r=1}^\renewaltime M_r\Big)+E.
\]
Since $n\renewaltime /T \leq 1$ we then deduce from the above identity that 
\begin{equation}
\label{eq1:proof:lem:sigmatilde:minus:sigma:in:operator:norm}\vvvert\tilde{\Sigma}^{(1)}-\Sigma^{(1)}\vvvert_2\leq \Big\vvvert \frac{1}{\renewaltime }\sum_{r=1}^\renewaltime M_r \Big\vvvert_2+\vvvert E\vvvert_2.
\end{equation}
To deal with the second term on the RHS of \eqref{eq1:proof:lem:sigmatilde:minus:sigma:in:operator:norm}, we first observe that for all $t\in\Z$,
\[
\vvvert (X_{t+1}-m^N)(X_{t}-m^N)^{\top}\vvvert_2\leq N \vvvert (X_{t+1}-m^N)(X_{t}-m^N)^{\top}\vvvert_{\max}\leq N,
\]
which also implies that $\vvvert\Sigma^{(1)}\vvvert_2\leq N$, and then combine these inequalities with the fact that $T-n\renewaltime \leq \renewaltime $ to obtain $\vvvert E\vvvert_2\leq 2N\renewaltime /T.$ 
To deal with the first term on the RHS of \eqref{eq1:proof:lem:sigmatilde:minus:sigma:in:operator:norm}, we proceed as follows. By the union bound and the stationarity of the chain conditionally on $\theta$, we have that for all $\zeta>0$ and $\theta\in\{0,1\}^{N\times N}$,
\[
\P_{\theta}\Big(\Big\vvvert \frac{1}{\renewaltime }\sum_{r=1}^\renewaltime M_r \Big\vvvert_2>\zeta\Big)\leq \sum_{r=1}^\renewaltime \P_{\theta}\Big(\vvvert M_r\vvvert_2>\zeta\Big)=\renewaltime \P_{\theta}\Big(\vvvert M_1\vvvert_2>\zeta\Big).
\]
Putting together \eqref{eq1:proof:lem:sigmatilde:minus:sigma:in:operator:norm} and the above upper bounds, we obtain for all $\theta\in\{0,1\}^{N\times N}$ and $\zeta>0$,  
\begin{equation}
\label{eq2:proof:lem:sigmatilde:minus:sigma:in:operator:norm}\P_{\theta
}\Big(\vvvert \tilde{\Sigma}^{(1)}-\Sigma^{(1)}\vvvert_2 >\frac{2N\renewaltime }{T}+\zeta\Big) \leq \P_{\theta
}\Big(\Big\vvvert \frac{1}{\renewaltime }\sum_{r=1}^\renewaltime M_r \Big\vvvert_2>\zeta\Big)\leq \renewaltime \P_{\theta}\Big(\vvvert M_1\vvvert_2>\zeta\Big).
\end{equation}

Let $(\tilde{X}_t)_{t\geq 1}$ the chain built in Lemma \ref{lemma:coupling} and define 
\[
\tilde{M}_{1}=n^{-1}\sum_{t\in \mathbb{T}_1}(\tilde{X}_{t+1}-m^N)(\tilde{X}_{t}-m^N)^{\top}-\Sigma^{(1)}.  
\]
By Items 1, 3 and 4 of Lemma  \ref{lemma:coupling}, we have that 
\begin{align}
\label{eq3:proof:lem:sigmatilde:minus:sigma:in:operator:norm}
\P_{\theta}(\vvvert M_1 \vvvert_2 >\zeta)&=\mathbb{Q}_{\theta}(\vvvert M_1 \vvvert_2 >\zeta)\nonumber\\
&\leq \mathbb{Q}_{\theta}(\vvvert \tilde{M}_1 \vvvert_2 >\zeta)+\mathbb{Q}_{\theta}\Big(\cup_{t\in \mathbb{T}_1}\{Y_{t}\neq X_{t}\}\Big)\nonumber\\
&\leq  \mathbb{Q}_{\theta}(\vvvert \tilde{M}_1 \vvvert_2 >\zeta)+nN(1-\lambda)^{\renewaltime -2}.
\end{align}
Items 2 and 3 of Lemma \ref{lemma:coupling} imply that the matrix $\tilde{M}_1$ is a sum of i.i.d random matrices. Therefore, we can apply matrix Bernstein inequality \cite[Theorem 6.1.1]{tropp2015introduction}, to deduce that
\[
\mathbb{Q}_{\theta}(\vvvert \tilde{M}_1 \vvvert_2 >\zeta)\leq 2N\exp\left\{-\frac{n\zeta^2}{2(\sigma_{\theta}+2N\zeta/3)}\right\}. 
\]
Combining \eqref{eq2:proof:lem:sigmatilde:minus:sigma:in:operator:norm}, \eqref{eq3:proof:lem:sigmatilde:minus:sigma:in:operator:norm} and the above inequality, the result follows.  
\end{proof}

\section{Minimax lower bounds}
\label{sec:proof:lower:bounds}

\subsection{Misclassification rate}
\label{sec:lower:bound:missclassification}
The goal of this section is to establish the minimax lower bound for the misclassification rate stated in Theorem \ref{thm:minmax:lower:bound}.

Throughout this section, we denote $N_+=\lceil Nr_+ \rceil$ and $N_-=N-N_+$. In what follows, we will assume that $1/2\leq r_+<1$. The analysis is similar for the case $0<r_+<1/2$.
For later use, let us observe that $N_+\geq N/2\geq N_-$ under the regime $1/2\leq r_+<1$. 

Recall that ${\cal S}=\{S\subset[N]: \ |S|= N_+\}$. In the sequel, each subset $S\in {\cal S}$ plays the role of the community $\mathcal{P}_+$.
Recall that for each $\theta\in\{0,1\}^{N\times N}$ and $S\in {\cal S}$, we write $\P_{\theta,S}$ to denote the probability measure under which the sequence of random vectors $X=(X_t)_{t\in\Z}$ is distributed as 
our model of interest, where the communities $({\cal P}_+,{\cal P}_-)=(S,S^c).$ In particular, for all $t\in\Z$ and $x,y\in\{0,1\}^N$, it holds that 
\begin{equation}
\label{def1:transition:prob:S:theta}
P_{\theta,S}(y|x):=\P_{\theta,S}(X_{t}=y|X_{t-1}=x)=\prod_{i=1}^N (p_{\theta,S,i}(x))^{y_i}(1-p_{\theta,S,i}(x))^{1-y_i},    
\end{equation}
where for each $i\in [N]$, 
\begin{equation}
\label{def2:transition:prob:S:theta}
p_{\theta,S,i}(x)=\mu+\frac{(1-\lambda)}{N}\Big(\sum_{j\in S}\theta_{ij}x_j+\sum_{j\in S^c}\theta_{ij}(1-x_j)\Big).    
\end{equation}

Moreover, let $\pi_{\theta,S}$ denote the unique stationary distribution of the sequence $X$ under $\P_{\theta,S}$, that is, the distribution such that $\pi_{\theta,S}(y)=\sum_{x\in\{0,1\}^N}\pi_{\theta,S}(x)P_{\theta,S}(y|x)$ for all $y\in\{0,1\}^N$, where $P_{\theta,S}(y|x)$ is the transition matrix defined in \eqref{def1:transition:prob:S:theta}.

Furthermore, let $\P_S$ denote the probability measure under which $\theta=(\theta_{ij})_{i,j\in [N]}$ is an i.i.d. sequence of random variables with distribution $\text{Ber}(p)$ and the sequence $X$ is distributed according $\P_{\theta,S}$ conditionally on $\theta$, i.e., $\P_S(X\in \cdot|\theta)=\P_{S,\theta}(X\in\cdot)$.   
Also, for $T\geq 1$, we denote $\P_{\theta,S}^{(1:T)}$ and $\P_{S}^{(1:T)}$ the law of $(X_1,\ldots, X_{T})$ under $\P_{\theta,S}$ and $\P_S$ respectively.

Finally, we note that $|S\setminus V|=|V\setminus S|$ for any $S,V\in\mathcal{S}$. To see this, first write $|S|=|S\setminus V|+|S\cap V|$ and $|V|=|V\setminus S|+|S\cap V|$ and then use the fact that $|V|=N_+=|S|.$ 

The key technical step of this section is to study the modifications of our model as $S$ varies in $\mathcal{S}$. These modifications are measured in terms of total variation distance $d_{TV}$ or Kullback-Leibler divergence.

Recall that for each $K>0$ and $u\geq 0$, we denote $\tau_K(u)=\sqrt{1-\exp(-Ku)}$.
The main step in deriving the lower bound stated in Theorem \ref{thm:minmax:lower:bound} is to prove the following result.

\begin{proposition} 
\label{prop:upper:bound:total:variation}
Let $T\geq 0$ be an integer. There exist positive constants $K_1$ and $K_2$ depending only on $\mu$ and $\lambda$ such that 
\[
d_{TV}\Big(\P_{S}^{(1:T+1)}, \P_{V}^{(1:T+1)}\Big)\leq \tau_{K_1}\Big(\frac{T}{N}\Big)+2\tau_{K_2}\Big(N^{-1} \Big\lceil\frac{\log(2N)}{\log((1-\lambda)^{-1})}\Big\rceil\Big).
\]
for all $S,V\in\mathcal{S}$ such that $|S\setminus V|=1$. 
\end{proposition}

To show Proposition  \ref{prop:upper:bound:total:variation}, we will use three auxiliary results which involve the Kullback-Leibler divergence denoted by $\operatorname{KL}(\cdot \,\|\, \cdot)$.

\begin{lemma} 
\label{lem:upper:bound:kulllback:leibler:divergence}
There exists a positive constant $K$ depending only $\mu$ and $\lambda$ such that 
\[
\operatorname{KL}\big(P_{\theta,S}(\cdot|x) \,\big\|\, P_{\theta,V}(\cdot|x)\big)\leq KN^{-1}.
\]
for all $x\in\{0,1\}^{N}$, $\theta\in\{0,1\}^{N\times N}$ and $S,V\in\mathcal{S}$ such that $|S\setminus V|=1$.
\end{lemma}
\begin{proof}
Fix $x\in\{0,1\}^N$, $\theta\in\{0,1\}^{N\times N}$ and $S,V\in\mathcal{S}$ such that $|S\setminus V|=1$.
Using that $P_{\theta,S}(\cdot|x)=\otimes_{i=1}^N\text{Ber}(p_{\theta,S,i}(x))$ and $P_{\theta,V}(\cdot|x)=\otimes_{i=1}^N\text{Ber}(p_{\theta,V,i}(x))$, we can check that
\begin{equation}
\label{eq1:proof:lem:upper:bound:kulllback:leibler:divergence}
\operatorname{KL}\big(P_{\theta,S}(\cdot|x) \,\big\|\, P_{\theta,V}(\cdot|x)\big)=\sum_{i=1}^N \operatorname{KL}\big(\text{Ber}(p_{\theta,S,i}(x)) \,\big\|\, \text{Ber}(p_{\theta,V,i}(x))\big). 
\end{equation}  
Next, using that for all $p,q\in (0,1)$,
\[
\operatorname{KL}\big(\text{Ber}(p) \,\big\|\, \text{Ber}(q)\big)\leq \frac{(p-q)^2}{q(1-q)},
\] 
and that, since $0<\mu\leq p_{\theta,S,i}(x) \leq 1-(\lambda-\mu)<1$ for all $\theta\in \{0,1\}^{N\times N}$, $S\in \mathcal{S}$ and $i\in [N]$,

\[
\min_{W\in \{S,V\}}\min_{i\in [N]}p_{\theta,W,i}(x)(1-p_{\theta,W,i}(x))\geq C:=\min\{\mu(1-\mu),(\lambda-\mu)(1-(\lambda-\mu))\}>0, 
\]
we obtain from \eqref{eq1:proof:lem:upper:bound:kulllback:leibler:divergence} that 
\begin{equation}
\label{eq2:proof:lem:upper:bound:kulllback:leibler:divergence}
\operatorname{KL}\big(P_{\theta,S}(\cdot|x) \,\big\|\,  P_{\theta,V}(\cdot|x)\big)\leq \frac{1}{C}\sum_{i=1}^N \big(p_{\theta,S,i}(x)-p_{\theta,V,i}(x)\big)^2. 
\end{equation}
Now, since $|V\setminus S|=|S\setminus V|=1$, we have that 
\begin{align}
\label{eq3:proof:lem:upper:bound:kulllback:leibler:divergence}
\Bigg|\sum_{j\in S}\theta_{ij}x_j-\sum_{j\in V}\theta_{ij}x_j\Bigg|&=  \Bigg|\sum_{j\in S\setminus V}\theta_{ij}x_j-\sum_{j\in V\setminus S}\theta_{ij}x_j\Bigg|\nonumber \\
&\leq \sum_{j\in S\setminus V}|\theta_{ij}x_j|+\sum_{j\in V\setminus S}|\theta_{ij}x_j|\leq |S\setminus V|+|V\setminus S|=2.
\end{align}
Similarly, we also have that 
\begin{equation}
\label{eq4:proof:lem:upper:bound:kulllback:leibler:divergence}
\Bigg|\sum_{j\in S^c}\theta_{ij}(1-x_j)-\sum_{j\in V^c}\theta_{ij}(1-x_j)\Bigg| \leq 2.
\end{equation}
Combining \eqref{def2:transition:prob:S:theta}, \eqref{eq3:proof:lem:upper:bound:kulllback:leibler:divergence} and \eqref{eq4:proof:lem:upper:bound:kulllback:leibler:divergence}, we then deduce that
\[
|p_{\theta,S,i}(x)-p_{\theta,V,i}(x)|\leq \frac{4(1-\lambda)}{N}.
\]
Finally, using the above inequality in  \eqref{eq2:proof:lem:upper:bound:kulllback:leibler:divergence}, we obtain that
\[
\operatorname{KL}\big(P_{\theta,S}(\cdot|x) \,\big\|\,  P_{\theta,V}(\cdot|x)\big)\leq \frac{1}{C}N \Big(16(1-\lambda)^2N^{-2}\Big) = \frac{16(1-\lambda)^2}{C} N^{-1}.
\]
\end{proof}

In the sequel, let us denote
\begin{equation*}
 \nu^{(t)}_{\theta,S|x}(y)=\P_{\theta,S}(X_t=y|X_0=x),   
\end{equation*}
for all $x, y\in\{0,1\}^N, S\in {\cal S}, t\geq 0$ and $\theta\in\{0,1\}^{N\times N}$. 
The second auxiliary result reads as follows.

\begin{lemma}
\label{lem:upper:bound:Dtv:and:kL:div:Xt:given:x:theta:S}
 There exists a constant $K>0$ depending only on $\mu$ and $\lambda$ such that 
 \begin{gather}
 d_{TV}\Big( \nu^{(t)}_{\theta,S|x} , \nu^{(t)}_{\theta,S|y} \Big)\leq N(1-\lambda)^t \indiq_{\{x\neq y\}} \label{ineq:dTV:P:theta:S:Xt:given:x:and:P:theta:S:Xt:given:y},\\
 \label{ineq:KL:P:theta:S:Xt:given:x:and:P:theta:V:Xt:given:x}
 \operatorname{KL}\Big( \nu^{(t)}_{\theta,S|x} \,\|\, \nu^{(t)}_{\theta,V|x} \Big)\leq K\frac{t}{N},
 \end{gather}
for all $\theta\in\{0,1\}^{N\times N}$, $x,y\in\{0,1\}^{N}$, $t\geq 0$ and $S,V\in \mathcal{S}$ such that $|S\setminus V|=1$.
\end{lemma}
\begin{proof}
We start proving \eqref{ineq:dTV:P:theta:S:Xt:given:x:and:P:theta:S:Xt:given:y} with a coupling argument. Take $\theta\in\{0,1\}^{N\times N}$, $S\in\mathcal{S}$ and $y,x\in\{0,1\}^{N}$. Next, consider $(U_{i,t})_{i\in[N],t\geq 1}$ a sequence of i.i.d random variables uniformly distributed on $[0,1]$ and let $(X_t,\widehat{X}_t)_{t\geq 0}$ be the bi-variate stochastic chain defined in the following way. For $t=0$, set $X_0=x$ and $\widehat{X}_0=y$, and recursively for all $t\geq 1$ and $i\in [N]$, define 
\begin{equation*}
X_{i,t}={ 1}_{\big\{U_{i,t}\leq p_{\theta,S,i}(X_{t-1})\big\}} \ \text{and} \ \widehat{X}_{i,t}={1}_{\big\{U_{i,t}\leq p_{\theta,S,i}(\widehat{X}_{t-1})\big\}},   
\end{equation*}
where $p_{\theta,S,i}(\cdot)$ is defined in \eqref{def2:transition:prob:S:theta}. 

Clearly, the chains $(X_t)_{t\geq 1}$ and $(\widehat{X}_t)_{t\geq 1}$ are Markov chains with transition probabilities defined in \eqref{def1:transition:prob:S:theta}, starting from $x$ and $y$ respectively.  
Moreover, denoting $\mathbb{Q}_{\theta,S,y,x}$ the joint distribution of $(X_t,\widehat{X}_t)_{t\geq 0}$ and proceeding similar as in the proof of Item 4 of Lemma \ref{lemma:coupling}, we can show that for all $t\geq 0$,
\begin{equation}
\label{proof:lem:upper:bound:Dtv:and:kL:div:Xt:given:x:theta:S}
\mathbb{Q}_{\theta,S,y,x}\Big(X_{t}\neq \widehat{X}_t\Big)\leq N(1-\lambda)^t \indiq_{\{x\neq y\}}.
\end{equation}
Now, since  
\[
d_{TV}\Big( \nu^{(t)}_{\theta,S|x} , \nu^{(t)}_{\theta,S|y} \Big)\leq \mathbb{Q}_{\theta,S,y,x}\Big(X_{t}\neq \widehat{X}_t\Big),  
\]
and $S\in\mathcal{S}$, $\theta$, $x$, $y$ and $t\geq 0$ are arbitrary, Inequality \eqref{ineq:dTV:P:theta:S:Xt:given:x:and:P:theta:S:Xt:given:y} follows from the above inequality and \eqref{proof:lem:upper:bound:Dtv:and:kL:div:Xt:given:x:theta:S}.

Hence, it only remains to prove \eqref{ineq:KL:P:theta:S:Xt:given:x:and:P:theta:V:Xt:given:x}. 
The key step to prove \eqref{ineq:KL:P:theta:S:Xt:given:x:and:P:theta:V:Xt:given:x} is to establish the following result.
\smallskip

{\bf Claim.} There exists $K>0$ depending only on $\lambda$ and $\mu$ such that 
\[
\operatorname{KL}\big(\nu^{(t+1)}_{\theta,S|x} \,\|\, \nu^{(t+1)}_{\theta,V|x}\big)\leq \operatorname{KL}\big(\nu^{(t)}_{\theta,S|x} \,\|\, \nu^{(t)}_{\theta,V|x}\big)+KN^{-1},
\]
for all $\theta\in\{0,1\}^{N\times N}$, $S,V\in \mathcal{S}$ such that $|S\setminus V|=1$, 
 $x\in\{0,1\}^{N}$ and $t\geq 0$.

\medskip
{\it Proof of the Claim}.
First, take $\theta\in\{0,1\}^{N\times N}$, $S,V\in \mathcal{S}$ such that $|S\setminus V|=1$, $x\in\{0,1\}^{N}$ and $t\geq 0$, and denote
\begin{equation*}
 \nu^{(t,t+1)}_{\theta,S|x}(y,z)=\P_{\theta,S}(X_t=y,X_{t+1}=z | X_0=x), \ y,z\in\{0,1\}^N.   
\end{equation*}
We can write  
\begin{align}
\nu^{(t,t+1)}_{\theta,S|x}(y,z)&=\nu^{(t)}_{\theta,S|x}(y)P_{\theta,S}(z|y)
\label{ineq2:proof:lem:upper:bound:Dtv:and:kL:div:Xt:given:x:theta:S}
\\
\label{ineq3:proof:lem:upper:bound:Dtv:and:kL:div:Xt:given:x:theta:S}
&=\nu^{(t+1)}_{\theta,S|x}(z)\P_{\theta,S}(X_{t}=y|X_{t+1}=z,X_0=x),
\end{align}
for all $y,z\in\{0,1\}^N$.

Next, using \eqref{ineq2:proof:lem:upper:bound:Dtv:and:kL:div:Xt:given:x:theta:S}, check that 
\begin{multline}
\label{ineq4:proof:lem:upper:bound:Dtv:and:kL:div:Xt:given:x:theta:S}
\operatorname{KL}\big(\nu^{(t,t+1)}_{\theta,S|x} \,\|\,  \nu^{(t,t+1)}_{\theta,V|x}\big) =\operatorname{KL}\big(\nu^{(t)}_{\theta,S|x} \,\|\, \nu^{(t)}_{\theta,V|x}\big) \\
+ \sum_{y\in\{0,1\}^N}\nu^{(t)}_{\theta,S|x}(y)\operatorname{KL}\big(P_{\theta,S}(\cdot|y) \,\|\,  P_{\theta,V}(\cdot|y)\big).    
\end{multline}
Then, using now \eqref{ineq3:proof:lem:upper:bound:Dtv:and:kL:div:Xt:given:x:theta:S}, check also that
\begin{multline}
\label{ineq5:proof:lem:upper:bound:Dtv:and:kL:div:Xt:given:x:theta:S}
\operatorname{KL}\big(\nu^{(t,t+1)}_{\theta,S|x} \,\|\,  \nu^{(t,t+1)}_{\theta,V|x}\big)
=\operatorname{KL}\big(\nu^{(t+1)}_{\theta,S|x} \,\|\,  \nu^{(t+1)}_{\theta,V|x}\big)+\\
\sum_{z\in\{0,1\}^N}\nu^{(t+1)}_{\theta,S|x}(z)\operatorname{KL}\big(\P_{\theta,S}(X_{t}=\cdot|X_{t+1}=z,X_0=x) \,\|\,  \P_{\theta,V}(X_{t}=\cdot|X_{t+1}=z,X_0=x)\big).    
\end{multline}
Combining \eqref{ineq4:proof:lem:upper:bound:Dtv:and:kL:div:Xt:given:x:theta:S}, \eqref{ineq5:proof:lem:upper:bound:Dtv:and:kL:div:Xt:given:x:theta:S} and using the positivity of the Kullback-Leibler divergence, it then follows that   
\begin{multline*}
\operatorname{KL}\big(\nu^{(t+1)}_{\theta,S|x} \,\|\,  \nu^{(t+1)}_{\theta,V|x}\big)\\
\leq \operatorname{KL}\big(\nu^{(t)}_{\theta,S|x} \,\|\,  \nu^{(t)}_{\theta,V|x}\big)+\sum_{y\in\{0,1\}^N}\nu^{(t)}_{\theta,S|x}(y)\operatorname{KL}\big(P_{\theta,S}(\cdot|y) \,\|\,  P_{\theta,V}(\cdot|y)\big).    
\end{multline*}
Applying Lemma \ref{lem:upper:bound:kulllback:leibler:divergence} to the above inequality and then using the fact that $\nu^{(t)}_{\theta,S|x}$ is a probability measure, we conclude the proof of the claim.

\smallskip
Finally, starting from $\nu^{(0)}_{\theta,S|x}=\nu^{(0)}_{\theta,V|x}$ (and thus $\operatorname{KL}\big(\nu^{(0)}_{\theta,S|x} \,\|\, \nu^{(0)}_{\theta,V|x}\big)=0$) and applying the Claim iteratively, we obtain \eqref{ineq:KL:P:theta:S:Xt:given:x:and:P:theta:V:Xt:given:x}.

\end{proof}

Combining Lemma \ref{lem:upper:bound:Dtv:and:kL:div:Xt:given:x:theta:S} with the theory of perturbation of Markov chains,  we can obtain an upper bound for $d_{TV}(\pi_{\theta,S} , \pi_{\theta,V})$. This is our last auxiliary result. 

\begin{lemma}
\label{lem:control:dtv:piStheta:piVtheta}
There exists a constant $K$ depending on $\mu$ and $\lambda$ such that 
\[
d_{TV}(\pi_{\theta,S} , \pi_{\theta,V})\leq 2\tau_{K}(N^{-1}\log(N)),
\]
for all $\theta\in\{0,1\}^{N\times N}$ and $S,V\in\mathcal{S}$ such that $|S\setminus V|=1$.
\end{lemma}
\begin{proof}
Let $t_N=\Big\lceil \log(2N)/\log((1-\lambda)^{-1})\Big\rceil$.
On one hand, Lemma \ref{lem:upper:bound:Dtv:and:kL:div:Xt:given:x:theta:S} yields
\[
d_{TV}\Big(\nu^{(t_N)}_{\theta,S|x},\nu^{(t_N)}_{\theta,S|y}\Big)\leq N(1-\lambda)^{t_N} \indiq_{\{x\neq y\}}\leq 2^{-1} \indiq_{\{x\neq y\}},
\]
for all $x\in\{0,1\}^N$, $\theta\in\{0,1\}^{N\times N}$ and $S\in {\cal S}$.

On the other hand, using Bretagnolle-Huber inequality and the fact that there exists some positive constant $C$ depending only on $\lambda$ such that $t_N \leq C \log(N)$ (since $N\geq 2$), we obtain that
\begin{equation*}
d_{TV}\Big(\nu^{(t_N)}_{\theta,S|x},\nu^{(t_N)}_{\theta,V|x}\Big) 
\leq \sqrt{1-\exp\big(-\operatorname{KL}(\nu^{(t_N)}_{\theta,S|x} \,\|\, \nu^{(t_N)}_{\theta,V|x})\big)} 
\leq \tau_{K}(N^{-1}\log(N)),
\end{equation*}
for all $x,y\in\{0,1\}^{N}$, $\theta\in\{0,1\}^{N\times N}$ and $S,V\in\mathcal{S}$ such that $|S\setminus V|=1$, where $K$ is a positive constant depending only on $\mu$ and $\lambda$.
Hence, applying Theorem 19.2.1 of \cite{rudolf2024perturbations} with $\alpha=1/2$, $\epsilon=\tau_{K}(N^{-1}\log(N))$, $Q(x,\cdot)=\nu^{(t_N)}_{\theta,S|x}(\cdot)$ and $K(x,\cdot)=\nu^{(t_N)}_{\theta,V|x}(\cdot)$, the result follows.
\end{proof}

Given Lemmas \ref{lem:upper:bound:kulllback:leibler:divergence} and \ref{lem:control:dtv:piStheta:piVtheta}, we are in position to prove Proposition
\ref{prop:upper:bound:total:variation}.

\begin{proof}[Proof of Proposition \ref{prop:upper:bound:total:variation}]
First, observe that 
\[
d_{TV}\Big(\P_{S}^{(1:T+1)} , \P_{V}^{(1:T+1)}\Big)\leq \max_{\theta\in\{0,1\}^{N\times N}}d_{TV}\Big(\P_{\theta,S}^{(1:T+1)} , \P_{\theta,V}^{(1:T+1)}\Big).
\]
For each $\theta\in\{0,1\}^{N\times N}$ and $S,V\in \mathcal{S}$, let us denote 
\[
\overline{\P}_{\theta,S,V}^{(1:T+1)}(x_1,\ldots, x_{T+1})=\pi_{\theta,S}(x_1)\prod_{t=2}^{T+1}P_{\theta,V}(x_t|x_{t-1}),
\]
for all $x_1,\ldots, x_{T+1}\in \{0,1\}^N$, where $P_{\theta,V}$ is the transition probability defined in \eqref{def1:transition:prob:S:theta} and $\pi_{\theta,S}$ is the stationary distribution associated with the transition probability $P_{\theta,S}$.
Observe that $\overline{\P}_{\theta,S,V}^{(1:T+1)}$ and $\P_{\theta,V}^{(1:T+1)}$ differ only from their initial distributions, namely, $\pi_{\theta,S}$ and $\pi_{\theta,V}$ respectively.  Using this fact, we can write     
\begin{align}
\label{eq1:proof:prop:upper:bound:total:variation}
d_{TV}\Big(\P_{\theta,S}^{(1:T+1)} , \P_{\theta,V}^{(1:T+1)}\Big)&\leq d_{TV}\Big(\P_{\theta,S}^{(1:T+1)} , \overline{\P}_{\theta,S,V}^{(1:T+1)}\Big)+d_{TV}\Big(\overline{\P}_{\theta,S,V}^{(1:T+1)} , \P_{\theta,V}^{(1:T+1)}\Big)\nonumber \\
& \leq d_{TV}\Big(\P_{\theta,S}^{(1:T+1)} , \overline{\P}_{\theta,S,V}^{1:(T+1)}\Big)+d_{TV}\big(\pi_{\theta,S} , \pi_{\theta,V}\big),
\end{align}
for all $\theta\in \{0,1\}^{N\times N}$.

Let us deal with the first term on the right-hand size of \eqref{eq1:proof:prop:upper:bound:total:variation}. To that end, we can use the stationarity of $(X_{t})_{t\in\Z}$ under $\P_{\theta,S}$ to show that  
\begin{align*}
\operatorname{KL}\Big(\P_{\theta,S}^{(1:T+1)}\|\overline{\P}_{\theta,S,V}^{(1:T+1)}\Big)&=\E_{\theta,S}\Bigg[\sum_{t=2}^{T+1}\log\Big(\frac{P_{\theta,S}(X_t|X_{t-1})}{P_{\theta,V}(X_t|X_{t-1})}\Big)\Bigg]\\
&=T\E_{\theta,S}\Bigg[\log\Big(\frac{P_{\theta,S}(X_1|X_{0})}{P_{\theta,V}(X_1|X_{0})}\Big)\Bigg]\\
&=T\E_{\theta,S}\Bigg[\operatorname{KL}\Big(P_{\theta,S}(\cdot|X_{0}) \,\|\, P_{\theta,V}(\cdot|X_{0})\Big)\Bigg].
\end{align*}
Combining the above identity with Lemma \ref{lem:upper:bound:kulllback:leibler:divergence} and then applying Bretagnolle-Huber inequality, we obtain that 
\[
d_{TV}(\P_{\theta,S}^{(1:T+1)} , \overline{\P}_{\theta,S,V}^{1:(T+1)})\leq \tau_K(TN^{-1}),
\]
for some constant $K$ depending only on $\mu$ and $\lambda$.

Plugging above inequality into \eqref{eq1:proof:prop:upper:bound:total:variation} and then applying Lemma \ref{lem:control:dtv:piStheta:piVtheta}, we conclude the proof.
\end{proof}

Finally, we prove Theorem \ref{thm:minmax:lower:bound}. In the proof, we will consider excitatory communities $S$ that belong to the following subset of ${\cal S}$:
\begin{equation}
\label{def:sub:class:of:excitatory:community}
\mathcal{S}^{\prime}=\{S'\cup\{2N_-+1,\ldots, N\}: S'\subset [2N_-] \ \text{with} \ |S'|=N_- \},
\end{equation}
with the convention that $\{2N_-+1,\ldots, N\}=\emptyset$ when $2N_-=N$. In words, it means that whatever $S\in \mathcal{S}^{\prime}$ is, the last $N_+ - N_-$ components are excitatory and $N_-$ components among the $2N_-$ first ones are chosen as excitatory. In particular, $|S| = N_+$, proving that $\mathcal{S}^{\prime}\subset {\cal S}$.

\begin{proof}[Proof of Theorem \ref{thm:minmax:lower:bound}]

We use the framework of \cite[Theorem 2.12]{Tsybakov2009Introduction} where the statistical model is parametrized by binary vectors $\omega$.
For each vector $\omega\in\{0,1\}^{N_-}$, let $S'_\omega$ be the subset of $[2N_-]$ defined as
\begin{equation}
\label{def:Swprime}
S'_\omega=\{i\in [N_-]:\omega_i=1\}\cup\{i+N_-\in [2N_-]\setminus [N_-]:\omega_i=0\}.
\end{equation}
Clearly, $|S'_\omega|=N_-$ and $S_\omega:=S'_\omega\cup\{2N_-+1,\ldots, N\}\in \mathcal{S}^{\prime}$ for each $\omega\in \{0,1\}^{N_-}.$
Next, observe that $|S_\omega^\prime\setminus S_\omega|=1$ for all $\omega,\omega^\prime\in\{0,1\}^{N_-}$ such that 
\[
d_{H}(\omega^\prime,\omega):=\sum_{i=1}^{N_-} \indiq_{\{\omega^\prime_i\neq \omega_i\}}=1.
\]
Hence, it follows from Proposition \ref{prop:upper:bound:total:variation} and \cite[Theorem 2.12, Item (ii)]{Tsybakov2009Introduction} that there exists positive constants $K_1$ and $K_2$ depending only on $\mu$ and $\lambda$ such that
\begin{equation}
\label{ineq1:proof:thm:minmax:lower:bound}
\inf_{\widehat{\omega}}\max_{\omega\in\{0,1\}^{N_-}}\E_{S_\omega}[d_{H}(\widehat{\omega},\omega)]\geq \frac{N_-}{2}\Big(1-\tau_{K_1}\Big(\frac{T}{N}\Big)+2\tau_{K_2}\Big(N^{-1} \log(N)\Big)\Big),    
\end{equation}
where the infimum is taken with respect to all random vectors $\widehat{\omega}$ taking values in $\{0,1\}^{N_-}$ which are measurable with respect $\sigma(X_1,\ldots, X_{T+1})$.     

One can use \eqref{ineq1:proof:thm:minmax:lower:bound} to conclude the proof. Indeed, take $(\widehat{\cal P}_+,\widehat{\cal P}_-)$ any random partition of $[N]$ which is measurable with respect to  $\sigma(X_1,\ldots, X_{T+1})$ and observe that 
\begin{align}
\max_{S\in{\cal S}}\E_{S}\Big[\operatorname{MR}(\widehat{\cal P}_+,\widehat{\cal P}_-)\Big]&\geq \max_{S\in{\cal S}^{\prime}}\E_{S}\Big[\operatorname{MR}(\widehat{\cal P}_+,\widehat{\cal P}_-)\Big]\nonumber \\
&\geq  \max_{\omega\in \{0,1\}^{N_-}}\E_{S_\omega}\Big[\operatorname{MR}(\widehat{\cal P}_+,\widehat{\cal P}_-)\Big]\nonumber \\
\label{ineq2:proof:thm:minmax:lower:bound}
&=N^{-1}\max_{\omega\in \{0,1\}^{N_-}}\E_{S_\omega}\Big[|\widehat{\cal P}_+\setminus S_\omega|+|S_\omega\setminus\widehat{\cal P}_+|\Big].
\end{align}
Next, defining $\widehat{\omega}\in\{0,1\}^{N_-}$ by $\widehat{\omega}_i = 1$ if and only if $i \in \widehat{\mathcal{P}}_+$, we have that $\widehat{\omega}$ is measurable with respect to $\sigma(X_1,\ldots, X_{T+1})$ and  
\begin{equation}
|\widehat{\cal P}_+\setminus S_\omega|+|S_\omega\setminus\widehat{\cal P}_+|\geq |S'_{\widehat{\omega}}\setminus S'_\omega|+|S'_\omega \setminus S'_{\widehat{\omega}}|=d_H(\widehat{\omega},\omega).
\label{ineq3:proof:thm:minmax:lower:bound}
\end{equation}
Combining \eqref{ineq1:proof:thm:minmax:lower:bound}, \eqref{ineq2:proof:thm:minmax:lower:bound} and \eqref{ineq3:proof:thm:minmax:lower:bound}, we deduce that    
\[
\max_{S\in{\cal S}}\E_{S}\Big[\operatorname{MR}(\widehat{\cal P}_+,\widehat{\cal P}_-)\Big]\geq \frac{N_-}{2N}\Big(1-\tau_{K_1}\Big(\frac{T}{N}\Big)+2\tau_{K_2}\Big(N^{-1} \log(N)\Big)\Big),
\]
for all random partition $(\widehat{\cal P}_+,\widehat{\cal P}_-)$ of $[N]$ which are $\sigma(X_1,\ldots, X_{T+1})$ measurable.
Finally, the result follows from the fact that $N_-\geq Nr_--1$.  
\end{proof}

\subsection{Exact recovery}
\label{sec:lower:bound:exact:recovery}
The goal of this section is to establish a minimax lower bound for the problem of exactly recovering
the communities ${\cal P}_+$ and ${\cal P}_-$.   
Here, we consider versions of our stochastic model starting from fixed initial conditions. Specifically, for each $x\in \{0,1\}^{N}$, $\theta\in\{0,1\}^{N\times N}$ and $S\subset [N]$ with $|S|=N_+$ (recall that $N_+=\lceil Nr_+ \rceil$), we denote $\P_{x,S,\theta}$ the probability measure under which the sequence of random vectors $(X_{t})_{t\geq 1}$ is a Markov chain on $\{0,1\}^{N}$ with transition probabilities given by \eqref{def1:transition:prob:S:theta} and \eqref{def2:transition:prob:S:theta}, starting from $X_1 = x$. 
Similarly as in the previous section, we also write $\P_{x,S}$ to denote the probability measure under which $\theta=(\theta_{ij})_{i,j\in [N]}$ is an i.i.d. sequence of random variables distributed as $\text{Ber}(p)$ and the sequence $(X_{t})_{t\geq 1}$ is distributed as $\P_{x,S,\theta}$ conditionally on $\theta$. Moreover, for each integer $T\geq 1$, we write $\P_{x,S,\theta}^{(1:T)}$ and $\P_{x,S}^{(1:T)}$ to denote the distribution of $(X_1,\ldots, X_{T})$ under $\P_{x,S,\theta}$ and $\P_{x,S}$ respectively. Finally, denote $r_{\min} = r_+\wedge r_-$ and $\phi(N)=\frac{\sqrt{N r_{\min}-1}}{1 + \sqrt{N r_{\min}-1}}$, for each $N\geq \lceil r_{\min}^{-1}\rceil$, and note that $\phi(N)\to 1$ as $N\to\infty.$

With this notation, we can prove the following minimax lower bound for exact recovery. 

\begin{proposition}
\label{prop:lower:bound:exact:recovery}
There exists a constant $K>0$ depending on $\mu$ and $\lambda$ such that for all $x\in\{0,1\}^{N}$, $N\geq \lceil r_{\min}^{-1}\rceil$ and $T\geq 1$, 
\begin{equation*}
    \inf_{(\widehat{\cal P}_+,\widehat{ \cal P}_-)} \max_{S \in \mathcal{S}}
    \P_{x,S}((\widehat{\cal P}_+,\widehat{\cal P}_-)\neq (S,S^c))\geq \phi(N)
    \Bigg(1 - \frac{1}{\log(Nr_{\rm min}-1)}\Big(\frac{2KT}{N}+\sqrt{\frac{KT}{N}}\Big)\Bigg),
\end{equation*}    
where $r_{\rm min}=r_+\wedge r_-$, ${\cal S}=\{S\subset [N]: |S|= \lceil Nr_+ \rceil\}$ and the infimum is taken over all random partitions $(\widehat{\cal P}_+,\widehat{\cal P}_-)$ of $[N]$ which are measurable with respect to $\sigma(X_1,\ldots, X_{T+1}).$     
\end{proposition}

Proposition \ref{prop:lower:bound:exact:recovery} implies that there exists $K$ such that, if $T=T_N$ and $N$ diverge but $T_N/(N\log(N)) \to 0$
\[
\liminf_{N\to\infty} \min_{(\widehat{\cal P}_+,\widehat{\cal P}_-)} \max_{S\in{\cal S}} \P_{x,S}((\widehat{\cal P}_+,\widehat{\cal P}_-)\neq (S,S^c)) = 1,
\]
showing that exact recovery is impossible in the minimax sense if $T = o(N\log(N))$. As we will see below, the proof of Proposition \ref{prop:lower:bound:exact:recovery} is similar to the proof of Theorem \ref{thm:minmax:lower:bound} except that Assouad's Lemma is replaced by \cite[Theorem 2.5]{Tsybakov2009Introduction}.

\begin{proof}

The proof is similar to that of Theorem \ref{thm:minmax:lower:bound}.  We only consider the case $r_-\leq r_+$. Recall that $N_{-}=N-N_+$.   
Also, for each vector $\omega\in\{0,1\}^{N_-}$, recall the definition of the set $S'_{\omega}$ given in \eqref{def:Swprime} and set $S_{\omega}:=S'_{\omega}\cup\{2N_-+1,\ldots, N\}$. As observed in the proof of Theorem \ref{thm:minmax:lower:bound}, the set $S_{\omega}$ belongs to the collection ${\cal S}^{\prime}$ defined in \eqref{def:sub:class:of:excitatory:community}, for each $\omega\in\{0,1\}^{N_-}$.

In the rest of the proof, let $S_0:= S_{\omega_0}$, where $\omega_0 = (0, \ldots, 0)\in \{0,1\}^{N_-}$ and $S_i:=S_{\omega_i}$ for $i\in [N_-]$, where $\omega_i\in \{0,1\}^{N_-}$ has value $1$ on its $i$-th coordinate and all the others are $0$. Clearly, $|S_0\setminus S_i|=1$ for all $i\in [N_-]$.
 
Fix $x\in \{0,1\}^{N_-}$. Combining the fact that $\P_{x,S}^{(1:T+1)}$ is a mixture of the probability measures $\P_{x,S,\theta}^{(1:T+1)}$ with the convexity of the Kullback-Leibler divergence \cite[Theorem 2.7.2]{Cover2006Elements}, we obtain, for all $i\in [N_-]$, that
\[
    \operatorname{KL}\left(\P_{x, S_0}^{(1:T+1)} \Big\| \P_{x, S_i}^{(1:T+1)}\right)
    \leq
    \max_{\theta \in [0,1]^{N \times N}}
   \operatorname{KL}\left(\P_{x, S_0, \theta}^{(1:T+1)}\Big\|\P_{x, S_i, \theta}^{(1:T+1)}\right).
\]

Next, combining the chain rule for the Kullback-Leibler divergence with Lemma \ref{lem:upper:bound:kulllback:leibler:divergence}, we have that
\[
    \operatorname{KL}\left(\P_{x, S_0, \theta}^{(1:T+1)} \,\Big\|\, \P_{x, S_{i},\theta}^{(1:T+1)}\right)
    \;\leq\; \frac{KT}{N},
\]
for all $i\in [N_-]$ and $\theta \in \{0,1\}^{N\times N}$, where $K>0$ depends only on $\mu$ and $\lambda$.
 
Therefore, applying \cite[Theorem 2.5]{Tsybakov2009Introduction} with $M = N_-$, $P_j=\P^{(1:T+1)}_{x,S_j}$ for $j\in \{0,1,\ldots, N_-\}$, $s=1/2$ and $\alpha = \dfrac{KT}{N \log(N_-)}$, we obtain that
\[
    \inf_{\widehat{\omega}} \max_{w \in \{0,1\}^{N_-}}
    \P_{x, S_w}\left(d_H(\widehat{w}, w)\geq 1/2\right)
    \geq
    \frac{\sqrt{N_-}}{1 + \sqrt{N_-}}
    \left(1 - \frac{2KT}{N\log(N_-)} - \sqrt{\frac{KT}{N\log^2(N_-)}}\right),
\]
where the infimum is taken with respect to all random vectors $\widehat{\omega}$ taking values in $\{0,1\}^{N_-}$ which are measurable with respect to $\sigma(X_1,\ldots, X_{T+1})$.

Now, take $(\widehat{\cal P}_+,\widehat{\cal P}_-)$ any random partition of $[N]$ which is measurable with respect to  $\sigma(X_1,\ldots, X_{T+1})$. Observing that $\{(\widehat{\cal P}_+,\widehat{\cal P}_-)\neq (S_w,S^c_w)\}=\{|\widehat{\cal P}_+\setminus S_\omega|+|S_\omega\setminus\widehat{\cal P}_+|\geq 1\}$ and that $\{d_{H}(\omega,\omega')\geq 1\}=\{d_{H}(\omega,\omega')\geq 1/2\}$, we obtain from \eqref{ineq3:proof:thm:minmax:lower:bound} that 
\begin{align*}
 \max_{S \in {\cal S}^{'}}
    \P_{x,S}\left((\widehat{\cal P}_+,\widehat{\cal P}_-)\neq (S,S^c)\right)& \geq  \max_{w \in \{0,1\}^{N_-}}
    \P_{x, S_w}\left((\widehat{\cal P}_+,\widehat{\cal P}_-)\neq (S_w,S^c_w)\right)\\
    &\geq
    \frac{\sqrt{N_-}}{1 + \sqrt{N_-}}
    \left(1 - \frac{2KT}{N\log(N_-)} - \sqrt{\frac{KT}{N\log^2(N_-)}}\right).   
\end{align*}
Using the fact that $N_-\geq Nr_--1$ in the above inequality, the result follows.
\end{proof}

\section{Covariance lemma}
\label{app:covariance:lemma}

For completeness, \cite[Lemma 3.8]{Chevallier2024inferring} is stated below.

\begin{lemma}
    \label{lem:covariance:produit}
    Let $z_k = (i_k,t_k) \in [N] \times \mathbb{N},$ for $k=1,\dots, 4$. Denote 
    $$B = \cov_\theta \left( X_{i_1, t_1} X_{i_2,t_2} \,,\, X_{i_3, t_3} X_{i_4,t_4} \right),$$ 
    and $E = \{z_k: k=1,\dots, 4\}$.  There exists a constant $K$ only depending on $\lambda$ such that:
    \begin{enumerate}
       \item If $ \# E = 1,$ then
        $
        |B|\leq 1.
        $
        \item If $ \# E = 2$, $z_1\neq z_2$ and $z_3\neq z_4$, then
        $ 
        |B|\leq 1.
        $
         \item If $ \# E = 2$, $z_1=z_2$ and $z_3=z_4$, then
        $$ 
        |B|\leq K(1-\lambda)^{|t_1 - t_3|}N^{-1}.
        $$
        \item If $ \# E = 3$ and $z_1=z_2$ or $z_3=z_4$, 
        then
        $$ 
        |B|\leq K (1-\lambda)^{|s_3-s_2|+|s_2-s_1|}N^{-2},
        $$
        where $s_1\leq s_2\leq s_3$ is an ordering of $\{t_1,t_2,t_3,t_4\}$.

        \item If $ \# E = 3$, $z_1\neq z_2$ and $z_3\neq z_4$, for instance assume that $z_1=z_3$ (and so $z_2\neq z_4$), then
        $$ 
        |B|\leq K (1-\lambda)^{|t_2-t_4|}N^{-1}.
        $$
        \item If $ \# E = 4 $ and $t_1 \wedge t_2 \geq  t_3\vee t_4$, then
        $$
        |B| \le K (1-\lambda)^{t_1 \vee t_2 - t_3 \wedge t_4}N^{-2} .
        $$
        \item If $ \# E = 4 $, then
$$
    |B| \le K (1-\lambda)^{|s_2 - s_1| + |s_3 - s_4|}N^{-2},
$$
where $s_1\leq \dots \leq s_4$ is an ordering of $\{t_1,\dots,t_4\}$.
       \end{enumerate}
\end{lemma}

\section{Remarks on $k$-means clustering in dimension 1}
\label{sec:remarks:on:kmeans:ckustering:1d}
In this section, we collect some remarks concerning $k$-means clustering in dimension 1 that we use in the proof of Corollary \ref{cor:extact_recovery}. In what follows, for any vector $v\in \R^N$, we denote $v_{(1)}\leq v_{(2)}\leq \ldots \leq v_{(N)}$ the order statistics associated to its coordinates.
Recall the definition of the vector $\widehat{\sigma}^{\rm ag}$ given in \eqref{def:hat:sigma:ag}. For each $S\subseteq [N]$, we denote  $\widehat{\mu}_S=|S|^{-1}\sum_{i\in S}\widehat{\sigma}^{\rm ag}_i$ and $r_{\rm min} = r_+ \wedge r_-$.

\begin{lemma}
\label{lem:structural:properties:kmeans}
Consider the vectors $\widehat{\sigma}^{\rm ag}$ and  $\overline{\sigma}^{\rm ag}$ defined in \eqref{def:hat:sigma:ag} and \eqref{def:sigmaN:and:sigma:aggregated} respectively. There are at most $N-1$ possible outcomes for $k$-means clustering applied to $\widehat{\sigma}^{\rm ag}$ with $k=2$. Moreover, these outcomes have the form $(S^{(k)}_+,S^{(k)}_-)$ where $S^{(k)}_-=\Big\{i\in [N]:\widehat{\sigma}^{\rm ag}_{(i)}\leq \widehat{\sigma}^{\rm ag}_{(k)}\Big\}$ and $S^{(k)}_+=[N]\setminus S^{(k)}_-$, for $1\leq k\leq N-1$. 

Furthermore, for all $N\geq \lceil 2/r_{\rm \min}\rceil$, the following inequalities hold simultaneously on the event $E=\{\|\widehat{\sigma}^{\rm ag}-\overline{\sigma}^{\rm ag}\|_{\infty}\leq \epsilon\}$ for all $\epsilon\leq c_1p r_{\rm min}/(1+r_{\rm min})$: 
\begin{enumerate}
    \item For all $1\leq k<|{\cal P}_-|$, 
\begin{equation}
\label{kmeans:decreasing:as:function:of:k}
\| \widehat{\sigma}^{\rm ag}-(\widehat{\mu}_{S^{(k)}_+}1_{{S^{(k)}_+}}+\widehat{\mu}_{S^{(k)}_-}1_{{S^{(k)}_-}})\|^2_2
>\| \widehat{\sigma}^{\rm ag}-(\widehat{\mu}_{S^{(k+1)}_+}1_{{S^{(k+1)}_+}}+\widehat{\mu}_{S^{(k+1)}_-}1_{{S^{(k+1)}_-}})\|^2_2.    
\end{equation}
\item For all $|{\cal P}_-|\leq k\leq N-1$, 
\begin{equation}
\label{kmeans:increasing:as:function:of:k}
\|\widehat{\sigma}^{\rm ag}-(\widehat{\mu}_{S^{(k)}_+}1_{{S^{(k)}_+}}+\widehat{\mu}_{S^{(k)}_-}1_{{S^{(k)}_-}})\|^2_2
<\| \widehat{\sigma}^{\rm ag}-(\widehat{\mu}_{S^{(k+1)}_+}1_{{S^{(k+1)}_+}}+\widehat{\mu}_{S^{(k+1)}_-}1_{{S^{(k+1)}_-}})\|^2_2.
\end{equation}
\end{enumerate}
\end{lemma}

\begin{proof}
It is well known that the partition of $k$-means is a Voronoi diagram. In our case, Voronoi cells are intervals (dimension equals one) and the optimal partition of $k$-means, denoted by $(S_+,S_-)$, is of the form $S_- = \{i\in [N]:\widehat{\sigma}^{\rm ag}_{i}\leq c \}$ and $S_+ = [N] \setminus S_-$ for some threshold value $c \in (\min_i \widehat{\sigma}^{\rm ag}_{i}, \max_i \widehat{\sigma}^{\rm ag}_{i})$. This collection of partitions corresponds to $\{(S^{(k)}_+,S^{(k)}_-): 1 \leq k \leq N-1\}$

Hence, it remains to prove \eqref{kmeans:decreasing:as:function:of:k} and \eqref{kmeans:increasing:as:function:of:k}. Since both proofs are similar, we only prove \eqref{kmeans:decreasing:as:function:of:k}. Therefore, in the rest of the proof, we fix $1\leq k<|{\cal P}_-|$.

Using that $\min_{c\in \R}\sum_{j\in S}(\widehat{\sigma}^{\rm ag}_j-c)^2=\sum_{j\in S}(\widehat{\sigma}^{\rm ag}_j-\widehat{\mu}_S)^2$ for any $S\subseteq [N]$ and that $\sum_{j\in S^{(k)}_+}(\widehat{\sigma}^{\rm ag}_j-c)^2-\sum_{j\in S^{(k+1)}_+}(\widehat{\sigma}^{\rm ag}_j-c)^2=(\widehat{\sigma}^{\rm ag}_{(k+1)}-c)^2$, for all $c\in \R$, one can check that 
\begin{equation}
\label{eq1:proof:lem:structural:properties:kmeans}
\sum_{j\in S^{(k)}_{+}}(\widehat{\sigma}^{\rm ag}_j-\widehat{\mu}_{S^{(k)}_+})^2-\sum_{j\in S^{(k+1)}_{+}}(\widehat{\sigma}^{\rm ag}_j-\widehat{\mu}_{S^{(k+1)}_+})^2\geq (\widehat{\sigma}^{\rm ag}_{(k+1)}-\widehat{\mu}_{S^{(k)}_+})^2.    \end{equation}
Arguing similarly, we can also check that 
\begin{equation}
\label{eq2:proof:lem:structural:properties:kmeans}
\sum_{j\in S^{(k+1)}_{-}}(\widehat{\sigma}^{\rm ag}_j-\widehat{\mu}_{S^{(k+1)}_-})^2-\sum_{j\in S^{(k)}_{-}}(\widehat{\sigma}^{\rm ag}_j-\widehat{\mu}_{S^{(k)}_-})^2\leq (\widehat{\sigma}^{\rm ag}_{(k+1)}-\widehat{\mu}_{S^{(k)}_-})^2.    \end{equation}
Combining \eqref{eq1:proof:lem:structural:properties:kmeans} and \eqref{eq2:proof:lem:structural:properties:kmeans}, we obtain that \eqref{kmeans:decreasing:as:function:of:k} holds, whenever
\begin{equation}
\label{eq3:proof:lem:structural:properties:kmeans}
|\widehat{\sigma}^{\rm ag}_{(k+1)}-\widehat{\mu}_{S^{(k)}_+}|>|\widehat{\sigma}^{\rm ag}_{(k+1)}-\widehat{\mu}_{S^{(k)}_-}|.   
\end{equation}

To prove that \eqref{eq3:proof:lem:structural:properties:kmeans} holds on the event $E$, we argue as follows. On the one hand, 
\begin{equation}
\label{eq4:proof:lem:structural:properties:kmeans}
|\widehat{\sigma}^{\rm ag}_{(k+1)}-\widehat{\mu}_{S^{(k)}_-}|=\frac{1}{|S^{(k)}_-|}\sum_{j=1}^{k}(\widehat{\sigma}^{\rm ag}_{(k+1)}-\widehat{\sigma}^{\rm ag}_{(j)})\leq \epsilon.
\end{equation}
On the other hand,
\begin{align*}
|\widehat{\sigma}^{\rm ag}_{(k+1)}-\widehat{\mu}_{S^{(k)}_+}|&=\frac{1}{N-k}\sum_{j=k+1}^N(\widehat{\sigma}^{\rm ag}_{(j)}-\widehat{\sigma}^{\rm ag}_{(k+1)})\nonumber \\
&\geq \frac{1}{N-k}\sum_{j=|{\cal P}_-|+1}^N(\widehat{\sigma}^{\rm ag}_{(j)}-\widehat{\sigma}^{\rm ag}_{(k+1)})\nonumber \\
&\geq  \frac{|{\cal P}_+|}{N-k}(\overline{\sigma}_+-\overline{\sigma}_--2\varepsilon)\geq \frac{|{\cal P}_+|}{N-1}(\overline{\sigma}_+-\overline{\sigma}_--2\varepsilon)>r^N_+(\overline{\sigma}_+-\overline{\sigma}_--2\varepsilon). 
\label{eq5:proof:lem:structural:properties:kmeans}
\end{align*}
Assuming that $N\geq N_0:=\lceil 2/(r_+\wedge r_-)\rceil$ so that $r_+^N\wedge r_-^N\geq (r_+\wedge r_-)/2$, it gives
\[
|\widehat{\sigma}^{\rm ag}_{(k+1)}-\widehat{\mu}_{S^{(k)}_+}|>\frac{(\overline{\sigma}_+-\overline{\sigma}_--2\varepsilon)r_{\rm min}}{2}.
\]
Finally, combining the above inequality with \eqref{eq4:proof:lem:structural:properties:kmeans}, we see that \eqref{eq3:proof:lem:structural:properties:kmeans} holds on the event $E$, for all $N\geq N_0$ as soon as $\varepsilon\leq \frac{(\overline{\sigma}_+-\overline{\sigma}_-)r_{\rm min}}{2(1+r_{\rm min})}=\frac{c_1pr_{\rm min}}{(1+r_{\rm min})}$.
\end{proof}

\pagebreak
\section{Notation tables}
\label{app:notation:tables}

\begin{table}[ht]
    \centering
    \label{tab:main:notation:scalars}
    \begin{tabular}{ll}
        \toprule
        \textbf{Notation} & \textbf{Description} \\
        \midrule
        $\mu$, $\lambda$, $p$, $r_{\pm}$ & Model parameters \\
        \midrule
        $m = \frac{\mu+(1-\lambda)pr_-}{1-(1-\lambda)p(r_+-r_-)}$ & Asymptotic spatio-temporal mean \\[10pt]
        $D = (1-\lambda)p(r_+-r_-)$ & Denominator of $m$ \\[5pt]
        $c_1 = (1-\lambda) m (1-m)$ & Signal strength normalized by the edge probability $p$ \\[5pt]
        $c_2 = \frac{(1-\lambda)^2 p^2 (r_+ - r_-)}{1-D^2} c_1$ & Bias factor \\[10pt]
        $\overline{\sigma}_{\pm} = c_2 \pm c_1p$ & Limits discriminating the communities \\
        \bottomrule
    \end{tabular}
    \caption{Main scalar notation: model parameters and derived scalar quantities.}
\end{table}

\begin{table}[ht]
    \centering
    \label{tab:main:notation:3versions}
    \begin{tabular}{lccc}
        \toprule
        \textbf{Notation} & \textbf{Empirical} & \textbf{Quenched} & \textbf{Annealed} \\
        \midrule
        \multicolumn{4}{l}{\textit{Environment}} \\
        Signed adjacency & - & $A^N$ & $\frac{p}{N} 1_N (1_{\mathcal{P}_+}-1_{\mathcal{P}_-})^\top$ \\
        Row-wise sums & - & $L^N$ & $p(r_+^N - r_-^N)1_N$ \\
        Column-wise sums & - & $C^N$ & $p(1_{\mathcal{P}_+}-1_{\mathcal{P}_-})$ \\
        \midrule
        \multicolumn{4}{l}{\textit{Covariance matrices}} \\
        1-lagged covariance & $\widehat{\Sigma}^{(1)}$ & $\Sigma^{(1)}$ & $\overline{\Sigma}^{(1)}$ \\
        0-lagged covariance & $\widehat{\Sigma}^{(0)}$ & $\Sigma^{(0)}$ & - \\
        \midrule
        \multicolumn{4}{l}{\textit{Key vectors}} \\
        Aggregated & $\widehat{\sigma}^{\rm ag}$ & $\sigma^{\rm ag}$ & $\overline{\sigma}^{\rm ag}$ \\
        Spectral & $\widehat{\sigma}^{\rm sp}$ & - & $\overline{\sigma}^{\rm sp}$ \\
        \bottomrule
    \end{tabular}
    \caption{Main matrix notation: empirical, quenched, and annealed versions. The terms \emph{quenched} and \emph{annealed} come from the theory of stochastic processes in random environment: quenched refers to quantities computed conditionally on a fixed realization of the random environment, while annealed refers to quantities where the random environment has been averaged out.}
\end{table}

\section{Supplement to the simulation study}
\label{app:plots}

Figures \ref{fig:parameters:vary:threshold} and \ref{fig:parameters:vary:kmeans} give an overview of the performance of the aggregated method combined with either mean threshold or $k$-means clustering as one of the parameters ($N$, $r_+$, $\beta$, $\lambda$ or $p$) varies. Except the top right panel ($r_+$ varies), ag\_threshold and ag\_kmeans show similar performances. The following remarks apply to both Figures. 

In this framework and for each time horizon $T$ fixed, the parameter $\beta$ seems to have no influence on the performance. 

As $\lambda$ increases, the performance deteriorates as expected because the strength of interaction in the system decreases so that the sample contains less and less information on the underlying graph. Mathematically, this appears in the facts that: 1) $\overline{\sigma}_+ - \overline{\sigma}_- = 2c_1 p$ vanishes as $\lambda \to 1$; 2) the constants in our structural results (e.g. Theorem \ref{thm:concentration:Sigma}) diverge as $\lambda \to 1$. 

Despite the fact that these constants also diverge when $\lambda \to 0$ (see Remark \ref{rem:constants}), our method performs well for small values of $\lambda$. 

Moreover, as $p$ increases, the performance improves as expected because the strength of interaction in the system increases. This is reflected in the fact that $\overline{\sigma}_+ - \overline{\sigma}_- = 2c_1 p$ is an increasing function of $p$. 

Finally, by comparing the top right panel of Figures \ref{fig:parameters:vary:threshold} and \ref{fig:parameters:vary:kmeans}, it is clear that the mean threshold clustering is not robust to unbalanced communities. That is the main reason why we recommend to use the ag\_kmeans method.

\begin{figure}[ht]
    \includegraphics[width=.49\textwidth]{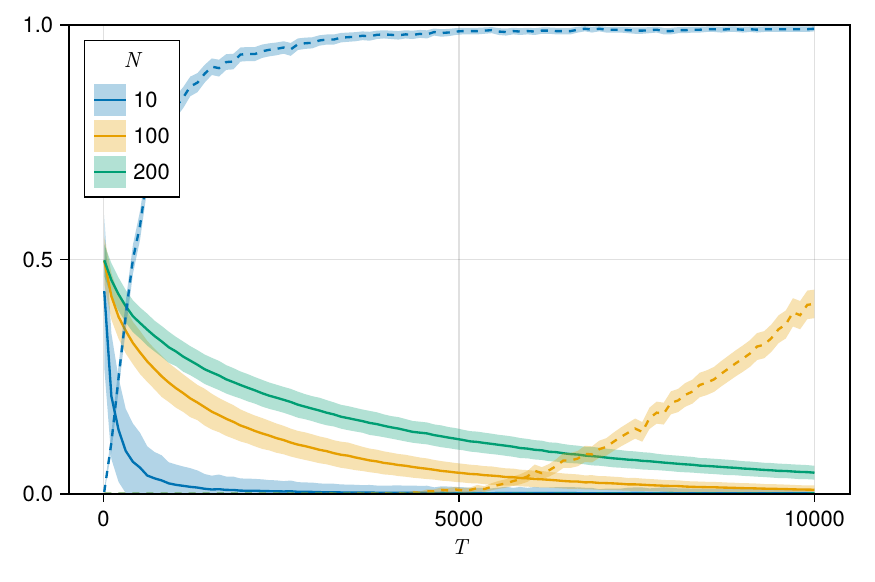}%
    \includegraphics[width=.49\textwidth]{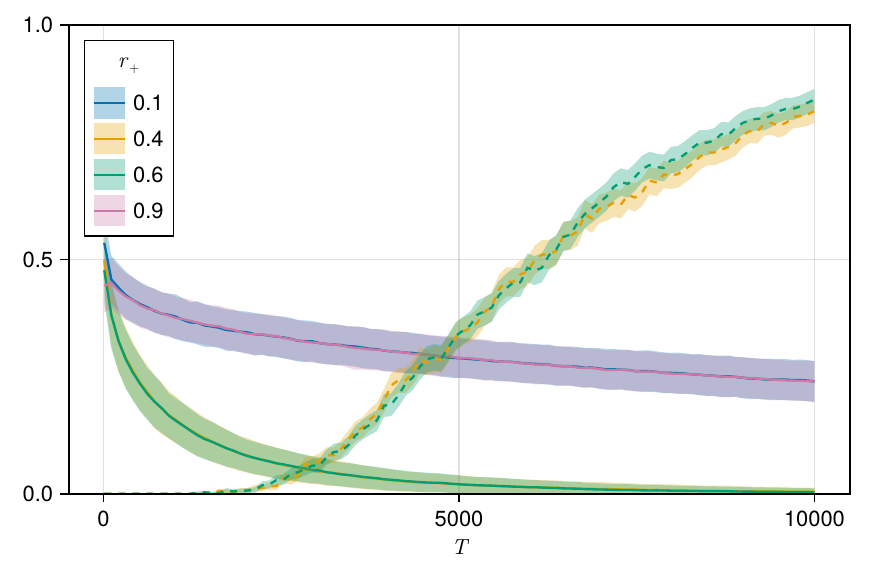}
    \includegraphics[width=.49\textwidth]{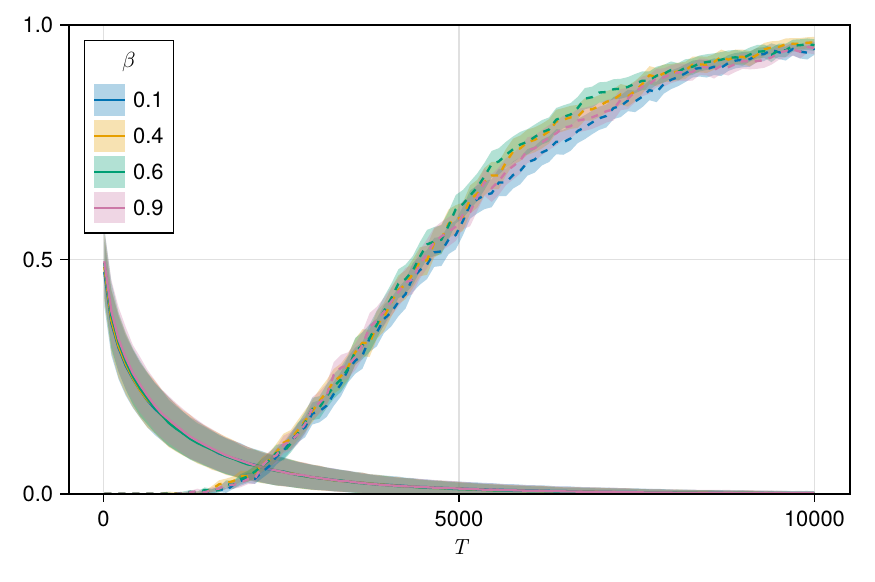}%
    \includegraphics[width=.49\textwidth]{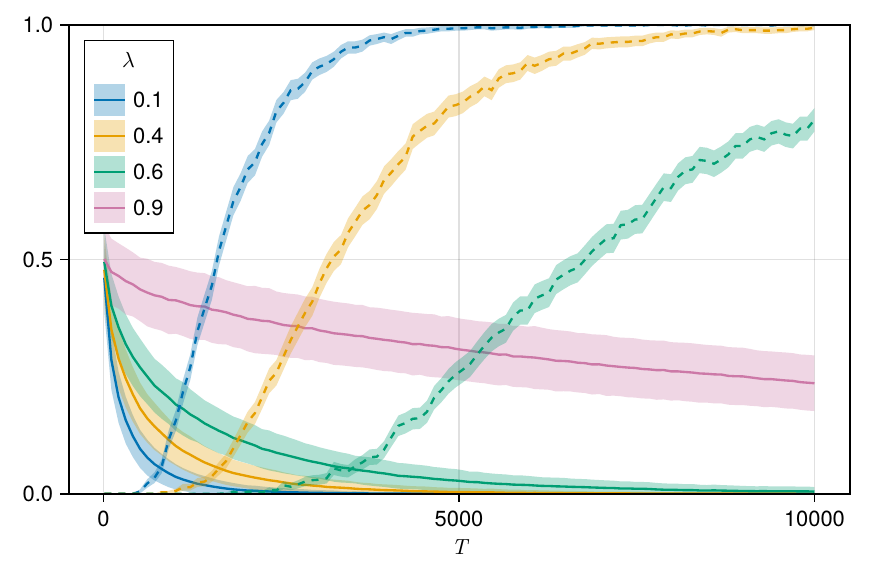}
    \includegraphics[width=.49\textwidth]{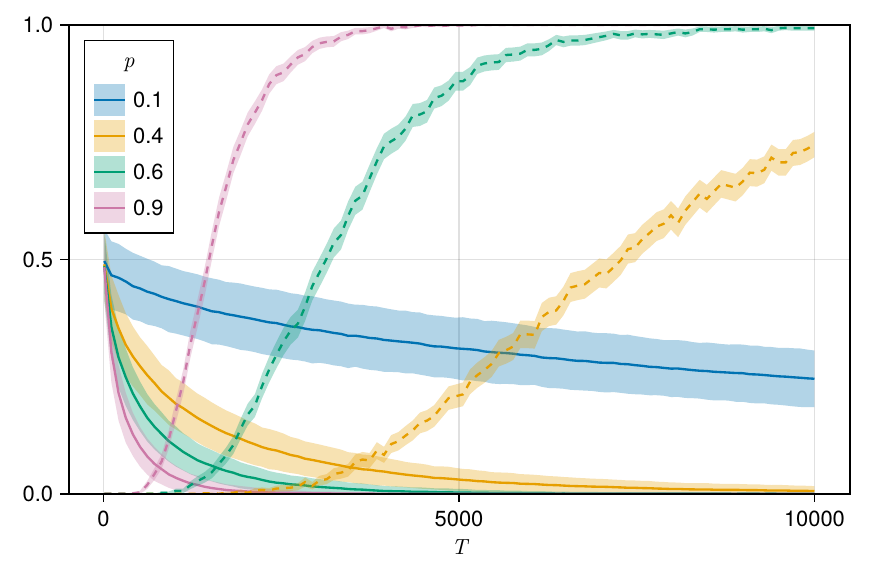}%
    \caption{\label{fig:parameters:vary:threshold}
    Estimated PER and MMR as function of $T$ for the ag\_threshold method. The increasing dashed lines correspond to the PER (the ribbon corresponds to the 95\% confidence interval). The decreasing solid lines correspond to the MMR (the width of the ribbon is equal to the standard deviation). The panels correspond to different choices of varying parameter (the non-varying parameters are chosen according to the default values given at the beginning of Section \ref{sec:simulation}). The values of the varying parameter are given by the color legends.
    }
\end{figure}

\begin{figure}[ht]
    \includegraphics[width=.49\textwidth]{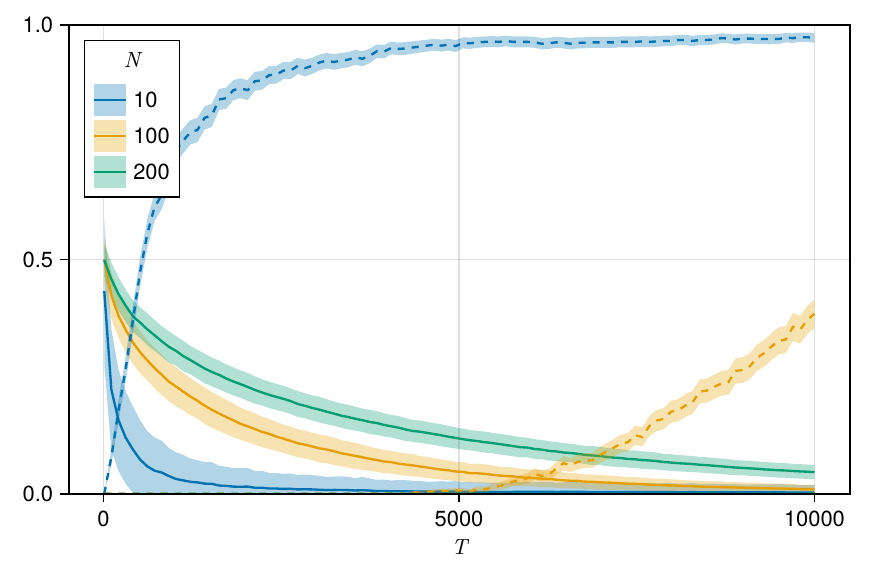}%
    \includegraphics[width=.49\textwidth]{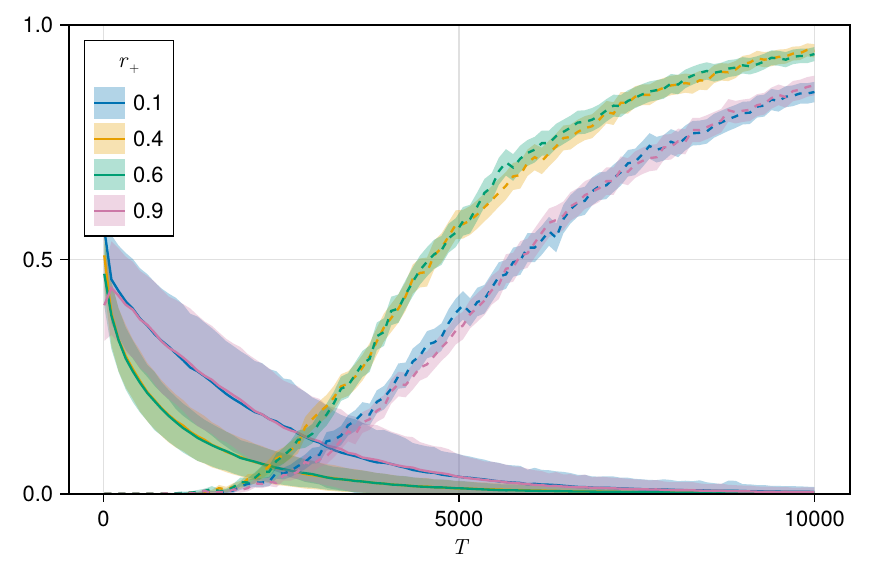}
    \includegraphics[width=.49\textwidth]{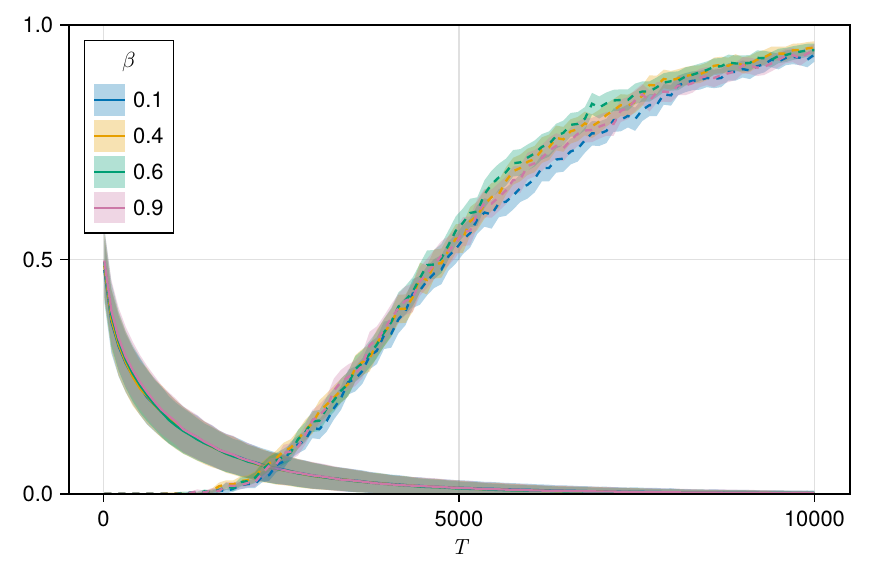}%
    \includegraphics[width=.49\textwidth]{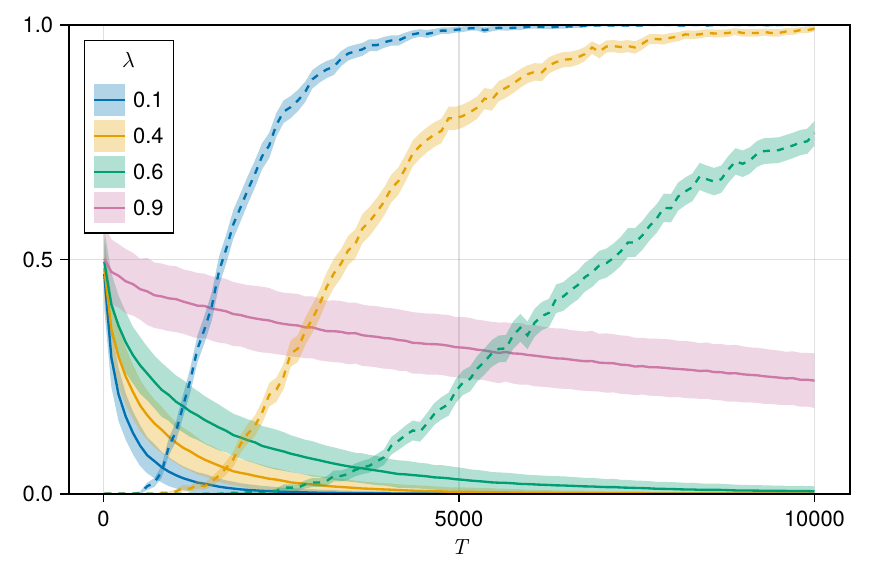}
    \includegraphics[width=.49\textwidth]{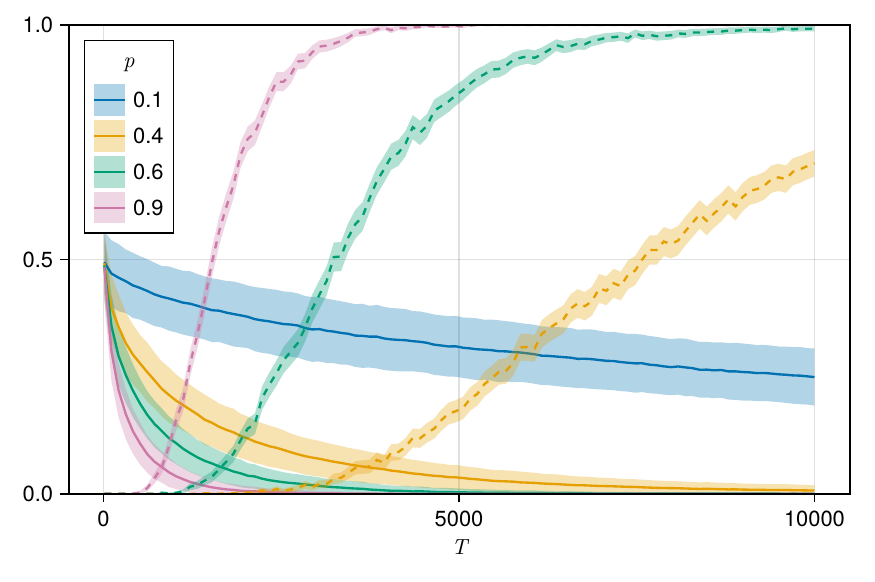}%
    \caption{\label{fig:parameters:vary:kmeans}
    Similar to Figure \ref{fig:parameters:vary:threshold} except that it corresponds to the ag\_kmeans method.
    }
\end{figure}

\end{document}